\pgfplotsset{
	compat=newest, 
	cycle list name=exotic }
\newtheorem{theorem}{Theorem}[section]
\newtheorem{proposition}{Proposition}[section]
\newtheorem{lemma}{Lemma}[section]
\newtheorem{corollary}{Corollary}[section]
\begin{document}
	
	\title{Inadmissibility of the corrected Akaike information criterion}
	
	\author{Takeru Matsuda\thanks{Department of Mathematical Informatics, Graduate School of Information Science and Technology, The University of Tokyo \& Statistical Mathematics Unit, RIKEN Center for Brain Science, e-mail: \texttt{matsuda@mist.i.u-tokyo.ac.jp}}}
	
	\date{}
	
	\maketitle
	
	\begin{abstract}
		For the multivariate linear regression model with unknown covariance, the corrected Akaike information criterion is the minimum variance unbiased estimator of the expected Kullback--Leibler discrepancy.
		In this study, based on the loss estimation framework, we show its inadmissibility as an estimator of the Kullback--Leibler discrepancy itself, instead of the expected Kullback--Leibler discrepancy.
		We provide improved estimators of the Kullback--Leibler discrepancy that work well in reduced-rank situations and examine their performance numerically.
	\end{abstract}

\section{Introduction}
We consider the multivariate linear regression model with $p$ explanatory variables and $q$ response variables:
\begin{align}
	y_i = B^{\top} x_i + \varepsilon_i, \quad \varepsilon_i \sim {\rm N}_q (0,\Sigma), \label{fullmodel}
\end{align}
%\begin{align}
%	y_i = B^{\top} x_i + \varepsilon_i, \quad \varepsilon_i \sim {\rm N}_q (0,\Sigma), \quad i=1,\dots,n,
%\end{align}
for $i=1,\dots,n$, where $n \geq p$, $B \in \mathbb{R}^{p \times q}$ is an unknown regression coefficient matrix, $\Sigma  \in \mathbb{R}^{q \times q}$ is an unknown covariance matrix (positive definite) and $\varepsilon_1,\dots,\varepsilon_n$ are independent.
In the following, the probability density function of $Y=(y_1, y_2, \dots, y_n)^{\top} \in \mathbb{R}^{n \times q}$ is denoted by $p(Y \mid B, \Sigma)$ and the expectation of $f(Y)$ under $p(Y \mid B, \Sigma)$ is written as ${\rm E}_{B,\Sigma}[ f(Y) ]$.

The maximum likelihood estimate for the model \eqref{fullmodel} is given by
\begin{align*}
	\hat{B}=(X^{\top} X)^{-1} X^{\top} Y, \quad \hat{\Sigma}=\frac{1}{n} (Y-X \hat{B})^{\top} (Y-X \hat{B}).
\end{align*}
%Thus, the Akaike Information Criterion (AIC) \cite{Akaike} is obtained as
%\begin{align}
%	{\rm AIC} &= -2 \log p(Y \mid \hat{B}, \hat{\Sigma}) + 2 \left( pq + \frac{q(q+1)}{2} \right) \nonumber \\
%	&=  nq \log(2 \pi e) + n \log \det \hat{\Sigma} + 2 \left( pq+ \frac{q(q+1)}{2} \right). \label{AIC}
%\end{align}
The Akaike Information Criterion \citep[AIC;][]{Akaike}  is an approximately unbiased estimator of the expected Kullback--Leibler discrepancy:
\begin{align*}
	{\rm E}_{B,\Sigma} [ {\rm AIC} ] = {\rm E}_{B,\Sigma} [ d ( (B,\Sigma), (\hat{B} ,\hat{\Sigma}) ) ] + o(1)
	%	{\rm E}_{B,\Sigma} [ {\rm AIC} ] \approx {\rm E}_{B,\Sigma} [ d ( (B,\Sigma), (\hat{B} ,\hat{\Sigma}) ) ],
\end{align*}
as $n \to \infty$, where 
\begin{align}
	d ( (B,\Sigma), (\hat{B},\hat{\Sigma}) ) &= -2 \int p(\widetilde{Y} \mid B, \Sigma) \log p (\widetilde{Y} \mid \hat{B}, \hat{\Sigma}) {\rm d} \widetilde{Y} \label{Delta} \\
	& = n q \log (2 \pi) + n \log \det \hat{\Sigma} + n {\rm tr} (\hat{\Sigma}^{-1} \Sigma) + {\rm tr} (\hat{\Sigma}^{-1} (\hat{B}-B)^{\top} X^{\top} X (\hat{B}-B)) \nonumber
\end{align}
is called the Kullback--Leibler discrepancy from $p(\widetilde{Y} \mid B,\Sigma)$ to $p( \widetilde{Y} \mid \hat{B},\hat{\Sigma})$.
% which is equivalent to the Kullback--Leibler divergence from $p(\widetilde{Y} \mid B,\Sigma)$ to $p( \widetilde{Y} \mid \hat{B},\hat{\Sigma})$ up to an additive constant.
%Thus, ${\rm AIC}({\gamma})$ in \eqref{AIC} is an approximately unbiased estimator of the expected Kullback--Leibler discrepancy ${\rm E}_{\beta,\sigma^2} ( \Delta[ \{ \beta,\sigma^2 \} ,\{ \hat{\beta}_{\gamma} (y), \hat{\sigma}_{\gamma}^2 (y) \} ] )$. 
%Note that the Kullback--Leibler divergence from $p(\widetilde{Y} \mid B,\Sigma)$ to $p( \widetilde{Y} \mid \hat{B},\hat{\Sigma})$ is given by
%\begin{align*}
%	D ( p(\widetilde{Y} \mid B,\Sigma), p(\widetilde{Y} \mid \hat{B},\hat{\Sigma}) ) &= \int p(\widetilde{Y} \mid B, \Sigma) \log \frac{p(\widetilde{Y} \mid B, \Sigma)}{p (\widetilde{Y} \mid \hat{B}, \hat{\Sigma})} {\rm d} \widetilde{Y} \nonumber \\
%	& = \frac{1}{2}\Delta ( (B,\Sigma), (\hat{B},\hat{\Sigma}) ) + {\rm E}_{B,\Sigma} [ \log p(\widetilde{Y} \mid B,\Sigma) ].
%\end{align*}
%Namely, the Kullback--Leibler discrepancy is equivalent to the Kullback--Leibler divergence up to additive constant.
%Thus, a model with smaller Kullback--Leibler discrepancy is closer to the true distribution in terms of the Kullback--Leibler divergence.
%the corrected AIC provides an unbiased estimator of the Kullback--Leibler risk ${\rm E}_{\beta,\sigma^2} [ D \{ p(\widetilde{y} \mid \beta,\sigma^2), p(\widetilde{y} \mid \hat{\beta},\hat{\sigma}^2)\} ]$.
The AIC is widely used for evaluation and selection of linear regression models \citep{Burnham,Konishi08}.

%Whereas AIC often works well in practice, its bias becomes non-negligible when the sample size $n$ is not sufficiently large. %compared to the dimension $p$.
The bias of the AIC is non-negligible when the sample size $n$ is not sufficiently large. %compared to the dimension $p$.
Thus, a corrected AIC (AICc) has been derived \citep{Sugiura,Hurvich,Bedrick},
%Thus, Sugiura \cite{Sugiura} derived an exactly unbiased version of the AIC:
%\begin{align}
%	%	{\rm AICc}({\gamma}) = n \{ 1+\log (2 \pi \hat{\sigma}_{\gamma}^2) \} + 2(\lvert \gamma \rvert+1) \frac{n}{n-\lvert \gamma \rvert-2}, \label{AICc}
%	{\rm AICc} = nq \log (2 \pi e) + n \log \det \hat{\Sigma} + \frac{2n}{n-p-q-1} \left( pq + \frac{q(q+1)}{2} \right), \label{AICc}
%\end{align}
which is exactly unbiased:
\begin{align*}
	{\rm E}_{B,\Sigma} [ {\rm AICc} ] = {\rm E}_{B,\Sigma} [ d ( (B,\Sigma), (\hat{B} ,\hat{\Sigma}) ) ].
\end{align*}
\cite{Cavanaugh} provided a unified derivation of AIC and AICc and \cite{Davies} showed that AICc is the minimum variance unbiased estimator of the expected Kullback--Leibler discrepancy. 

Both AIC and AICc were developed to unbiasedly estimate the expected Kullback--Leibler discrepancy.
This idea dates back to Stein's unbiased risk estimate \citep[SURE;][]{Stein74}, which unbiasedly estimates the quadratic risk of estimators of a normal mean \citep{Lehmann,shr_book}.
In this context, \cite{Johnstone} considered estimation of the quadratic loss itself, instead of its average (quadratic risk).
Although SURE is still unbiased for this problem, \cite{Johnstone} showed that it can be improved in terms of the mean squared error.
In other words, SURE is inadmissible as an estimator of the quadratic loss.
See Section~\ref{sec:loss} for details.
This finding led to the development of a field called loss estimation \citep{Fourdrinier}.

	In this study, we examine AIC and AICc from the loss estimation viewpoint and investigate their admissibility as estimators of the Kullback--Leibler discrepancy $d((B, \Sigma), (\hat{B}, \hat{\Sigma}))$, instead of its average ${\rm E}_{B,\Sigma} [d((B, \Sigma), (\hat{B}, \hat{\Sigma}))]$ (expected Kullback--Leibler discrepancy).
	The former estimand is random (depends on $Y$ as well as $(B,\Sigma)$) whereas the latter one is non-random (depends only on $(B,\Sigma)$). 
	In this sense, it may be more correct to refer to the current problem as prediction rather than estimation\footnote{Similarly, \cite{Lehmann} states that it is common to speak of prediction, rather  than estimation, of random effects (Example~3.5.5). See also \cite{Sandved}}.
	The current setting to estimate (or predict) $d((B, \Sigma), (\hat{B}, \hat{\Sigma}))$ is considered to reflect the practical usage of AIC (and AICc) as a model evaluation criterion more faithfully as follows.
	Given data at hand, we estimate $(B,\Sigma)$ by $(\hat{B},\hat{\Sigma})$ and construct the plug-in predictive distribution $p (\widetilde{Y} \mid \hat{B}, \hat{\Sigma})$ for a future observation.
	The disparity between this predictive distribution and the true data-generating distribution $p (\widetilde{Y} \mid {B}, {\Sigma})$ is given by the Kullback--Leibler discrepancy $d ( (B,\Sigma), (\hat{B},\hat{\Sigma}) )$.
	Whereas the usual argument on AIC (and AICc) considers estimation of the average of $d ( (B,\Sigma), (\hat{B},\hat{\Sigma}) )$ over the possible realizations of $Y$ (expected Kullback--Leibler discrepancy), here we focus on estimation (or prediction) of $d ( (B,\Sigma), (\hat{B},\hat{\Sigma}) )$ itself for the specific realization of $Y$ at hand.
	Thus, the current setting provides direct (conditional) assessment of the performance of the predictive distribution obtained from the data at hand.
	%Note that such a conditional viewpoint was already adopted in the early work on test power estimation by \cite{LehmannScheffe}.
	Note that \cite{Matsuda16} studied the Pitman closeness property of predictive distributions in a similar spirit.
	We develop improved estimators of $d ( (B,\Sigma), (\hat{B},\hat{\Sigma}) )$ and show that they attain better variable selection result than AIC and AICc in simulation.
	It demonstrates a practical advantage of introducing the current setting.
	See \cite{Fourdrinier} for further discussion on motivation for considering loss estimation.
	%It implies that the loss estimation viewpoint is useful for evaluating and developing model selection criteria.
	%{\rm E}_{B,\Sigma} [ d ( (B,\Sigma), (\hat{B} ,\hat{\Sigma}) ) ]

This paper is organized as follows.
In Section~\ref{sec:loss}, we briefly review the loss estimation framework and existing results for normal mean vector and matrix.
We also derive an improved loss estimator for a normal mean matrix, which will be the basis of the main results of this paper.
In Section~\ref{sec:ic}, we introduce the general setting of loss estimation for a predictive distribution and study the properties of AIC and AICc as loss estimators in multivariate linear regression.
For the multivariate linear regression model \eqref{fullmodel} with known covariance, AIC is shown to be inadmissible and an improved loss estimator is given.
For the multivariate linear regression model \eqref{fullmodel} with unknown covariance, AIC is shown to be inadmissible and dominated by AICc.
Then, in Section~\ref{sec:main}, we prove that AICc is still inadmissible and provide improved loss estimators that work well in reduced-rank situations.
In Section~\ref{sec:experiments}, we present numerical results to examine the performance of the improved estimators.
The results demonstrate that the improved estimators often outperform the corrected AIC in variable selection.
Finally, we provide concluding remarks in Section~\ref{sec:concl}.
Technical lemmas are given with proofs in the Appendix.

\section{Loss estimation framework}\label{sec:loss}

\subsection{General setting}
Here, we briefly introduce the loss estimation framework.
See \cite{Fourdrinier,shr_book} for a comprehensive review of loss estimation.
Recently, the idea of loss estimation has been applied to high-dimensional inference \citep{Bellec}.

Suppose that we have an observation $Y \sim p(y \mid \theta)$, where $\theta$ is an unknown parameter.
In usual setting of point estimation \citep{Lehmann}, we consider estimation of $\theta$ using an estimator $\hat{\theta}=\hat{\theta}(y)$.
The discrepancy of an estimate $\hat{\theta}$ from the true value $\theta$ is quantified by a loss function $L(\theta,\hat{\theta})$.
Then, estimators are compared by using the risk function $R(\theta,\hat{\theta})={\rm E}_{\theta} [L(\theta,\hat{\theta}(y))]$, which is the average of the loss. % and depends on $\theta$ and $\hat{\theta}$.
An estimator $\hat{\theta}_1$ is said to dominate another estimator $\hat{\theta}_2$ if $R(\theta,\hat{\theta}_1) \leq R(\theta,\hat{\theta}_2)$ holds for every $\theta$, with strict inequality for at least one value of $\theta$.
An estimator $\hat{\theta}$ is said to be admissible if no estimator dominates $\hat{\theta}$. %and an estimator $\lambda(y)$ is said to be inadmissible if there exists a loss estimator that dominates $\lambda(y)$.
An estimator $\hat{\theta}$ is said to be inadmissible if it is not admissible (i.e. there exists an estimator that dominates $\hat{\theta}$).

In the setting above, loss estimation concerns estimation of the loss $L(\theta,\hat{\theta}(y))$, which depends not only on $\theta$ but also on $y$.
The performance of a loss estimator $\lambda(y)$ is evaluated by squared error $(\lambda(y)-L ( \theta,\hat{\theta}(y) ) )^2$.
%We evaluate the performance of a loss estimator $\lambda(y)$ by squared error: 
%\begin{align*}
%	L^{*} ( \theta,\hat{\theta}(y),\lambda(y) ) = (\lambda(y)-L ( \theta,\hat{\theta}(y) ) )^2.
%\end{align*}
Thus, a loss estimator $\lambda_1(y)$ is said to dominate another loss estimator $\lambda_2(y)$ if
\begin{align*}
	{\rm E}_{\theta} [(\lambda_1(y)-L ( \theta,\hat{\theta}(y) ) )^2] \leq {\rm E}_{\theta} [(\lambda_2(y)-L ( \theta,\hat{\theta}(y) ) )^2]
\end{align*}
holds for every $\theta$, with strict inequality for at least one value of $\theta$.
A loss estimator $\lambda(y)$ is said to be admissible if no loss estimator dominates $\lambda(y)$. 
A loss estimator $\lambda(y)$ is said to be inadmissible if it is not admissible (i.e. there exists a loss estimator that dominates $\lambda(y)$).

\subsection{Loss estimation for a normal mean vector}
Now, we focus on loss estimation for a normal mean vector \citep{Johnstone}.
Suppose that we estimate $\theta \in \mathbb{R}^p$ from an observation $Y \sim {\rm N}_p (\theta,I_p)$ by an estimator $\hat{\theta}(y)=y+g(y)$ under the quadratic loss 
\begin{align*}
	L ( \theta,\hat{\theta} )=\| \hat{\theta}-\theta \|^2.
\end{align*}
\cite{Stein74} showed that the quadratic risk $R(\theta,\hat{\theta})={\rm E}_{\theta} [ L ( \theta,\hat{\theta}(y) ) ]$ satisfies 
\begin{align*}
	R(\theta,\hat{\theta})={\rm E}_{\theta} [ \lambda^{{\rm U}}(y) ], 
\end{align*}
where 
\begin{align*}
	\lambda^{{\rm U}}(y)=p+2 \nabla \cdot g(y) + \| g(y) \|^2
\end{align*}
is called Stein's unbiased risk estimate (SURE).
SURE plays a central role in the theory of shrinkage estimation \citep{shr_book} and also closely related to model selection criteria such as Mallows' $C_p$ and AIC \citep{Boisbunon}.
%By completeness, $\lambda^{{\rm U}}(y)$ is the unique unbiased loss estimator.
However, \cite{Johnstone} showed that SURE is inadmissible for the maximum likelihood estimator when $p \geq 5$ as follows.

\begin{proposition}\citep{Johnstone}\label{prop_johnstone}
	In estimation of $\theta$ from $Y \sim {\rm N}_p (\theta, I_p)$ under the quadratic loss, consider the maximum likelihood estimator $\hat{\theta}(y) = y$.
	If $p \geq 5$, then SURE $\lambda^{{\rm U}}(y)=p$ is inadmissible and dominated by the loss estimator %and $0<c<4(p-4)$, then the loss estimator 
	\begin{align}
		\lambda(y) = p-2(p-4) \| y \|^{-2}. \label{vec_est}
	\end{align}
	%	dominates SURE $\lambda^{{\rm U}}(y)=p$.
	%\begin{align*}
	%	& {\rm E}_{\theta} [ ( \lambda(y) - \| \hat{\theta}(y) - \theta \|^2 )^2 ] = 2p+(c^2-4(p-4)c) {\rm E}_{\theta} [\| y \|^{-4}] \\
	%	 \leq & {\rm E}_{\theta} [( \lambda^{{\rm U}}(y) - \| \hat{\theta}(y) - \theta \|^2 )^2] = 2p.
	%\end{align*}
\end{proposition}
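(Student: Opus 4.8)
The plan is to compare the mean squared errors of the two loss estimators directly, using Stein-type integration by parts to handle the cross term. Write the true loss as $L = \|Y - \theta\|^2$, which is $\chi^2_p$-distributed, so the SURE estimator $\lambda^{\rm U} \equiv p$ has risk $E_\theta[(p - L)^2] = \mathrm{Var}_\theta(L) = 2p$ for every $\theta$. Writing the candidate as $\lambda(y) = p + \gamma(y)$ with $\gamma(y) = -2(p-4)\|y\|^{-2}$ and expanding the square gives
\begin{align*}
	E_\theta[(\lambda(Y) - L)^2] = 2p + 2 E_\theta[(p - L)\gamma(Y)] + E_\theta[\gamma(Y)^2],
\end{align*}
so it suffices to prove $2E_\theta[(p-L)\gamma(Y)] + E_\theta[\gamma(Y)^2] < 0$ for all $\theta$.

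The key step is to rewrite the cross term via integration by parts. Since $p - L = -\sum_{i=1}^p((Y_i - \theta_i)^2 - 1)$, applying the (second-order) Stein identity in each coordinate — valid because the only singularity of $\gamma$ is at the origin, which lies off almost every line parallel to a coordinate axis — yields $E_\theta[((Y_i-\theta_i)^2 - 1)\gamma(Y)] = E_\theta[\partial_i^2\gamma(Y)]$, hence $E_\theta[(p - L)\gamma(Y)] = -E_\theta[\Delta\gamma(Y)]$. The risk difference therefore equals $E_\theta[\gamma(Y)^2] - 2E_\theta[\Delta\gamma(Y)]$. An elementary computation with $\gamma(y) = -2(p-4)\|y\|^{-2}$ gives the pointwise identity $\Delta\gamma(y) = 4(p-4)^2\|y\|^{-4} = \gamma(y)^2$, so the risk difference collapses to $-E_\theta[\gamma(Y)^2] = -4(p-4)^2 E_\theta[\|Y\|^{-4}]$.

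It then remains only to observe that for $p \geq 5$ the quantity $E_\theta[\|Y\|^{-4}]$ is finite and strictly positive for every $\theta$: finiteness holds because $\|y\|^{-4}$ is locally integrable in $\mathbb{R}^p$ precisely when $p > 4$, and it is this same threshold that makes the integration-by-parts step legitimate (the functions $\gamma$, $\nabla\gamma$, $\Delta\gamma$, which behave like $\|y\|^{-2}, \|y\|^{-3}, \|y\|^{-4}$ near the origin, are all then locally integrable). Together with $(p-4)^2 > 0$, this gives $E_\theta[(\lambda(Y)-L)^2] = 2p - 4(p-4)^2 E_\theta[\|Y\|^{-4}] < 2p$ uniformly in $\theta$, so $\lambda$ dominates $\lambda^{\rm U}$ and $\lambda^{\rm U}$ is inadmissible.

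The one genuinely delicate point is the rigorous justification of the Stein identity for $\gamma$, whose singularity at the origin could a priori leave nonvanishing boundary terms. I expect the cleanest route is either to integrate by parts on $\mathbb{R}^p$ with a small ball excised around the origin and let its radius tend to $0$ (the sphere contributions vanishing because their area scales like $r^{p-1}$), or to invoke the version of Stein's lemma valid for weakly differentiable functions with controlled singularities, as in \cite{Fourdrinier}. All remaining computations are routine Gaussian manipulations.
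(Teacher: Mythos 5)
Your proof is correct, and it follows essentially the same route the paper uses for the matrix generalization in Theorem~\ref{th_ms2}: decompose the MSE difference as ${\rm E}[-2\Delta h + h^2]$ via Stein's identity and compute the Laplacian of the correction term (here $h=\gamma$, and your pointwise identity $\Delta\gamma=\gamma^2$ reproduces exactly the $-4(p-4)^2\,{\rm E}[\|Y\|^{-4}]$ that the paper's formula yields when $q=1$, $c=2(p-4)$). The paper itself only cites Proposition~\ref{prop_johnstone} without proof, and your handling of the singularity at the origin (local integrability of $\|y\|^{-2},\|y\|^{-3},\|y\|^{-4}$ for $p\ge 5$, plus the excised-ball limit) is the right way to make the integration by parts rigorous.
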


Figure~\ref{fig_johnstone} plots the percentage improvements in mean sqaured error of the loss estimator \eqref{vec_est} over SURE defined by
\begin{align*}
	100 \frac{{\rm E}_{\theta} [ ( \lambda^{{\rm U}}(y) - \| \hat{\theta}(y) - \theta \|^2 )^2 ]-{\rm E}_{\theta} [ ( \lambda(y) - \| \hat{\theta}(y) - \theta \|^2 )^2 ]}{{\rm E}_{\theta} [ ( \lambda^{{\rm U}}(y) - \| \hat{\theta}(y) - \theta \|^2 )^2 ]}.
\end{align*}
%which depends on $\theta$ only through its norm.
The improvement is large when the true value of $\theta$ is close to the origin, which is qualitatively similar to the risk behavior of the James--Stein estimator.
In addition to the maximum likelihood estimator, \cite{Johnstone} also proved the inadmissibility of SURE for the James--Stein estimator and provided improved loss estimators.
Based on these findings by \cite{Johnstone}, many studies have investigated loss estimation for a normal mean vector and single-response linear regression, such as \citep{Boisbunon,Fourdrinier03,Fourdrinier,Lu,Narayanan,Wan}.

\begin{figure}
	\centering
	\includegraphics{./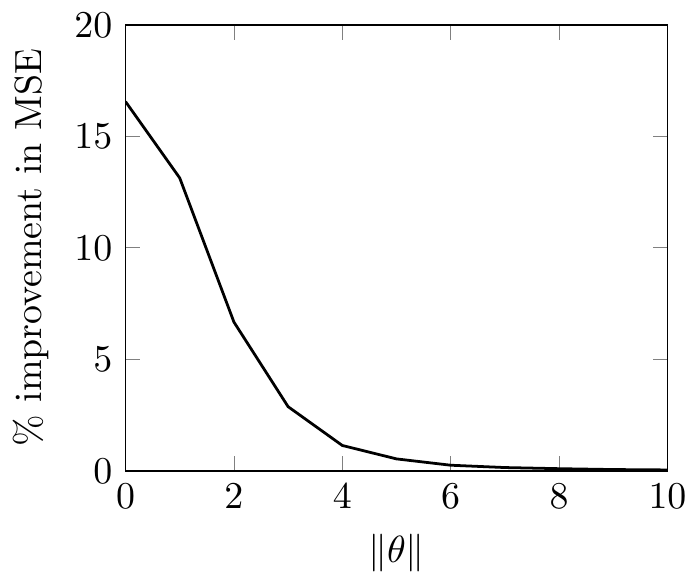}
	\caption{Percentage improvements in mean squared error of the loss estimator \eqref{vec_est} over SURE for $p=8$.}
	\label{fig_johnstone}
\end{figure}

\subsection{Loss estimation for a normal mean matrix}
Recently, \cite{Matsuda} generalized the results of \cite{Johnstone} to matrices and developed loss estimators that dominate SURE.
This is motivated from the Efron–Morris estimator, which is a matrix generalization of the James--Stein estimator that shrinks singular values towards zero \citep{Efron72}.
Specifically, suppose that we estimate $M \in \mathbb{R}^{p \times q}$ from an observation $Y \sim {\rm N}_{p,q} (M,I_p,I_q)$ by an estimator $\hat{M}(Y)$ under the Frobenius loss 
\begin{align*}
	L ( M,\hat{M} )=\| \hat{M}-M \|_{\mathrm{F}}^2 = \sum_{i,j} (\hat{M}_{ij}-M_{ij})^2.
\end{align*}
We write the singular values of a matrix $Z \in \mathbb{R}^{p \times q}$ with $p \geq q$ by $\sigma_1(Z) \geq \dots \geq \sigma_q(Z) \geq 0$.

\begin{proposition}\citep{Matsuda}\label{prop_ms}
	In estimation of $M$ from $Y \sim {\rm N}_{p,q} (M, I_p, I_q)$ under the Frobenius loss, consider the maximum likelihood estimator $\hat{M}(Y) = Y$.
	If $p \geq 3q+2$ and $q \geq 2$, then SURE $\lambda^{{\rm U}}(Y)=pq$ is inadmissible and dominated by the loss estimator 
	\begin{align}
		\lambda(Y) = pq-\sum_{i=1}^q c_i \sigma_i(Y)^{-2}, \quad c_i = \frac{4(p-q-2i-1)}{q}. \label{mat_est}
	\end{align}
\end{proposition}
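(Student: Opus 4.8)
The plan is to show that the mean‑squared‑error improvement of $\lambda$ over $\mathrm{SURE}=pq$ equals the expectation of an explicit function of the singular values, and then that this function is nonpositive. Write $\lambda(Y)=pq-h(Y)$ with $h(Y)=\sum_{i=1}^q c_i\,\sigma_i(Y)^{-2}$, and abbreviate the loss by $L=\|Y-M\|_{\mathrm{F}}^2$. Since $(\lambda-L)^2-(pq-L)^2=h^2-2h(pq-L)$, the first step is to rewrite ${\rm E}_M[h(pq-L)]$ using Gaussian integration by parts (Stein's identity) coordinatewise. Applied once it peels one factor $(Y_{ab}-M_{ab})$ off $L$, giving ${\rm E}_M[Lh]={\rm E}_M[pq\,h+{\rm tr}((Y-M)^{\top}\nabla h)]$; applied a second time to the linear term it gives ${\rm E}_M[{\rm tr}((Y-M)^{\top}\nabla h)]={\rm E}_M[\Delta h]$, where $\Delta$ is the Euclidean Laplacian on $\mathbb{R}^{p\times q}$. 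Hence
\[
{\rm E}_M[(\lambda-L)^2]-{\rm E}_M[(pq-L)^2]={\rm E}_M[h^2+2\Delta h],
\]
so it suffices to prove $h(Y)^2+2\Delta h(Y)\le 0$, with strict inequality on a set of positive probability, on the open dense full‑measure set where $\sigma_1(Y)>\dots>\sigma_q(Y)>0$, together with the integrability conditions validating the two applications of Stein's identity (finiteness of ${\rm E}_M[\sigma_q(Y)^{-4}]$ near $\sigma_q=0$, and of $\Delta h$ near coincidences $\sigma_i=\sigma_j$, the latter being $O(1/(\sigma_i^2-\sigma_j^2))$ and hence integrable against the Vandermonde weight), which hold because $p$ is large.

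The second step is to compute $\Delta h$. Writing $t_i=\sigma_i(Y)^2$ for the eigenvalues of $Y^{\top}Y$, the function $h=\sum_i c_i t_i^{-1}$ is a sum of functions of the individual $t_i$, and the Laplacian of such a function is given by the singular‑value calculus (a technical lemma of the type in the Appendix; cf.\ \citep{Matsuda}): for $F=\sum_i f_i(t_i)$,
\[
\Delta F=4\sum_i t_i f_i''(t_i)+2(p-q+1)\sum_i f_i'(t_i)+4\sum_{i\ne j}\frac{t_i f_i'(t_i)}{t_i-t_j}.
\]
Substituting $f_i(t)=c_i t^{-1}$ yields
\[
\Delta h=-2(p-q-3)\sum_i c_i t_i^{-2}-4\sum_{i\ne j}\frac{c_i t_i^{-1}}{t_i-t_j}.
\]

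The third step is to bound $h^2+2\Delta h$. Expand $h^2=\sum_i c_i^2 t_i^{-2}+2\sum_{i<j}c_ic_j t_i^{-1}t_j^{-1}$, and symmetrize the interaction term: for $i<j$ one has $t_i>t_j>0$ and $c_i-c_j=8(j-i)/q\ge 0$, so, using $t_i-t_j<t_i$, $c_j\ge 0$, and the AM--GM inequality $t_i^{-1}t_j^{-1}\le\tfrac12(t_i^{-2}+t_j^{-2})$,
\[
\frac{c_it_i^{-1}-c_jt_j^{-1}}{t_i-t_j}=\frac{c_i-c_j}{t_i(t_i-t_j)}-\frac{c_j}{t_it_j}\ \ge\ (c_i-c_j)t_i^{-2}-\tfrac12 c_j\bigl(t_i^{-2}+t_j^{-2}\bigr).
\]
Applying AM--GM once more to the cross term of $h^2$ and collecting the coefficient of each $t_m^{-2}$ reduces the required bound to the finite family of scalar inequalities
\[
\Gamma_m:=c_m^2-4(p-q-3)c_m+(c_m+4)\sum_{j>m}c_j+c_m\sum_{i<m}(c_i+4)-8\sum_{j>m}(c_m-c_j)\ \le\ 0 ,\qquad m=1,\dots,q,
\]
which are checked by a direct (if somewhat lengthy) computation using $c_i=4(p-q-2i-1)/q$; for instance $\Gamma_q=0$ and $\Gamma_1=-48(q-1)<0$ for $q\ge 2$. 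The conditions $p\ge 3q+2$ and $q\ge 2$ enter here: $p\ge 3q+2$ is exactly what makes every $c_i$ positive, so that the AM--GM steps above are valid (and it guarantees the integrability of the first step), while $q\ge 2$ makes $\Gamma_1<0$ strictly. Since $\Gamma_1<0$ and $\Gamma_m\le 0$ for all $m$, one gets $h^2+2\Delta h\le\sum_m\Gamma_m t_m^{-2}<0$ throughout, and strictness of the intermediate inequalities (e.g.\ $t_i-t_j<t_i$ holds strictly since $t_j>0$) shows the domination is strict.

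The main obstacle is the third step, and specifically the control of the singular‑value interaction term $\sum_{i\ne j}c_i t_i^{-1}/(t_i-t_j)$: it must be absorbed, via the ordering $\sigma_1\ge\dots\ge\sigma_q$, the monotonicity of the $c_i$, and AM--GM, into a sum comparable with $\sum_i c_i t_i^{-2}$, after which only the polynomial inequalities $\Gamma_m\le 0$ remain. The Laplacian formula of the second step does the essential structural work, and establishing it rigorously --- including justifying the two applications of Stein's identity across the lower‑dimensional sets $\{\sigma_i=\sigma_j\}$ and $\{\sigma_q=0\}$ --- is the other delicate point.
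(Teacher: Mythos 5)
Your argument is correct and follows essentially the same strategy as the paper's own proof of Theorem~\ref{th_ms2} (and, presumably, the cited source \citep{Matsuda}, since this proposition is only quoted, not proved, in the paper): reduce the MSE difference to ${\rm E}_M[h^2+2\Delta h]$ by two applications of Stein's identity, compute $\Delta h$ by singular-value calculus, and absorb the cross and interaction terms into $\sum_m \Gamma_m\,\sigma_m(Y)^{-4}$, the only differences from Theorem~\ref{th_ms2} being the eigenvalue-Laplacian formula needed for distinct coefficients and the per-index inequalities replacing the single Cauchy--Schwarz step. The one step you assert without carrying out does check: with $c_i=4(p-q-2i-1)/q$ the coefficients collapse to $\Gamma_m=-48(q-m)(q-m+1)/q$, which is $\le 0$ for all $m$ and $<0$ for $m<q$, consistent with your values $\Gamma_q=0$ and $\Gamma_1=-48(q-1)$.
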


Whereas Proposition~\ref{prop_ms} shows the inadmissibility of SURE, it excludes\footnote{The condition $q \geq 2$ in Proposition~\ref{prop_ms} was not explicitly stated in the original paper \citep{Matsuda}.} the case $q=1$.
Here, we provide another loss estimator dominating SURE, which reduces to \eqref{vec_est} in Proposition~\ref{prop_johnstone} when $q=1$ and will be the basis of the main results of this paper.
%which will be the basis of the forthcoming results.

\begin{theorem}\label{th_ms2}
	In estimation of $M$ from $Y \sim {\rm N}_{p,q} (M, I_p, I_q)$ under the Frobenius loss, consider the maximum likelihood estimator $\hat{M}(Y) = Y$.
	If $p \geq 2q+3$, then SURE $\lambda^{{\rm U}}(Y)=pq$ is inadmissible and dominated by the loss estimator 
	\begin{align}
		\lambda(Y) = pq - \frac{2(p-2q-2)}{q} {\rm tr} ((Y^{\top} Y)^{-1}). \label{mat_est2}
	\end{align}
\end{theorem}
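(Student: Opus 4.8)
The plan is to bound below the difference in mean squared error
\[
\delta(M) = {\rm E}_M\big[(\lambda^{{\rm U}}(Y) - L(M,Y))^2\big] - {\rm E}_M\big[(\lambda(Y) - L(M,Y))^2\big],
\]
where $L(M,Y) = \|Y-M\|_{\mathrm F}^2$, and to show $\delta(M) > 0$ for \emph{every} $M$. Write $\lambda(Y) = \lambda^{{\rm U}}(Y) - h(Y)$ with $h(Y) = \frac{2(p-2q-2)}{q}\,{\rm tr}((Y^{\top}Y)^{-1})$; expanding the squares gives $\delta(M) = {\rm E}_M[2h(Y)(\lambda^{{\rm U}}(Y)-L(M,Y))] - {\rm E}_M[h(Y)^2]$. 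Since $\lambda^{{\rm U}}(Y)=pq$ and $L(M,Y) = \sum_{i,j}(Y_{ij}-M_{ij})^2$, the cross term equals $-\sum_{i,j}{\rm E}_M[((Y_{ij}-M_{ij})^2-1)h(Y)]$, and applying the second-order Stein identity ${\rm E}[((Y_{ij}-M_{ij})^2-1)h(Y)] = {\rm E}[\partial_{ij}^2 h(Y)]$ coordinatewise yields
\[
\delta(M) = {\rm E}_M\big[-2\Delta h(Y) - h(Y)^2\big], \qquad \Delta = \textstyle\sum_{i,j}\partial_{ij}^2 .
\]
Justifying this integration by parts for the unbounded, non-smooth $h$ requires controlling contributions near the rank-deficient locus $\{\det(Y^{\top}Y)=0\}$ together with the finiteness of ${\rm E}_M[({\rm tr}((Y^{\top}Y)^{-1}))^2]$ and ${\rm E}_M[{\rm tr}((Y^{\top}Y)^{-2})]$, both of which hold for $p \ge 2q+3$ by noncentral Wishart moment bounds; I would relegate these to technical lemmas.

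The core step is to compute $\Delta\,{\rm tr}((Y^{\top}Y)^{-1})$. Setting $W = Y^{\top}Y$, matrix calculus with $\partial_{ij}W = e_j y_i^{\top} + y_i e_j^{\top}$ (for $y_i$ the $i$-th row of $Y$), $\partial_{ij}W^{-1} = -W^{-1}(\partial_{ij}W)W^{-1}$, and the identity $\sum_i y_i y_i^{\top} = W$ first gives $\nabla\,{\rm tr}(W^{-1}) = -2YW^{-2}$, and then a second differentiation and summation over $i,j$ gives
\[
\Delta\,{\rm tr}((Y^{\top}Y)^{-1}) = -2(p-q-2)\,{\rm tr}(W^{-2}) + 2\big({\rm tr}(W^{-1})\big)^2 .
\]
(As a check, for $q=1$ the right-hand side is $-2(p-4)\|Y\|^{-4}$, so Theorem~\ref{th_ms2} reduces to Proposition~\ref{prop_johnstone}.)

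Writing $a = \frac{2(p-2q-2)}{q}$ and substituting,
\[
-2\Delta h(Y) - h(Y)^2 = 4a(p-q-2)\,{\rm tr}(W^{-2}) - (4a+a^2)\big({\rm tr}(W^{-1})\big)^2 .
\]
Since $a>0$ when $p\ge 2q+3$, the Cauchy--Schwarz bound $({\rm tr}(W^{-1}))^2 \le q\,{\rm tr}(W^{-2})$ gives $-2\Delta h(Y)-h(Y)^2 \ge a\big(4(p-2q-2)-aq\big){\rm tr}(W^{-2}) = \frac{4(p-2q-2)^2}{q}{\rm tr}(W^{-2})$, using $aq = 2(p-2q-2)$. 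Taking expectations,
\[
\delta(M) \ge \frac{4(p-2q-2)^2}{q}\,{\rm E}_M\big[{\rm tr}((Y^{\top}Y)^{-2})\big] > 0
\]
for all $M$, since $p-2q-2 \ge 1$ and $W$ is almost surely invertible; hence $\lambda$ dominates $\lambda^{{\rm U}}$. I expect the main obstacles to be the Laplacian identity above — the second-differentiation bookkeeping is the one genuinely intricate calculation — and the rigorous justification of the Stein-type identity for a function singular on the rank-deficient set, which is exactly where the hypothesis $p \ge 2q+3$ enters.
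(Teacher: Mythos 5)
Your proposal is correct and follows essentially the same route as the paper: the identity $\delta(M)={\rm E}_M[-2\Delta h(Y)-h(Y)^2]$ is exactly Lemma~5 of \cite{Matsuda} (which the paper cites and which is itself proved by the second-order Stein identity you invoke), your Laplacian formula for ${\rm tr}((Y^{\top}Y)^{-1})$ agrees with the paper's Lemmas~\ref{lem_diff2} and~\ref{lem_diff3}, and the final Cauchy--Schwarz step and the resulting bound $\frac{4(p-2q-2)^2}{q}\,{\rm E}_M[{\rm tr}((Y^{\top}Y)^{-2})]$ are identical. The only substantive additions are your (correct) remarks on integrability near the rank-deficient set, which the paper leaves implicit in the cited lemma.
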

\begin{proof}
	Let $h(Y) = -c {\rm tr} ((Y^{\top} Y)^{-1})$ with $c=2(p-2q-2)/q$ so that $\lambda(Y) = \lambda^{\mathrm{U}}(Y) + h(Y)$.
	Then, from Lemma 5 of \cite{Matsuda},
	\begin{align*}
		{\rm E}_{M} [ ( \lambda(Y)- \| \hat{M}(Y)-M \|^2 )^2] - {\rm E}_{M} [ (\lambda^{\mathrm{U}}(Y)-\| \hat{M}(Y)-M \|^2)^2 ] = {\rm E}_{M} [ - 2 \Delta h (Y)  + h(Y)^2 ],
	\end{align*}
	where
	\begin{align*}
		\Delta h (Y) &= \sum_{i,j} \frac{\partial^2 h}{\partial Y_{ij}^2} (Y).
	\end{align*}
	From Lemma~\ref{lem_diff2} and Lemma~\ref{lem_diff3},
	\begin{align*}
		\Delta h (Y) &= -c \sum_{i,j} \frac{\partial^2}{\partial Y_{ij}^2} {\rm tr} ((Y^{\top} Y)^{-1}) \\
		&= 2c \sum_{i,j} \frac{\partial}{\partial Y_{ij}} (Y(Y^{\top} Y)^{-2})_{ij} \\
		&= 2c (p-q-2) {\rm tr} ((Y^{\top} Y)^{-2}) - 2c ({\rm tr} ((Y^{\top} Y)^{-1}))^2.
	\end{align*}
	Thus,
	\begin{align*}
		- 2 \Delta h (Y) + h(Y)^2 &= -4c (p-q-2) {\rm tr} ((Y^{\top} Y)^{-2}) + (c^2+4c) ({\rm tr} ((Y^{\top} Y)^{-1}))^2.
	\end{align*}
	From the Cauchy--Schwarz inequality,
	\begin{align}
		({\rm tr} ((Y^{\top} Y)^{-1}) )^2 = \left( \sum_{i=1}^q \lambda_i((Y^{\top} Y)^{-1}) \right)^2 \leq q \sum_{i=1}^q \lambda_i((Y^{\top} Y)^{-1})^2 = q {\rm tr} ((Y^{\top} Y)^{-2}), \label{cs}
	\end{align}
	where $\lambda_i(A)$ denotes the $i$-th eigenvalue of a matrix $A$.
	Therefore, by substituting $c=2(p-2q-2)/q$,
	\begin{align*}
		- 2 \Delta h (Y) + h(Y)^2 &\leq c(-4 (p-q-2) + q(c+4)) {\rm tr} ((Y^{\top} Y)^{-2}) \\
		&= -\frac{4(p-2q-2)^2}{q} {\rm tr} ((Y^{\top} Y)^{-2}) \\
		&< 0.
	\end{align*}
	Hence,
	\begin{align*}
		{\rm E}_{M} [ ( \lambda(Y)- \| \hat{M}(Y)-M \|^2 )^2] < {\rm E}_{M} [ (\lambda^{\mathrm{U}}(Y)-\| \hat{M}(Y)-M \|^2)^2 ]
	\end{align*}
	for every $M$.
\end{proof}

Figure~\ref{fig_matsuda} plots the percentage improvements in mean squared error of the loss estimator \eqref{mat_est2} over SURE like Figure~\ref{fig_johnstone}.
The improvement is large when some of the singular values of $M$ are small. 
In particular, the left panel of Figure~\ref{fig_matsuda} indicates that the loss estimator \eqref{mat_est2} attains constant reduction of MSE as long as $\sigma_2(M)=0$, even when $\sigma_1(M)$ is large.
Thus, the loss estimator \eqref{mat_est2} works well when $M$ is close to low-rank. 
Note that the loss estimator \eqref{mat_est} has qualitatively the same property \citep{Matsuda}.
These results are understood from the fact that both loss estimators \eqref{mat_est} and \eqref{mat_est2} are based on the inverse square of the singular values of $Y$.
The Efron–Morris estimator for a normal mean matrix has a similar risk property \citep{Matsuda22}.

\begin{figure}
	\centering
	\includegraphics{./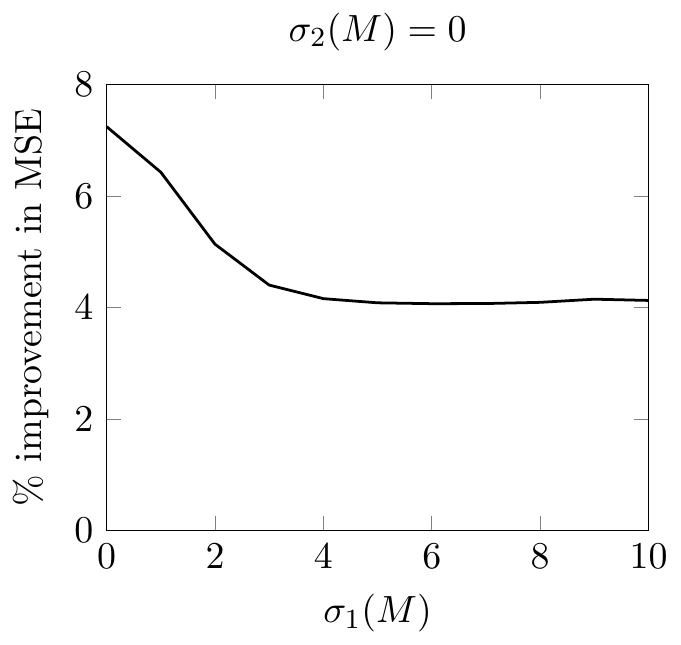}
	\includegraphics{./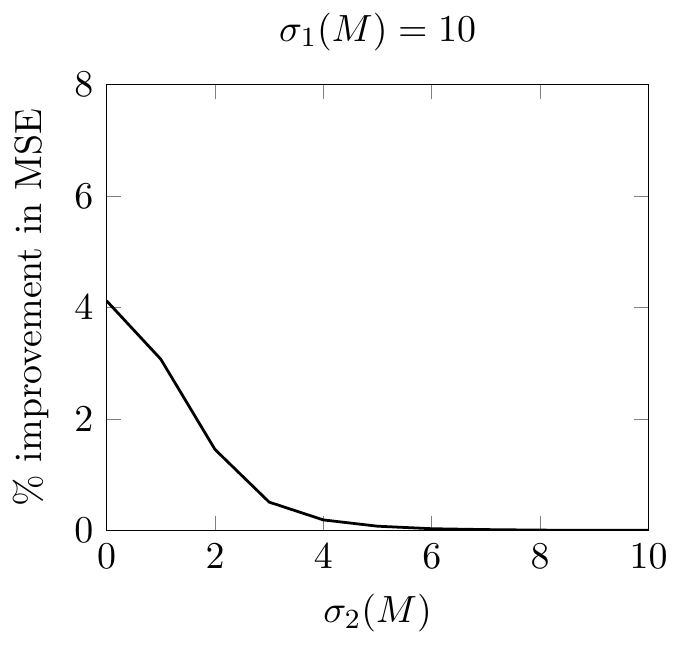}
	\caption{Percentage improvements in mean squared error of $\lambda(Y)$ in \eqref{mat_est2} over SURE for $p=8$ and $q=2$. Left: $\sigma_2(M)=0$. Right: $\sigma_1(M)=10$.}
	\label{fig_matsuda}
\end{figure}

In addition to the maximum likelihood estimator, \cite{Matsuda} also proved the inadmissibility of SURE for a general class of orthogonally invariant estimators, including the Efron--Morris estimator and reduced-rank estimators, and provided improved loss estimators.

\section{Information criterion as loss estimator}\label{sec:ic}
%Here, we introduce the general setting of loss estimation for a predictive distribution and study the property of AIC and AICc as loss estimators  in multivariate linear regression.
%Note that previous studies on loss estimation focused on point estimation as discussed in Section~\ref{sec:loss}.
%Whereas existing studies on loss estimation mostly focused on estimation of the quadratic loss for a normal mean vector, we formulate information criteria as estimators of the Kullback--Leibler loss in this study.

\subsection{Loss estimation for a predictive distribution}
Suppose that we have an observation $Y \sim p(y \mid \theta)$, where $\theta$ is an unknown parameter.
Then, we consider prediction of a future observation $\widetilde{Y} \sim p(\widetilde{y} \mid \theta)$ by using a predictive distribution $\hat{p}(\widetilde{y} \mid y)$.
The discrepancy of a predictive distribution $\hat{p}(\widetilde{y} \mid y)$ from the true distribution $p(\widetilde{y} \mid \theta)$ is evaluated by the Kullback--Leibler discrepancy
\begin{align*}
	d ( p(\widetilde{y} \mid \theta), \hat{p}(\widetilde{y} \mid y) ) &= -2 \int p(\widetilde{y} \mid \theta) \log {\hat{p} (\widetilde{y} \mid y)} {\rm d} \widetilde{y},
\end{align*}
which is equivalent to twice the Kullback--Leibler divergence 
\begin{align*}
	D ( p(\widetilde{y} \mid \theta), \hat{p}(\widetilde{y} \mid y) ) &= \int p(\widetilde{y} \mid \theta) \log \frac{p(\widetilde{y} \mid \theta)}{\hat{p} (\widetilde{y} \mid y)} {\rm d} \widetilde{y}
\end{align*}
up to an additive constant. %: $D ( p(\widetilde{y} \mid \theta), \hat{p}(\widetilde{y} \mid y) )=D ( p(\widetilde{y} \mid \theta), \hat{p}(\widetilde{y} \mid y) )+$.
The plug-in predictive distribution is defined by $p(\widetilde{y} \mid \hat{\theta}(y))$, where $\hat{\theta}(y)$ is the maximum likelihood estimate of $\theta$ from $y$.
AIC \citep{Akaike} is an approximately unbiased estimator of the Kullback--Leibler discrepancy for the plug-in predictive distribution:
\begin{align*}
	{\rm E}_{\theta} [\mathrm{AIC}] \approx d ( p(\widetilde{y} \mid \theta), {p}(\widetilde{y} \mid \hat{\theta}(y)) ).
\end{align*}
See \cite{Burnham,Konishi08} for details.

Similarly to point estimation in Section~\ref{sec:loss}, we can formulate estimation of the Kullback--Leibler discrepancy as a loss estimation problem. 
Then, AIC can be viewed as a default loss estimator like SURE in estimation of a normal mean.
From this viewpoint, it is of interest to determine whether AIC is admissible or not.
%If AIC is inadmissible, then loss estimators that dominate AIC are expected to attain better model selection performance than AIC.
In the following, we investigate this problem for the multivariate linear regression model \eqref{fullmodel}.

\subsection{Multivariate linear regression with known covariance}
First, consider the multivariate linear regression model \eqref{fullmodel} with known covariance $\Sigma \succ O$.
%The Kullback--Leibler discrepancy is given by \eqref{Delta}.
%For the normal linear regression model \eqref{fullmodel}, we consider estimation of the Kullback--Leibler discrepancy \eqref{Delta} as a loss estimation problem.
%The performance of loss estimators are evaluated by squared error.
%First, assume that $\sigma^2$ is known.
The maximum likelihood estimate is $\hat{B}=(X^{\top} X)^{-1} X^{\top} Y$.
From \eqref{Delta}, the Kullback--Leibler discrepancy for the plug-in predictive distribution is
\begin{align*}
	d( (B,\Sigma), (\hat{B},\Sigma) ) = n q \log (2 \pi e) + n \log \det {\Sigma} +  {\rm tr} ({\Sigma}^{-1} (\hat{B}-B)^{\top} X^{\top} X (\hat{B}-B)).
\end{align*}
The AIC is
\begin{align*}
	{\rm AIC} &= -2 \log p(Y \mid \hat{B}, \Sigma) + 2 pq \\
	&= nq \log(2 \pi) + n \log \det {\Sigma} +  {\rm tr} ( \Sigma^{-1} (Y-X\hat{B})^{\top} (Y-X\hat{B})) + 2 pq.
\end{align*}
%Thus,
%\begin{align}
%	{\rm AIC} = \lambda^{{\rm U}}(Y) + n \log \det {\Sigma} + nq \log(2 \pi e).
%\end{align}
%From a similar argument to Lemma \ref{lem_j_mle}, the AIC is shown to be inadmissible as an estimator of the Kullback--Leibler discrepancy.
Then, the inadmissibility of AIC is proved as follows, where MAIC is an abbreviation of ``Modified AIC."

\begin{theorem}\label{th_aic}
	Consider the multivariate linear regression model \eqref{fullmodel} with known $\Sigma \succ O$.
	If $p \geq 2q+3$, then AIC is inadmissible and dominated by
	\begin{align*}
		{\rm MAIC} = {\rm AIC}-\frac{2(p-2q-2)}{q} {\rm tr} (\Sigma ((X \hat{B})^{\top} (X \hat{B}))^{-1})
	\end{align*}
	as an estimator of the Kullback--Leibler discrepancy.
\end{theorem}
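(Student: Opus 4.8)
The plan is to reduce the statement to Theorem~\ref{th_ms2} by a change of variables that turns the multivariate regression problem (with $\Sigma$ known) into the canonical normal-mean-matrix problem, after isolating from AIC an ancillary noise term carried by the residuals. First I would introduce the symmetric positive-definite square roots $W = (X^{\top} X)^{1/2}$ and $\Sigma^{1/2}$, and set $\hat{M} = W \hat{B} \Sigma^{-1/2}$ and $M = W B \Sigma^{-1/2}$. Writing $\hat{B} - B = (X^{\top} X)^{-1} X^{\top} E$ with $E = (\varepsilon_1,\dots,\varepsilon_n)^{\top} \sim {\rm N}_{n,q}(O, I_n, \Sigma)$, the identities $W(X^{\top} X)^{-1}W = I_p$ and $\Sigma^{-1/2}\Sigma\Sigma^{-1/2} = I_q$ give $\hat{M} \sim {\rm N}_{p,q}(M, I_p, I_q)$, and simultaneously the only data-dependent part of the Kullback--Leibler discrepancy becomes the Frobenius loss, ${\rm tr}(\Sigma^{-1}(\hat{B} - B)^{\top} X^{\top} X (\hat{B} - B)) = \| \hat{M} - M \|_{\mathrm F}^2$. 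Hence $d((B,\Sigma),(\hat{B},\Sigma)) = c_0 + \| \hat{M} - M \|_{\mathrm F}^2$ with $c_0 = nq\log(2\pi e) + n\log\det\Sigma$ a quantity that is known (since $\Sigma$ is known), so estimating $d$ under squared error is equivalent to estimating $\| \hat{M} - M \|_{\mathrm F}^2$.

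Next I would rewrite AIC and MAIC in these coordinates. Since $Y - X\hat{B} = (I_n - P)E$ with $P = X(X^{\top} X)^{-1}X^{\top}$, the residual term $S := {\rm tr}(\Sigma^{-1}(Y-X\hat{B})^{\top} (Y-X\hat{B}))$ has mean $(n-p)q$ and, by the standard independence of least-squares residuals and fitted values in the Gaussian linear model, is independent of $\hat{B}$, hence of $\hat{M}$. A short computation then yields ${\rm AIC} = c_0 + pq + \xi$, where $pq$ is exactly the SURE $\lambda^{\mathrm U}$ of Theorem~\ref{th_ms2} for the observation $\hat{M}$ and $\xi = S - (n-p)q$ is mean-zero and independent of $\hat{M}$. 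Using $(X\hat{B})^{\top}(X\hat{B}) = \Sigma^{1/2}(\hat{M}^{\top} \hat{M})\Sigma^{1/2}$, one also checks ${\rm tr}(\Sigma((X\hat{B})^{\top} (X\hat{B}))^{-1}) = {\rm tr}((\hat{M}^{\top} \hat{M})^{-1})$, so that ${\rm MAIC} = c_0 + pq + h(\hat{M}) + \xi$ with $h(Z) = -\tfrac{2(p-2q-2)}{q}{\rm tr}((Z^{\top} Z)^{-1})$, precisely the function appearing in the proof of Theorem~\ref{th_ms2}.

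Finally I would compare mean squared errors. With $L = \| \hat{M} - M \|_{\mathrm F}^2$ we have ${\rm MAIC} - d = (pq + h(\hat{M}) - L) + \xi$ and ${\rm AIC} - d = (pq - L) + \xi$; expanding the squares, using ${\rm E}[\xi]=0$ and the independence of $\xi$ from $(\hat{M},M)$, the cross terms vanish and the $\xi^2$ contributions coincide, so that
\begin{align*}
	{\rm E}_{B,\Sigma}[({\rm MAIC}-d)^2] - {\rm E}_{B,\Sigma}[({\rm AIC}-d)^2] = {\rm E}_{M}[(pq + h(\hat{M}) - L)^2] - {\rm E}_{M}[(pq - L)^2].
\end{align*}
The right-hand side is exactly the quantity bounded in the proof of Theorem~\ref{th_ms2}, which is shown there to be strictly negative for every $M$ when $p \geq 2q+3$; since $M = WB\Sigma^{-1/2}$ ranges over all of $\mathbb{R}^{p\times q}$ as $B$ does, this holds for every $(B,\Sigma)$, which gives the domination.

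I expect the only genuine subtlety to be the bookkeeping around $\xi$: in this setting AIC is not literally SURE but SURE plus the mean-zero, estimator-independent noise $\xi$ contributed by the residual sum of squares, and the key observation is that $\xi$ inflates the MSE of AIC and of MAIC by the same additive amount and therefore cancels in the comparison. Once this is recognized, the remainder is the change-of-variables identities (routine linear algebra) together with a direct appeal to Theorem~\ref{th_ms2}.
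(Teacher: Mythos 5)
Your proposal is correct and follows essentially the same route as the paper: the same change of variables to $\hat{M}=(X^{\top}X)^{1/2}\hat{B}\Sigma^{-1/2}\sim {\rm N}_{p,q}(M,I_p,I_q)$, the same use of the independence of the residual sum of squares from $\hat{B}$, and the same final inequality. The only organizational difference is that you package the residual contribution as a mean-zero independent noise $\xi$ whose cross terms vanish and then quote Theorem~\ref{th_ms2} wholesale, whereas the paper computes the cross moment ${\rm E}[h\,{\rm tr}(\Sigma^{-1}R)]$ explicitly and re-derives the bound via Lemma~\ref{lem_exp0} and the Cauchy--Schwarz inequality \eqref{cs}; both reduce to the identical expression $(c^2+4c)({\rm tr}((Z^{\top}Z)^{-1}))^2-4c(p-q-2)\,{\rm tr}((Z^{\top}Z)^{-2})$.
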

\begin{proof}
	Let $R=(Y-X\hat{B})^{\top}(Y-X\hat{B})$ be the residual.
	Then, from the standard theory of multivariate linear regression \citep{Anderson}, $\hat{B}$ and $R$ are independent and distributed as $\hat{B} \sim {\rm N}_{p,q}(B,(X^{\top} X)^{-1},\Sigma)$ and $R \sim W_q ( n-p, \Sigma )$, respectively.
	Thus, $Z = (X^{\top} X)^{1/2} \hat{B} \Sigma^{-1/2}$ is independent from $R$ and distributed as $Z \sim {\rm N}_{p,q}(\bar{Z},I_p,I_q)$ where $\bar{Z}=(X^{\top} X)^{1/2} {B} \Sigma^{-1/2}$.
	
	Let $h = -c {\rm tr} (\Sigma ((X \hat{B})^{\top} (X \hat{B}))^{-1})=-c {\rm tr} ((Z^{\top} Z)^{-1})$ with $c=2(p-2q-2)/q$ so that ${\rm MAIC} = {\rm AIC}+h(Y)$.
	Then,
	\begin{align*}
		{\rm E}_B [ ( {\rm MAIC}-d )^2] - {\rm E}_B [ ( {\rm AIC}-d )^2] &= {\rm E}_B [ h^2 + 2h( {\rm AIC}-d )],
	\end{align*}
	where we write $d( (B,\Sigma), (\hat{B},\Sigma) )$ by $d$ for simplicity.
	Note that
	\begin{align*}
		{\rm AIC}-d &= (2p-n) q + {\rm tr} (\Sigma^{-1} (Y-X\hat{B})^{\top} (Y-X\hat{B})) - {\rm tr} (\Sigma^{-1} (\hat{B}-B)^{\top} X^{\top} X (\hat{B}-B)) \\
		&= (2p-n) q + {\rm tr} (\Sigma^{-1} R) - {\rm tr} ((Z-\bar{Z})^{\top}(Z-\bar{Z})).
	\end{align*}
	From ${\rm E}[R] = (n-p)\Sigma$ and the independence of $Z$ and $R$,
	\begin{align*}
		{\rm E}_B [{\rm tr} ((Z^{\top} Z)^{-1}) {\rm tr} (\Sigma^{-1} R)] = (n-p)q \cdot {\rm E}_B [{\rm tr} ((Z^{\top} Z)^{-1})].
	\end{align*}
	Also, from Lemma~\ref{lem_exp0},
	\begin{align*}
		&{\rm E} [ {\rm tr} ((Z^{\top} Z)^{-1}) {\rm tr} ((Z-\bar{Z})^{\top} (Z-\bar{Z}))] \\
		=& pq {\rm E} [ {\rm tr} (  (Z^{\top} Z)^{-1}  ) ] - 2 (p-q-2) {\rm E} [ {\rm tr} ( (Z^{\top} Z)^{-2} ) ] +2 {\rm E} [ ({\rm tr} ((Z^{\top} Z)^{-1}) )^2 ].
	\end{align*}
	Therefore,
	\begin{align*}
		{\rm E}_B [ ( {\rm MAIC}-d )^2] - {\rm E}_B [ ( {\rm AIC}-d )^2] &= {\rm E}_B [ (c^2+4c) ({\rm tr} ((Z^{\top} Z)^{-1}) )^2 -4c(p-q-2) {\rm tr} ((Z^{\top} Z)^{-2})] \\
		& \leq qc \left( c-\frac{4(p-2q-2)}{q} \right) {\rm E}_B [ {\rm tr} ((Z^{\top} Z)^{-2})] \\
		& < 0
	\end{align*}
	for every $B$, where we used ${({\rm tr} ((Z^{\top} Z)^{-1}) )^2} \leq q {{\rm tr} ( (Z^{\top} Z)^{-2})}$ from \eqref{cs} and $c=2(p-2q-2)/q$. 
\end{proof}

%Note that estimation of the Kullback--Leibler discrepancy for the plug-in predictive distribution is equivalent to estimation of the loss $L(M,\hat{M})$ for the estimator $\hat{M}=X \hat{B}$ and both AIC and SURE are exactly unbiased.
For the Gaussian linear regression model with known variance ($q=1$), \cite{Boisbunon} discussed the equivalence between AIC and SURE.
Such a correspondence holds in the current setting as well.
Specifically, consider estimation of $M$ from $Y \sim {\rm N}_{n,q}(M,I_n,\Sigma)$ under the loss $L(M,\hat{M})={\rm tr} ( \Sigma^{-1} (\hat{M}-M)^{\top} (\hat{M}-M))$.
Then, SURE for the estimator $\hat{M}=X \hat{B}$ is
\begin{align*}
	\lambda^{{\rm U}}(Y) &= {\rm tr} (\Sigma^{-1} (Y-X \hat{B})^{\top} (Y-X \hat{B})) + (2p-n)q \\
	&= {\rm AIC} - nq \log(2 \pi e) - n \log \det {\Sigma},
\end{align*}
and the loss is related to the Kullback--Leibler discrepancy as 
\begin{align*}
	L(M,\hat{M})=d( (B,\Sigma), (\hat{B},\Sigma) ) - nq \log(2 \pi e) - n \log \det {\Sigma}.
\end{align*}
Thus,  estimation of the loss $L(M,\hat{M})$ for the estimator $\hat{M}=X \hat{B}$ is equivalent to estimation of the Kullback--Leibler discrepancy for the plug-in predictive distribution, and both SURE and AIC are exactly unbiased.
%Note that \cite{Boisbunon} discussed such an equivalence property for general elliptically symmetric distributions.
Under this correspondence, Proposition~\ref{th_aic} is rewritten as follows.

\begin{corollary}
	For the multivariate linear regression model \eqref{fullmodel} with known $\Sigma \succ O$, consider the estimator $\hat{M} = X \hat{B}$ of $M=XB$ under the loss $L(M,\hat{M})={\rm tr} ( \Sigma^{-1} (\hat{M}-M)^{\top} (\hat{M}-M))$.
	If $p \geq 2q+3$, then SURE is inadmissible and dominated by the loss estimator
	\begin{align*}
		\lambda(Y) = \lambda^{{\rm U}}(Y)-\frac{2(p-2q-2)}{q} {\rm tr} (\Sigma ((X \hat{B})^{\top} (X \hat{B}))^{-1}).
	\end{align*}
\end{corollary}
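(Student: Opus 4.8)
The plan is to obtain the corollary as a direct translation of Theorem~\ref{th_aic} through the correspondence established in the paragraph immediately preceding the corollary, so that no new analysis is needed. The key observation is that both the loss--to--Kullback--Leibler--discrepancy relation and the SURE--to--AIC relation involve the \emph{same} deterministic offset.

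Concretely, I would first isolate the constant $c_0 = nq \log(2\pi e) + n \log \det \Sigma$, which depends only on the known quantities $n$, $q$ and $\Sigma$. The two displayed identities just above the corollary read $\lambda^{{\rm U}}(Y) = {\rm AIC} - c_0$ and $L(M,\hat{M}) = d((B,\Sigma),(\hat{B},\Sigma)) - c_0$; subtracting, the constant cancels and $\lambda^{{\rm U}}(Y) - L(M,\hat{M}) = {\rm AIC} - d((B,\Sigma),(\hat{B},\Sigma))$ pointwise in $Y$. Next I would note that the correction term in $\lambda(Y)$ is literally the same as the one in ${\rm MAIC}$, namely $\frac{2(p-2q-2)}{q} {\rm tr}(\Sigma ((X\hat{B})^{\top}(X\hat{B}))^{-1})$, so that $\lambda(Y) = {\rm MAIC} - c_0$ as well, and therefore $\lambda(Y) - L(M,\hat{M}) = {\rm MAIC} - d((B,\Sigma),(\hat{B},\Sigma))$ pointwise. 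Squaring these two identities and taking ${\rm E}_{B,\Sigma}$ yields ${\rm E}_{B,\Sigma}[(\lambda^{{\rm U}}(Y) - L)^2] = {\rm E}_{B,\Sigma}[({\rm AIC} - d)^2]$ and ${\rm E}_{B,\Sigma}[(\lambda(Y) - L)^2] = {\rm E}_{B,\Sigma}[({\rm MAIC} - d)^2]$. Theorem~\ref{th_aic} states that, for $p \geq 2q+3$, the second right-hand side is strictly smaller than the first for every $B$, so the same strict inequality holds between the mean squared errors of $\lambda(Y)$ and $\lambda^{{\rm U}}(Y)$ as loss estimators. Since $X$ has full column rank (as $n \geq p$), the map $B \mapsto XB$ is injective, so ranging over all $B \in \mathbb{R}^{p\times q}$ is the same as ranging over all values of the mean matrix $M = XB$ in the parameter set, and the domination statement is exactly the claim of the corollary.

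The only points requiring care are bookkeeping rather than analysis: checking that the additive constant relating $L$ to the Kullback--Leibler discrepancy is the very same $c_0$ that relates SURE to AIC (it is, equal to $nq\log(2\pi e) + n\log\det\Sigma$ in each case), and checking that $(X\hat{B})^{\top}(X\hat{B}) = \hat{B}^{\top} X^{\top} X \hat{B}$ is almost surely invertible so that $\lambda(Y)$ is well defined, which follows because $X^{\top} X$ is invertible and $\hat{B} = (X^{\top} X)^{-1} X^{\top} Y$ has full column rank with probability one. I do not expect any genuine obstacle: all of the content already lives in Theorem~\ref{th_aic}. If a self-contained argument were preferred, one could instead reprove the inequality directly by writing $\lambda(Y) = \lambda^{{\rm U}}(Y) + h(Y)$ with $h$ the correction term expressed through $Z = (X^{\top} X)^{1/2} \hat{B} \Sigma^{-1/2} \sim {\rm N}_{p,q}(\bar{Z}, I_p, I_q)$, expanding ${\rm E}_{B,\Sigma}[(\lambda - L)^2] - {\rm E}_{B,\Sigma}[(\lambda^{{\rm U}} - L)^2] = {\rm E}_{B,\Sigma}[h^2 + 2 h (\lambda^{{\rm U}} - L)]$, and bounding it with Lemma~\ref{lem_exp0} and the Cauchy--Schwarz inequality \eqref{cs}, exactly mirroring the proof of Theorem~\ref{th_aic}; but the translation argument above is shorter and makes the equivalence transparent.
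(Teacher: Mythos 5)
Your proposal is correct and follows exactly the route the paper intends: the corollary is stated as a direct restatement of Theorem~\ref{th_aic} under the correspondence $\lambda^{\rm U}(Y)-L(M,\hat M)={\rm AIC}-d$ and $\lambda(Y)-L(M,\hat M)={\rm MAIC}-d$, which holds pointwise because the same constant $nq\log(2\pi e)+n\log\det\Sigma$ cancels in both relations. Your explicit bookkeeping (and the remarks on invertibility of $(X\hat B)^{\top}(X\hat B)$ and the parametrization $M=XB$) merely spells out what the paper leaves implicit.
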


%Note that \cite{Boisbunon} discussed the connection between AIC, Mallows' $C_p$ and SURE in estimation of a normal mean vector with known variance.
%Recently, shrinkage.
%See \cite{George} for a survey of recent developments on the parallel between point estimation and predictive density estimation.

\subsection{Multivariate linear regression with unknown covariance}
Next, consider the multivariate linear regression model \eqref{fullmodel} with unknown covariance $\Sigma \succ O$.
The maximum likelihood estimate is $\hat{B}=(X^{\top} X)^{-1} X^{\top} Y$ and $\hat{\Sigma}=(Y-X\hat{B})^{\top}(Y-X\hat{B})/n$.
From \eqref{Delta}, the Kullback--Leibler discrepancy is
\begin{align*}
	d ( (B,\Sigma), (\hat{B},\hat{\Sigma}) ) = n q \log (2 \pi) + n \log \det \hat{\Sigma} + n {\rm tr} (\hat{\Sigma}^{-1} \Sigma) + {\rm tr} (\hat{\Sigma}^{-1} (\hat{B}-B)^{\top} X^{\top} X (\hat{B}-B)).
\end{align*}
The AIC is
\begin{align*}
	{\rm AIC} = nq \log(2 \pi) + n \log \det \hat{\Sigma} + 2 \left( pq + \frac{q(q+1)}{2} \right).
\end{align*}
The corrected AIC is
\begin{align*}
	{\rm AICc} = nq \log(2 \pi) + n \log \det \hat{\Sigma} + \frac{2n}{n-p-q-1} \left( pq + \frac{q(q+1)}{2} \right).
\end{align*}
The corrected AIC is exactly unbiased while AIC is biased \citep{Hurvich,Sugiura,Bedrick}.
Then, we obtain the following.

\begin{theorem}\label{th_aic2}
	For the multivariate linear regression model \eqref{fullmodel} with unknown $\Sigma \succ O$, AIC is inadmissible and dominated by AICc as an estimator of the Kullback--Leibler discrepancy.
\end{theorem}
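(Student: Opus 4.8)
The plan is to exploit the fact that AIC and AICc differ only through their (non-random) penalty terms, so that AICc is nothing but AIC plus a positive constant, and then to compare mean squared errors via a bias--variance decomposition, invoking the exact unbiasedness of AICc.

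First I would set $k = pq + q(q+1)/2$ and note that both ${\rm AIC}$ and ${\rm AICc}$ share the common data-dependent part $nq\log(2\pi) + n\log\det\hat\Sigma$, so that
\begin{align*}
	{\rm AICc} = {\rm AIC} + b, \qquad b = \left( \frac{2n}{n-p-q-1} - 2 \right) k = \frac{2(p+q+1)}{n-p-q-1}\, k .
\end{align*}
Since ${\rm AICc}$ is well-defined only when $n > p+q+1$, and $k \geq 1$, we have $b > 0$. Consequently ${\rm AICc} - d$ and ${\rm AIC} - d$ differ by the same deterministic constant $b$ for every $(B,\Sigma)$, where I abbreviate $d = d((B,\Sigma),(\hat B,\hat\Sigma))$.

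Next, fixing $(B,\Sigma)$ and writing $A = {\rm AIC} - d$ and $A' = {\rm AICc} - d = A + b$, I would use that adding a constant does not change the variance, so the difference of the mean squared errors equals the difference of the squared biases:
\begin{align*}
	{\rm E}_{B,\Sigma}[(A')^2] - {\rm E}_{B,\Sigma}[A^2] = ({\rm E}_{B,\Sigma}[A'])^2 - ({\rm E}_{B,\Sigma}[A])^2 .
\end{align*}
By the exact unbiasedness of AICc, ${\rm E}_{B,\Sigma}[A'] = {\rm E}_{B,\Sigma}[{\rm AICc}] - {\rm E}_{B,\Sigma}[d] = 0$, hence ${\rm E}_{B,\Sigma}[A] = -b$, and the right-hand side equals $-b^2 < 0$ uniformly in $(B,\Sigma)$. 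This shows AICc dominates AIC and therefore AIC is inadmissible. Equivalently, and more transparently, one may compute directly ${\rm E}_{B,\Sigma}[(A')^2 - A^2] = {\rm E}_{B,\Sigma}[b(2A + b)] = b(2\,{\rm E}_{B,\Sigma}[A] + b) = -b^2$.

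The only delicate point — and really the sole obstacle — is to ensure the relevant moments are finite, so that the conclusion $\mathrm{MSE}({\rm AICc}) < \mathrm{MSE}({\rm AIC})$ is a genuine strict inequality rather than a vacuous comparison of two infinities. Since $A' = A + b$, it suffices to control ${\rm E}_{B,\Sigma}[(A')^2]$; the non-constant terms of ${\rm AICc} - d$ are $n\log\det\hat\Sigma$ (the penalty cancels), $n\,{\rm tr}(\hat\Sigma^{-1}\Sigma)$, and ${\rm tr}(\hat\Sigma^{-1}(\hat B - B)^{\top} X^{\top} X(\hat B - B))$, which are logarithmic and low-order polynomial functionals of the Wishart matrix $R = n\hat\Sigma \sim W_q(n-p,\Sigma)$ (independent of $\hat B \sim {\rm N}_{p,q}(B,(X^{\top} X)^{-1},\Sigma)$), whose moments exist once $n-p$ is large enough; alternatively, the identity ${\rm E}_{B,\Sigma}[(A')^2 - A^2] = -b^2$ requires only the first moment ${\rm E}_{B,\Sigma}[|{\rm AIC} - d|] < \infty$, which holds as soon as ${\rm E}[\hat\Sigma^{-1}]$ exists, and already pins down the MSE gap exactly. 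Unlike Theorems~\ref{th_ms2} and~\ref{th_aic}, no shrinkage-type estimator is introduced here — the improvement comes for free from the nonzero bias of AIC — so I do not expect the Cauchy--Schwarz or inverse-Wishart differentiation arguments of those proofs to play any role.
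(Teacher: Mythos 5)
Your proposal is correct and follows essentially the same route as the paper: both arguments rest on the facts that AICc $=$ AIC $+ b$ for a deterministic constant $b>0$ and that AICc is exactly unbiased, so the variance and covariance-with-$d$ terms cancel and the MSE gap reduces to the squared bias $b^2$ of AIC. Your direct computation ${\rm E}[(A+b)^2-A^2]=b(2{\rm E}[A]+b)=-b^2$ is just a condensed form of the paper's bias--variance--covariance decomposition, with the added (harmless) bonus of pinning down the gap exactly.
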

\begin{proof}
	For two random variables $S$ and $T$, we have
	\begin{align*}
		&{\rm E} [(S-T)^2] \\
		=& {\rm E} [(S-{\rm E} [S]+{\rm E} [S]-{\rm E} [T]+{\rm E} [T]-T)^2] \\
		=& {\rm E} [(S-{\rm E} [S])^2] + ({\rm E} [S]-{\rm E}[T])^2 + {\rm E} [(T-{\rm E} [T])^2] - 2 {\rm E} [(S-{\rm E} [S]) (T-{\rm E} [T])] \\
		=& {\rm Var} [S] + ({\rm E} [S]-{\rm E}[T])^2 + {\rm Var} [T] - 2 {\rm Cov} [S,T]. 
	\end{align*}
	Hence, the mean squared error of AIC is given by
	\begin{align}
		{\rm E} [({\rm AIC}-d)^2] = {\rm Var} [{\rm AIC}] + ({\rm E} [{\rm AIC}]-{\rm E}[d])^2 + {\rm Var} [d]- 2 {\rm Cov} [{\rm AIC},d], \label{aic_risk}
	\end{align}
	where we write $d ( (B,\Sigma), (\hat{B},\hat{\Sigma}) )$ by $d$ for simplicity.
	Similarly, the mean squared error of AICc is
	\begin{align}
		{\rm E} [({\rm AICc}-d)^2] = {\rm Var} [{\rm AICc}] + {\rm Var} [d]- 2 {\rm Cov} [{\rm AICc},d], \label{aicc_risk}
	\end{align}
	where we used ${\rm E} [{\rm AICc}]={\rm E}[d]$.
	
	On the other hand, since the difference between AIC and AICc is constant, %they have the same variance.
	\begin{align}
		{\rm Var} [{\rm AIC}] = {\rm Var} [{\rm AICc}], \quad {\rm Cov} [{\rm AIC},d] = {\rm Cov} [{\rm AICc},d]. \label{aic_aicc}
	\end{align}
	
	From \eqref{aic_risk}, \eqref{aicc_risk} and \eqref{aic_aicc},
	\begin{align*}
		{\rm E}_{B,\Sigma} [({\rm AICc}-d)^2] \leq {\rm E}_{B,\Sigma} [({\rm AIC}-d)^2]
	\end{align*}
	for every $B$ and $\Sigma$.
	%	Since the mean squared error (MSE) is equal to the sum of the square of bias and variance, AICc has smaller MSE than AIC.
\end{proof}

In the next section, we show that the corrected AIC is still inadmissible and provide improved loss estimators.
Note that \cite{Davies} showed that the corrected AIC is the minimum variance unbiased estimator of the expected Kullback--Leibler discrepancy. 

\section{Inadmissibility of the corrected AIC}\label{sec:main}
%\subsection{Inadmissibility result}
%Now, we consider the case where $\sigma^2$ is unknown.

%From \eqref{EAICc}, the corrected AIC is an exactly unbiased loss estimator.
%Furthermore, from \cite{Davies}, the corrected AIC is the minimum variance unbiased loss estimator. 
%Let
%\begin{align}
%	{\rm MAICc} (c) = {\rm AICc} - c {\rm tr} (\hat{\Sigma} ((X \hat{B})^{\top} (X \hat{B}))^{-1}), \label{MAICc2}
%%	{\rm MAICc} (c) = {\rm AICc} - c {\rm tr} (\hat{\Sigma} ((X \hat{B})^{\top} (X \hat{B}))^{-1}), \label{MAICc2}
%\end{align}
%where $c$ is a constant.
%Then, ${\rm MAICc}$ dominates ${\rm AICc}$ as follows.

For the multivariate linear regression model \eqref{fullmodel} with unknown covariance, the corrected AIC is the minimum variance unbiased estimator of the expected Kullback--Leibler discrepancy from the Lehmann--Scheff\'e theorem \citep{Davies}.  
Also, Theorem~\ref{th_aic2} showed that the AIC is dominated by the corrected AIC as an estimator of the Kullback--Leibler discrepancy.
%Also, the corrected AIC is the minimum variance unbiased estimator of the expected Kullback--Leibler discrepancy \cite{Davies}. 
However, the corrected AIC is still inadmissible as follows, where MAICc is an abbreviation of ``Modified AICc."

\begin{theorem}\label{th_main}
	Consider the multivariate linear regression model \eqref{fullmodel} with unknown $\Sigma \succ O$.
	Let
	\begin{align*}
		\bar{c} = \frac{4 n^2}{(n-p)(q(n-p)+2)} \left( p-2q-2- \frac{q^2+q-2}{n-p-q-1} \right). 
	\end{align*}
	%	Assume that the model \eqref{fullmodel} includes the true distribution of $y$.
	If $n-p-q-1>0$ and $\bar{c}>0$, then for any $c \in (0,\bar{c}]$, AICc is inadmissible and dominated by
	\begin{align}
		{\rm MAICc} = {\rm AICc} - c {\rm tr} (\hat{\Sigma} ((X \hat{B})^{\top} (X \hat{B}))^{-1}) \label{MAICc2}
		%	{\rm MAICc} (c) = {\rm AICc} - c {\rm tr} (\hat{\Sigma} ((X \hat{B})^{\top} (X \hat{B}))^{-1}), \label{MAICc2}
	\end{align}
	as an estimator of the Kullback--Leibler discrepancy. 
\end{theorem}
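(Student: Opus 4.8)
The plan is to follow the strategy of the proof of Theorem~\ref{th_aic}, now carrying the extra randomness of $\hat{\Sigma}$ along. First I would pass to canonical form. As in that proof, $\hat{B}$ and the residual $R=(Y-X\hat{B})^{\top}(Y-X\hat{B})$ are independent, with $\hat{B}\sim {\rm N}_{p,q}(B,(X^{\top}X)^{-1},\Sigma)$ and $R\sim W_q(n-p,\Sigma)$. Setting $Z=(X^{\top}X)^{1/2}\hat{B}\Sigma^{-1/2}\sim {\rm N}_{p,q}(\bar{Z},I_p,I_q)$ with $\bar{Z}=(X^{\top}X)^{1/2}B\Sigma^{-1/2}$, and $W=\Sigma^{-1/2}R\Sigma^{-1/2}\sim W_q(n-p,I_q)$, the matrices $Z$ and $W$ are independent. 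A direct computation gives ${\rm tr}(\hat{\Sigma}((X\hat{B})^{\top}(X\hat{B}))^{-1})=n^{-1}{\rm tr}(W(Z^{\top}Z)^{-1})$ and, after the $n\log\det\hat{\Sigma}$ terms cancel,
\begin{align*}
	{\rm AICc}-d = \frac{nq(n+p)}{n-p-q-1} - n^2\,{\rm tr}(W^{-1}) - n\,{\rm tr}\!\left(W^{-1}(Z-\bar{Z})^{\top}(Z-\bar{Z})\right),
\end{align*}
where the constant is pinned down by the exact unbiasedness ${\rm E}_{B,\Sigma}[{\rm AICc}]={\rm E}_{B,\Sigma}[d]$.

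Writing $h=-c\,n^{-1}{\rm tr}(W(Z^{\top}Z)^{-1})$ so that ${\rm MAICc}={\rm AICc}+h$, one has ${\rm E}_{B,\Sigma}[({\rm MAICc}-d)^2]-{\rm E}_{B,\Sigma}[({\rm AICc}-d)^2]={\rm E}_{B,\Sigma}[h^2+2h({\rm AICc}-d)]$, and the goal is to show this is negative for every $(B,\Sigma)$. Using independence of $Z$ and $W$ I would integrate out $W$ first, conditionally on $Z$, with the identities ${\rm E}[W]=(n-p)I_q$, ${\rm E}[W^{-1}]=(n-p-q-1)^{-1}I_q$, ${\rm E}[({\rm tr}(WU))^2]=2(n-p){\rm tr}(U^2)+(n-p)^2({\rm tr}\,U)^2$, and — the new ingredient — the joint moment identity ${\rm E}[{\rm tr}(WU){\rm tr}(W^{-1}V)]=\frac{n-p}{n-p-q-1}{\rm tr}(U){\rm tr}(V)-\frac{2}{n-p-q-1}{\rm tr}(UV)$ for fixed symmetric $U,V$, which I would establish as a technical lemma by successive conditioning on the blocks of $W$ (the same device that yields ${\rm E}[W^{-1}]$). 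The remaining $Z$-expectations are then of ${\rm tr}((Z^{\top}Z)^{-1})$, of ${\rm tr}((Z^{\top}Z)^{-1}){\rm tr}((Z-\bar{Z})^{\top}(Z-\bar{Z}))$, and of ${\rm tr}((Z^{\top}Z)^{-1}(Z-\bar{Z})^{\top}(Z-\bar{Z}))$: the second is Lemma~\ref{lem_exp0}, and for the third I would prove a companion identity expressing it as a linear combination of ${\rm E}[{\rm tr}((Z^{\top}Z)^{-1})]$, ${\rm E}[{\rm tr}((Z^{\top}Z)^{-2})]$ and ${\rm E}[({\rm tr}((Z^{\top}Z)^{-1}))^2]$, obtained by repeated use of Stein's identity for the matrix normal together with the divergence formulas in Lemmas~\ref{lem_diff2} and~\ref{lem_diff3} (the computations already behind the proof of Theorem~\ref{th_ms2}).

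After these substitutions the mean-squared-error difference is a linear combination of $a_1={\rm E}[{\rm tr}((Z^{\top}Z)^{-1})]$, $a_2={\rm E}[{\rm tr}((Z^{\top}Z)^{-2})]$ and $a_3={\rm E}[({\rm tr}((Z^{\top}Z)^{-1}))^2]$ with coefficients depending only on $c,n,p,q$: the $a_1$-coefficient simplifies to $-4c(n+p)/(n-p-q-1)<0$, the $h^2$ term contributes $\frac{c^2}{n^2}\big(2(n-p)a_2+(n-p)^2 a_3\big)$, and the rest of $2h({\rm AICc}-d)$ contributes $-\tfrac{4c}{n-p-q-1}$ times a nonnegative combination of $a_1,a_2,a_3$. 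Discarding the manifestly nonpositive contributions and applying the Cauchy--Schwarz bound $({\rm tr}((Z^{\top}Z)^{-1}))^2\le q\,{\rm tr}((Z^{\top}Z)^{-2})$ — strict almost surely, since the singular values of $Z$ are a.s. distinct — to $a_3$ inside the $h^2$ term, the difference is bounded above by a multiple of $a_2$ of the form $\big(\tfrac{c^2(n-p)(q(n-p)+2)}{n^2}-\tfrac{4c}{n-p-q-1}(\,\cdots)\big)a_2$; this is negative precisely for $0<c\le\bar{c}$, where $\bar{c}$ is the positive root displayed in the statement. Hence the strict inequality holds for every $(B,\Sigma)$, which is the claimed domination.

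The main obstacle is the joint moment formula for $W$ and $W^{-1}$ and, tied to it, the bookkeeping that combines the Wishart moments with Lemma~\ref{lem_exp0} and its companion: it is here that the $(n,p,q)$-dependence becomes entangled and the exact threshold $\bar{c}$, with its denominator $(n-p)(q(n-p)+2)$, emerges. Among the subsidiary computations, verifying the companion Stein identity for ${\rm tr}((Z^{\top}Z)^{-1}(Z-\bar{Z})^{\top}(Z-\bar{Z}))$ — which requires differentiating $\bar{Z}(Z^{\top}Z)^{-1}$ and ${\rm tr}((Z^{\top}Z)^{-1})\,Z(Z^{\top}Z)^{-1}$ with respect to $Z$ — is the most delicate.
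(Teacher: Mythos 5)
Your proposal is correct and follows essentially the same route as the paper: reduction to the canonical variables $Z$ and the standardized Wishart, the decomposition ${\rm E}[h^2+2h({\rm AICc}-d)]$, Wishart and Stein moment identities to express everything through ${\rm E}[{\rm tr}((Z^{\top}Z)^{-1})]$, ${\rm E}[{\rm tr}((Z^{\top}Z)^{-2})]$ and ${\rm E}[({\rm tr}((Z^{\top}Z)^{-1}))^2]$, and the Cauchy--Schwarz step yielding exactly the threshold $\bar{c}$. The only organizational difference is that you integrate out the Wishart part first via the joint moment ${\rm E}[{\rm tr}(WU){\rm tr}(W^{-1}V)]$, whereas the paper applies Stein's identity to $Z$ first (Lemma~\ref{lem_exp}) and then invokes ${\rm E}[{\rm tr}(S^{-1})S]$ and ${\rm E}[SAS^{-1}]$; both orderings land on the same linear combination (note only that the Cauchy--Schwarz bound must also absorb the positive $({\rm tr}((Z^{\top}Z)^{-1}))^2$ contribution coming from $2h({\rm AICc}-d)$, not just the one from $h^2$, to recover the stated $\bar c$).
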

\begin{proof}
	From the standard theory of multivariate linear regression \citep{Anderson}, the maximum likelihood estimates $\hat{B}=(X^{\top} X)^{-1} X^{\top} Y$ 
	and $\hat{\Sigma}= (Y-X \hat{B})^{\top} (Y-X \hat{B})/n$ for \eqref{fullmodel} are independently distributed as 
	\begin{align*}
		\hat{B} \sim {\rm N}_{p,q} (B, (X^{\top} X)^{-1}, \Sigma), \quad \hat{\Sigma} \sim W_q \left(n-p,\frac{1}{n} \Sigma \right). 
	\end{align*}
	Thus, $Z = (X^{\top} X)^{1/2} \hat{B} \Sigma^{-1/2}$ and $S=\Sigma^{-1/2} \hat{\Sigma} \Sigma^{-1/2}$ are independetly distributed as 
	\begin{align*}
		Z \sim {\rm N}_{p,q}(\bar{Z},I_p,I_q), \quad S \sim W_q \left( n-p, \frac{1}{n} I_q \right),
	\end{align*}
	where $\bar{Z}=(X^{\top} X)^{1/2} {B} \Sigma^{-1/2}$.
	
	Again, we write $d ( (B,\Sigma), (\hat{B},\hat{\Sigma}) )$ in \eqref{Delta} as $d$ for simplicity.
	Let $d = d( (B,\Sigma), (\hat{B},\hat{\Sigma}) )$ and
	\begin{align*}
		h={\rm MAICc}-{\rm AICc} = - c {\rm tr} (\hat{\Sigma} ((X \hat{B})^{\top} (X \hat{B}))^{-1}).
	\end{align*}
	Then,
	\begin{align}
		{\rm E}_{B,\Sigma} [ ( {\rm MAICc}-d )^2] - {\rm E}_{B,\Sigma} [ ({\rm AICc}-d)^2 ] = {\rm E}_{B,\Sigma} [ h^2 + 2h ({\rm AICc}-d) ]. \label{risk_diff}
	\end{align}
	We evaluate each term.
	Note that $h=-c {\rm tr} (S (Z^{\top} Z)^{-1})$, since
	\begin{align*}
		{\rm tr} (\hat{\Sigma} ((X \hat{B})^{\top} (X \hat{B}))^{-1}) &= {\rm tr} (S {\Sigma}^{1/2} ((X \hat{B})^{\top} (X \hat{B}))^{-1} {\Sigma}^{1/2}) \\
		&= {\rm tr} (S ({\Sigma}^{-1/2} \hat{B}^{\top} X^{\top} X \hat{B} {\Sigma}^{-1/2} )^{-1}) \\
		&= {\rm tr} (S (Z^{\top} Z)^{-1}).
	\end{align*}
	In the following, we write ${\rm E}_{B,\Sigma}$ as ${\rm E}$ for simplicity.
	
	First, by using Lemma~\ref{lem_gupta3},
	\begin{align}
		{\rm E} [h^2] &=  c^2 {\rm E} [{\rm tr} (S (Z^{\top} Z)^{-1}) {\rm tr} (S (Z^{\top} Z)^{-1})] \nonumber \\
		&=  \frac{2(n-p)}{n^2} c^2 {\rm E} [{\rm tr} ((Z^{\top} Z)^{-2})] + \frac{(n-p)^2}{n^2} c^2 {\rm E} [ ({\rm tr} ((Z^{\top} Z)^{-1}))^2]. \label{h2}
	\end{align}
	
	Next, from
	\begin{align*}
		{\rm AICc} &= nq \log (2\pi) + n \log \det \hat{\Sigma} + \frac{nq(n+p)}{n-p-q-1},
	\end{align*}
	\begin{align*}
		d &= nq \log (2\pi) + n \log \det \hat{\Sigma} + n {\rm tr} (\hat{\Sigma}^{-1} \Sigma) +  {\rm tr} (X (\hat{B}-B) \hat{\Sigma}^{-1} (\hat{B}-B)^{\top} X^{\top}) \\
		&= nq \log (2\pi) + n \log \det \hat{\Sigma} + n {\rm tr} (S^{-1}) +  {\rm tr} ((Z-\bar{Z})^{\top} (Z-\bar{Z}) S^{-1}),
	\end{align*}
	we have
	\begin{align}
		{\rm E} [2h ({\rm AICc}-d)] =&  -\frac{2n(n+p)q}{n-p-q-1} c {\rm E} [{\rm tr} (S (Z^{\top} Z)^{-1})] + 2 n c {\rm E} [{\rm tr} (S^{-1}) {\rm tr} (S (Z^{\top} Z)^{-1})] \nonumber \\
		& \quad + 2 c {\rm E} [{\rm tr} ((Z-\bar{Z})^{\top} (Z-\bar{Z}) S^{-1}) {\rm tr} (S (Z^{\top} Z)^{-1})]. \label{term}
	\end{align}
	Using the independence of $S$ and $Z$,
	\begin{align}
		{\rm E} [{\rm tr} (S (Z^{\top} Z)^{-1})] &= {\rm tr} ({\rm E} [S] \cdot {\rm E} [(Z^{\top} Z)^{-1})]) \nonumber \\
		&= {\rm tr} \left( \frac{n-p}{n} I_q \cdot {\rm E} [(Z^{\top} Z)^{-1})] \right) \nonumber \\
		&= \frac{n-p}{n} {\rm E} [{\rm tr} ( (Z^{\top} Z)^{-1} )]. \label{term1}
	\end{align}
	Similarly, using the independence of $S$ and $Z$ and Lemma~\ref{lem_gupta1},
	\begin{align}
		{\rm E} [{\rm tr} (S^{-1}) {\rm tr} (S (Z^{\top} Z)^{-1})] &=  {\rm tr} ({\rm E} [{\rm tr} (S^{-1}) S] \cdot {\rm E} [(Z^{\top} Z)^{-1}]) \nonumber \\
		&=  {\rm tr} \left( \frac{(n-p)q-2}{n-p-q-1} I_q \cdot {\rm E} [(Z^{\top} Z)^{-1}] \right) \nonumber \\	
		&= \frac{(n-p)q-2}{n-p-q-1} {\rm E} [{\rm tr} ( (Z^{\top} Z)^{-1} )]. \label{term2}
	\end{align}
	%Since $X (X^{\top} X)^{-1} X^{\top}$ is an orthogonal projection matrix, it is written as
	%\begin{align}
	%	X (X^{\top} X)^{-1} X^{\top} = U^{\top} \begin{pmatrix} I_p & O \\ O & O \end{pmatrix} U
	%\end{align}
	%by using an orthogonal matrix $U$.
	%Then,
	%\begin{align}
	%	U X \hat{B} \Sigma^{-1/2} &= U X (X^{\top} X)^{-1} X^{\top} Y \Sigma^{-1/2} \\
	%	& \sim {\rm N} (U X (X^{\top} X)^{-1} X^{\top} X B \Sigma^{-1/2},(U X (X^{\top} X)^{-1} X^{\top})(U X (X^{\top} X)^{-1} X^{\top})^{\top},\Sigma^{-1}) \\
	%	& \sim {\rm N} (U X (X^{\top} X)^{-1} X^{\top} X B \Sigma^{-1/2},U X (X^{\top} X)^{-1} X^{\top} U^{\top},\Sigma^{-1}) \\
	%	& \sim {\rm N} (U X (X^{\top} X)^{-1} X^{\top} X B \Sigma^{-1/2},\begin{pmatrix} I & O \\ O & O \end{pmatrix},I) \\
	%\end{align}
	%which means
	%\begin{align}
	%	U X \hat{B} \Sigma^{-1/2} = \begin{pmatrix} \hat{Z} \\ O \end{pmatrix},
	%\end{align}
	%where $\hat{Z} \sim {\rm N} (Z,I,I)$ and
	%\begin{align}
	%	U X B \Sigma^{-1/2} = \begin{pmatrix} Z \\ O \end{pmatrix}.
	%\end{align}
	Also, from Lemma~\ref{lem_exp},
	\begin{align}
		&{\rm E} [{\rm tr} ((Z-\bar{Z})^{\top} (Z-\bar{Z}) S^{-1}) {\rm tr} (S (Z^{\top} Z)^{-1})] \nonumber \\
		=& p \frac{(n-p)q-2}{n-p-q-1} {\rm E} [ {\rm tr} (  (Z^{\top} Z)^{-1}  ) ] - 2 \frac{(n-p-1)(p-q-2)+2}{n-p-q-1} {\rm E} [ {\rm tr} ( (Z^{\top} Z)^{-2} ) ] \nonumber \\
		& \quad +2 \frac{n-q-2}{n-p-q-1} {\rm E} [ ({\rm tr} ((Z^{\top} Z)^{-1}) )^2 ]. \label{term3}
	\end{align}
	Therefore, by substituting \eqref{term1}, \eqref{term2} and \eqref{term3} into \eqref{term}, %by putting $A = {\rm E} [{\rm tr} ( (Z^{\top} Z)^{-1} )]$ and $B = {\rm E} [{\rm tr} ( (Z^{\top} Z)^{-2} )]$,
	\begin{align}
		&{\rm E} [2h ({\rm AICc}-d)] \nonumber \\
		=& \frac{-2(n-p)(n+p)q+2n((n-p)q-2)+2p((n-p)q-2)}{n-p-q-1} c {\rm E} [{\rm tr} ( (Z^{\top} Z)^{-1} )] \nonumber \\
		& \ - 4c \frac{(n-p-1)(p-q-2)+2}{n-p-q-1} {\rm E} [ {\rm tr} ( (Z^{\top} Z)^{-2} ) ] +4c \frac{n-q-2}{n-p-q-1} {\rm E} [ ({\rm tr} ((Z^{\top} Z)^{-1}) )^2 ] \nonumber \\
		=& -4c \frac{n+p}{n-p-q-1} {\rm E} [{\rm tr} ( (Z^{\top} Z)^{-1} )] \nonumber \\
		& \ - 4c \frac{(n-p-1)(p-q-2)+2}{n-p-q-1} {\rm E} [ {\rm tr} ( (Z^{\top} Z)^{-2} ) ] +4c \frac{n-q-2}{n-p-q-1} {\rm E} [ ({\rm tr} ((Z^{\top} Z)^{-1}) )^2 ].  \label{hAIC}
	\end{align}
	
	Hence, by substituting \eqref{h2} and \eqref{hAIC} into \eqref{risk_diff},
	\begin{align*}
		&{\rm E}_{B,\Sigma} [ ( {\rm MAICc}-d )^2] - {\rm E}_{B,\Sigma} [ ({\rm AICc}-d)^2 ] \\
		=& -4c \frac{n+p}{n-p-q-1} {\rm E} [{\rm tr} ( (Z^{\top} Z)^{-1} )] \\
		& \quad + \left( \frac{2(n-p)}{n^2} c^2 - 4c \frac{(n-p-1)(p-q-2)+2}{n-p-q-1} \right) {\rm E} [ {\rm tr} ( (Z^{\top} Z)^{-2} ) ] \\
		& \quad + \left( \frac{(n-p)^2}{n^2} c^2 + 4c \frac{n-q-2}{n-p-q-1} \right) {\rm E} [ ({\rm tr} ((Z^{\top} Z)^{-1}) )^2 ] \\
		\leq& -4c \frac{n+p}{n-p-q-1} {\rm E} [{\rm tr} ( (Z^{\top} Z)^{-1} )] \\
		& \quad + c \left( \frac{(n-p)(q(n-p)+2)}{n^2} c - 4 \frac{(n-p-1)(p-q-2)-q(n-q-2)+2}{n-p-q-1} \right) {\rm E} [ {\rm tr} ( (Z^{\top} Z)^{-2} ) ] \\
		=& -4c \frac{n+p}{n-p-q-1} {\rm E} [{\rm tr} ( (Z^{\top} Z)^{-1} )] + \frac{(n-p)(q(n-p)+2)}{n^2} c (c-\bar{c}) {\rm E} [ {\rm tr} ( (Z^{\top} Z)^{-2} ) ],
	\end{align*}
	where we used $({\rm tr} ((Z^{\top} Z)^{-1}) )^2 \leq q {\rm tr} ((Z^{\top} Z)^{-2})$ from \eqref{cs} and $(n-p-1)(p-q-2)-q(n-q-2)=(n-p-q-1)(p-2q-2)-q^2-q$.
	Therefore, if $0 < c \leq \bar{c}$, then
	\begin{align*}
		{\rm E}_{B,\Sigma} [ ( {\rm MAICc}-d )^2] \leq {\rm E}_{B,\Sigma} [ ({\rm AICc}-d)^2 ]
	\end{align*}
	for every $B$ and $\Sigma$.
	
	%	\begin{align}
		%	&{\rm E}_{\beta,\sigma^2} [ {\rm tr} (\hat{\Sigma} ((X \hat{B})^{\top} (X \hat{B}))^{-1} ) {\rm tr} (X (\hat{B}-B) \hat{\Sigma}^{-1} (\hat{B}-B)^{\top} X^{\top} ) ] \\
		%	=& {\rm E}_{\beta,\sigma^2} [ {\rm tr} (\hat{\Sigma} ((X \hat{B})^{\top} (X \hat{B}))^{-1} ) {\rm tr} (X (\hat{B}-B) \hat{\Sigma}^{-1} (\hat{B}-B)^{\top} X^{\top} ) ]
		%\end{align}
	\end{proof}
	
	Note that, when $n$ is sufficiently large, the condition $\bar{c}>0$ in Theorem~\ref{th_main} is reduced to $p \geq 2q+3$, which is the same as in Theorem~\ref{th_ms2} and Theorem~\ref{th_aic}.

					In the case of a single response ($q=1$), Theorem~\ref{th_main} is reexpressed as follows.
					Here, we employ the usual notation for the linear regression with a single response:
					\begin{align}
						y \sim {\rm N}_n (X \beta, \sigma^2 I_n), \quad \hat{\beta} = (X^{\top} X)^{-1} X^{\top} y, \quad \hat{\sigma}^2 = \| y-X\hat{\beta} \|^2/n. \label{single}
					\end{align}
					%and Theorem~\ref{th_main} is written as follows.
					%Theorem~\ref{th_main} and Corollary~\ref{cor_main} become simple as follows.
					%In this case, some arguments in the proof of Theorem becomes more tight and we obtain the following.
					%By specializing to $q=1$, we obtain the results for the linear regression with single response variable.
					
					\begin{corollary}\label{cor_main}
						Consider the linear regression model $y \sim {\rm N}_n (X \beta, \sigma^2 I_n)$ with unknown $\sigma^2$.
						Let
						\begin{align*}
							\bar{c} = \frac{4 n^2 (p-4)}{(n-p) (n-p+2)}.
						\end{align*}
						If $n-p-2>0$ and $\bar{c} > 0$, then for any $c \in (0,\bar{c}]$, ${\rm AICc}$ is inadmissible and dominated by
						\begin{align*}
							{\rm MAICc} = {\rm AICc} - c \hat{\sigma}^2 \| X \hat{\beta} \|^{-2}
						\end{align*}
						as an estimator of the Kullback--Leibler discrepancy. 
						%	If $p \geq 5$ and
						%	\begin{align}
							%			0 < c \leq \bar{c} := \frac{4 n^2 (p-4)}{(n-p) (n-p+2)}, \label{c_range}
							%		\end{align}
						%	then ${\rm MAICc}$ dominates ${\rm AICc}$:
						%	\begin{align*}	
							%			{\rm E}_{\beta,\sigma^2} [ ( {\rm MAICc}-d )^2 ] \leq {\rm E}_{\beta,\sigma^2} [ ({\rm AICc}-d)^2 ]
							%		\end{align*}
						%	for every $\beta$ and $\sigma^2$.
					\end{corollary}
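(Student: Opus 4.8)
The plan is to obtain the corollary as the direct specialization of Theorem~\ref{th_main} to the single-response case $q=1$, with the dictionary $B=\beta$, $\Sigma=\sigma^2$, $\hat{B}=\hat{\beta}$, $\hat{\Sigma}=\hat{\sigma}^2$, and $X\hat{B}=X\hat{\beta}$ between the multivariate notation of \eqref{fullmodel} and the single-response notation of \eqref{single}. First I would note that setting $q=1$ in the model \eqref{fullmodel} gives exactly $y\sim{\rm N}_n(X\beta,\sigma^2I_n)$, and that the Kullback--Leibler discrepancy $d((B,\Sigma),(\hat B,\hat\Sigma))$, ${\rm AICc}$, and the loss all collapse to their scalar-response forms, so no bookkeeping constant is lost in the reduction.

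Next I would carry out the substitution $q=1$ in the constant $\bar c$ of Theorem~\ref{th_main}. The factor $q(n-p)+2$ becomes $n-p+2$, the term $p-2q-2$ becomes $p-4$, and, crucially, $q^2+q-2=0$ when $q=1$, so the bracketed correction $-(q^2+q-2)/(n-p-q-1)$ vanishes. Hence
\begin{align*}
	\bar c = \frac{4n^2}{(n-p)(n-p+2)}\,(p-4),
\end{align*}
which is the constant in the corollary. Likewise, the side conditions $n-p-q-1>0$ and $\bar c>0$ of Theorem~\ref{th_main} become $n-p-2>0$ and $\bar c>0$ (equivalently $p>4$, since $n-p>0$ and $n-p+2>0$), matching the hypotheses of Corollary~\ref{cor_main}.

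Then I would simplify the dominating estimator \eqref{MAICc2}. With $q=1$ both $\hat{\Sigma}=\hat{\sigma}^2$ and $(X\hat B)^{\top}(X\hat B)=\|X\hat\beta\|^2$ are scalars, so
\begin{align*}
	{\rm tr}\!\left(\hat{\Sigma}\,\big((X\hat B)^{\top}(X\hat B)\big)^{-1}\right) = \hat{\sigma}^2\,\|X\hat\beta\|^{-2},
\end{align*}
and therefore ${\rm MAICc}={\rm AICc}-c\hat{\sigma}^2\|X\hat\beta\|^{-2}$, as stated. Invoking Theorem~\ref{th_main} for $c\in(0,\bar c]$ then yields ${\rm E}_{\beta,\sigma^2}[({\rm MAICc}-d)^2]\le{\rm E}_{\beta,\sigma^2}[({\rm AICc}-d)^2]$ with strict inequality somewhere, which is the conclusion.

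I do not anticipate a genuine obstacle here: the corollary is essentially a transcription of Theorem~\ref{th_main}, and the only point that deserves explicit verification is that the correction term in $\bar c$ disappears precisely because $q^2+q-2$ vanishes at $q=1$, together with the sanity check that $\|X\hat\beta\|^2>0$ almost surely so that the improved estimator is well defined (which holds whenever $X$ has full column rank). An alternative would be to re-run the argument of Theorem~\ref{th_main} from scratch with the scalar-response lemmas, but that would merely reproduce the same computation, so specialization is the cleaner route.
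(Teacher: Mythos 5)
Your proposal is correct and matches the paper's route exactly: the corollary is obtained by specializing Theorem~\ref{th_main} to $q=1$, noting that $q^2+q-2=0$ there so $\bar{c}$ reduces to $4n^2(p-4)/((n-p)(n-p+2))$, that $n-p-q-1>0$ becomes $n-p-2>0$, and that the trace term collapses to $\hat{\sigma}^2\|X\hat{\beta}\|^{-2}$. No gaps.
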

					%\begin{proof}
					%	From \eqref{risk_diff2},
					%	\begin{align*}	
						%			&{\rm E}_{\beta,\sigma^2} [ ( {\rm MAICc}-\Delta )^2 ] - {\rm E}_{\beta,\sigma^2} [ ({\rm AICc}-\Delta)^2 ] \\
						%			= & \left( \frac{(n-p)(n-p+2)}{n^2} c^2 -4 (p-4) c \right) {\rm E} [\| Z \|^{-4}] - 4c \frac{n+p}{n-p-2} {\rm E} [\| Z \|^{-2}],
						%		\end{align*}
					%	which is negative for every $\beta$ and $\sigma^2$ if \eqref{c_range} holds.
					%\end{proof}
					
					Note that the condition $\bar{c}>0$ in Corollary~\ref{cor_main} is equivalent to $p \geq 5$, as in Proposition~\ref{prop_johnstone}.

					\section{Numerical results}\label{sec:experiments}
					%Here, we present numerical results to examine the performance of MAICc in Theorem~\ref{th_main}.
					
					\subsection{Single response}
					First, we consider the case of single response \eqref{single}, which corresponds to the model \eqref{fullmodel} with $q=1$ and Corollary~\ref{cor_main}.
					We compare the mean squared errors of AICc and MAICc by Monte Carlo experiments with $10^6$ repetitions. 
					%We consider the setting $n=30$, $p=10$, $\beta=(b,0,0)$ and $\sigma^2=1$.
					Each entry of $X$ is generated from ${\rm N}(0,1)$ independently.
					We plot the percentage improvement in mean squared error (MSE) of MAICc over AICc: 
					\begin{align*}
						100 \frac{{\rm E}_{\beta,\sigma^2} [({\rm AICc}-d)^2]-{\rm E}_{\beta,\sigma^2} [({\rm MAICc}-d)^2]}{{\rm E}_{\beta,\sigma^2} [({\rm AICc}-d)^2]},
					\end{align*}
					where $d=d ( (\beta,\sigma^2), (\hat{\beta},\hat{\sigma}^2) )$ is the Kullback--Leibler discrepancy \eqref{Delta}.
					
					Figure~\ref{fig_c_uni} compares MAICc with different values of $c$ for $n=30$, $p=10$ and $\sigma^2=1$.
					The left panel indicates that MAICc with $c=\bar{c}$ dominates MAICc with $c=0.5 \bar{c}$, whereas the right panel shows that the mean squared error of MAICc at $\beta=0$ attains its minimum around $c = 0.8 \bar{c}$.
					Overall, setting $c=\bar{c}$ in MAICc seems to be a reasonable choice.
					Thus, we adopt this value of $c$ in the following experiments.
					
					\begin{figure}
						\centering
						\includegraphics{./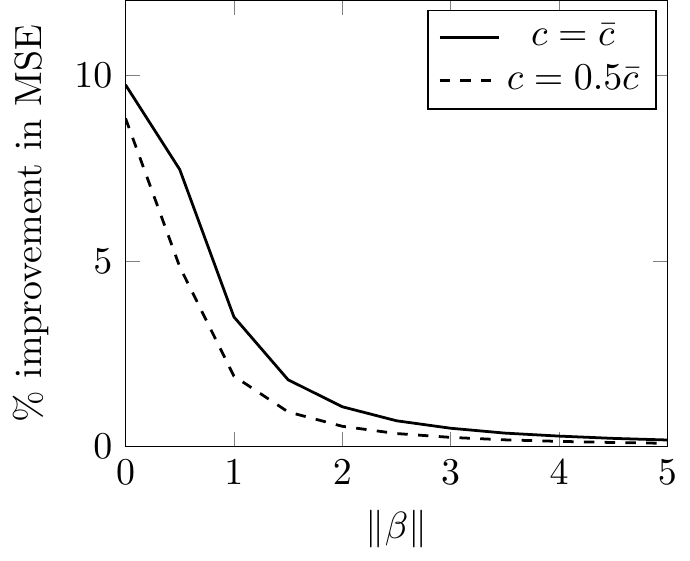}
						\includegraphics{./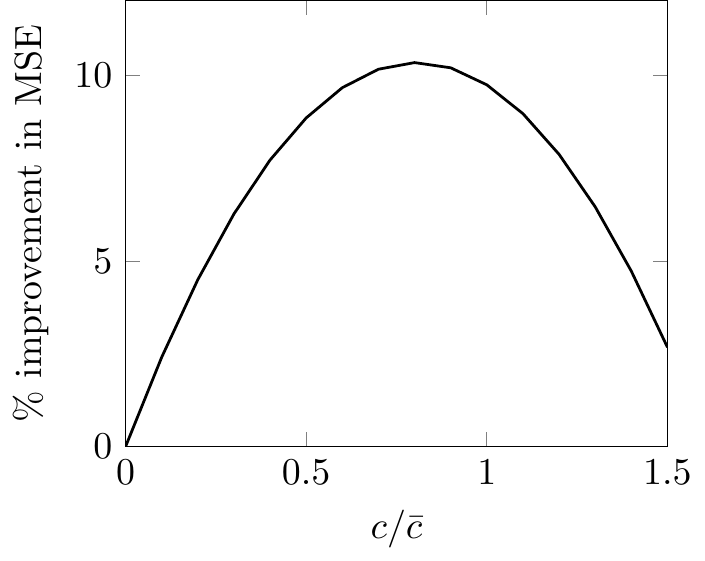}
						\caption{Percentage improvement in mean squared error of MAICc over AICc when $n=30$, $p=10$ and $\sigma^2=1$. Left: Plot with respect to $\| \beta \|$ for $c=\bar{c}$ and $c=0.5 \bar{c}$. Right: Plot with respect to $c/\bar{c}$ for $\beta=0$.}
						\label{fig_c_uni}
					\end{figure}
					
					Figure~\ref{fig_n_uni} compares the performance of MAICc for different values of $n$ when $p=10$ and $\sigma^2=1$.
					It indicates that the percentage improvement in MSE is larger for smaller $n$.
					Figure~\ref{fig_p_uni} compares the performance of MAICc for different values of $p$ when $n=30$ and $\sigma^2=1$.
					It indicates that the percentage improvement in MSE is maximized around $p=15$.
					Figure~\ref{fig_sigma2_uni} compares the performance of MAICc for different values of $\sigma^2$ when $n=30$ and $p=10$.
					It indicates that the percentage improvement in MSE is larger for larger $\sigma^2$ at $\beta \neq 0$. Note that the percentage improvement in MSE at $\beta=0$ does not depend on $\sigma^2$.
					
					\begin{figure}
						\centering
						\includegraphics{./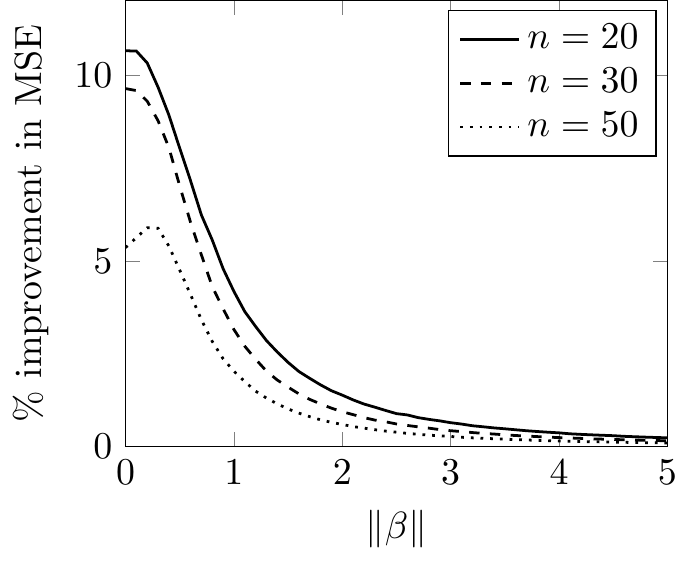}
						\includegraphics{./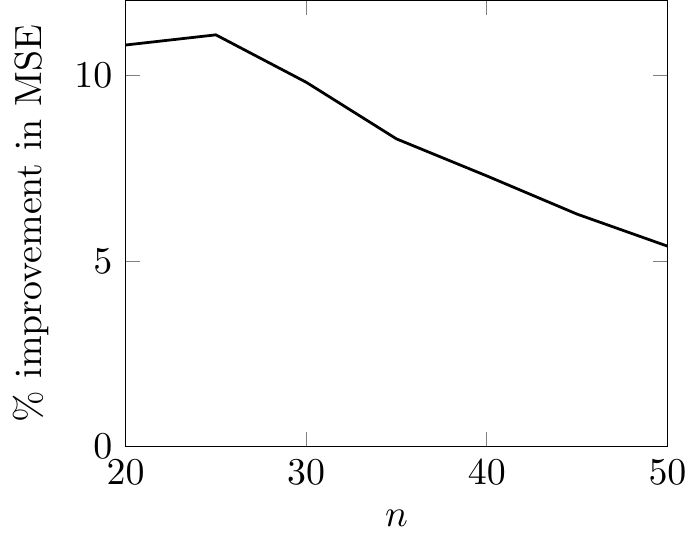}
						\caption{Percentage improvement in mean squared error of MAICc with $c=\bar{c}$ over AICc when $p=10$ and $\sigma^2=1$. Left: Plot with respect to $\| \beta \|$ for $n=30, 50, 100$. Right: Plot with respect to $n$ for $\beta=0$.}
						\label{fig_n_uni}
					\end{figure}
					
					\begin{figure}
						\centering
						\includegraphics{./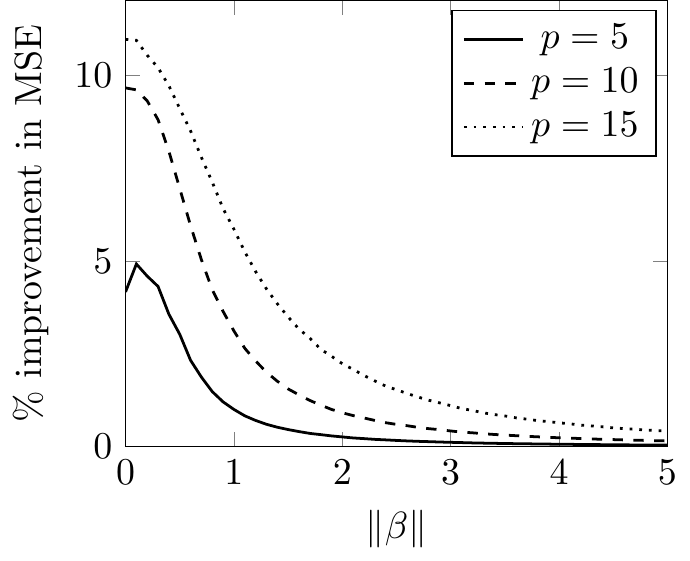}
						\includegraphics{./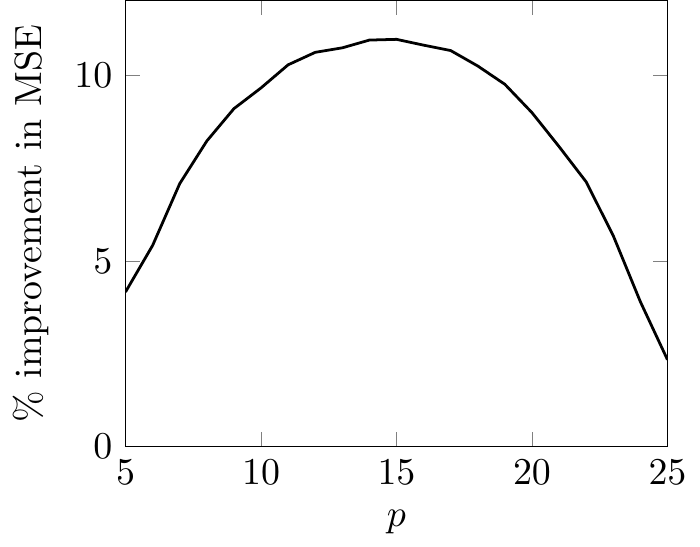}
						\caption{Percentage improvement in mean squared error of MAICc with $c=\bar{c}$ over AICc when $n=30$ and $\sigma^2=1$. Left: Plot with respect to $\| \beta \|$ for $p=5, 10, 20$. Right: Plot with respect to $p$ for $\beta=0$.}
						\label{fig_p_uni}
					\end{figure}
					
					\begin{figure}
						\centering
						\includegraphics{./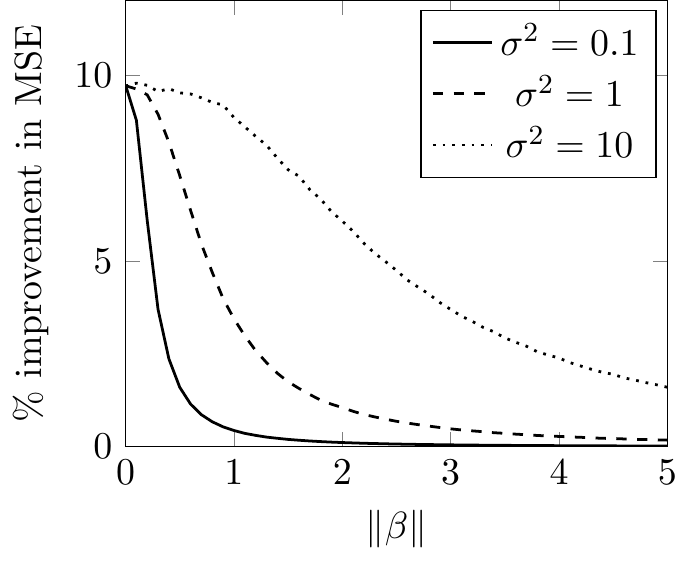}
						\caption{Percentage improvement in mean squared error of MAICc with $c=\bar{c}$ over AICc when $n=30$ and $p=10$.}
						\label{fig_sigma2_uni}
					\end{figure}
					
					\subsection{Multi-response}
					Now, consider the multi-response case \eqref{fullmodel}.
					As in the previous subsection, we compare the mean squared error of AICc and MAICc by Monte Carlo experiments with $10^6$ repetitions. 
					Each entry of $X$ is generated from ${\rm N}(0,1)$ independently.
					We plot the percentage improvement in mean squared error (MSE) of MAICc over AICc: 
					\begin{align*}
						100 \frac{{\rm E}_{B,\Sigma} [({\rm AICc}-d)^2]-{\rm E}_{B,\Sigma} [({\rm MAICc}-d)^2]}{{\rm E}_{B,\Sigma} [({\rm AICc}-d)^2]},
					\end{align*}
					where $d=d ( (B,\Sigma), (\hat{B},\hat{\Sigma}) )$ is the Kullback--Leibler discrepancy \eqref{Delta}.
					
					Figure~\ref{fig_c} compares MAICc with different values of $c$ for $n=30$, $p=10$ and $q=2$.
					The improvement is large when some of the singular values of $M$ are small. 
					Similarly to Figure~\ref{fig_matsuda}, MAICc attains constant reduction of MSE as long as $\sigma_2(M)=0$, even when $\sigma_1(M)$ is large.
					Thus, MAICc works well when $B$ is close to low-rank, which corresponds to reduced-rank regression \citep{Reinsel}.
					We found that MAICc with $c=\bar{c}$ is numerically dominated by MAICc with $c=2 \bar{c}$, which implies that the upper bound for $c$ in Theorem~\ref{th_main} may be improved.
					%We leave it for future research.
					We adopt $c=\bar{c}$ in the following experiments.
					
					\begin{figure}
						\centering
						\includegraphics{./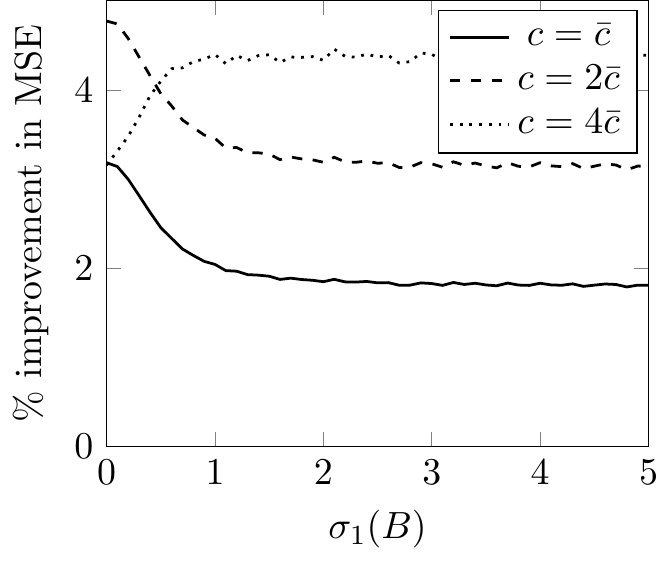}
						\includegraphics{./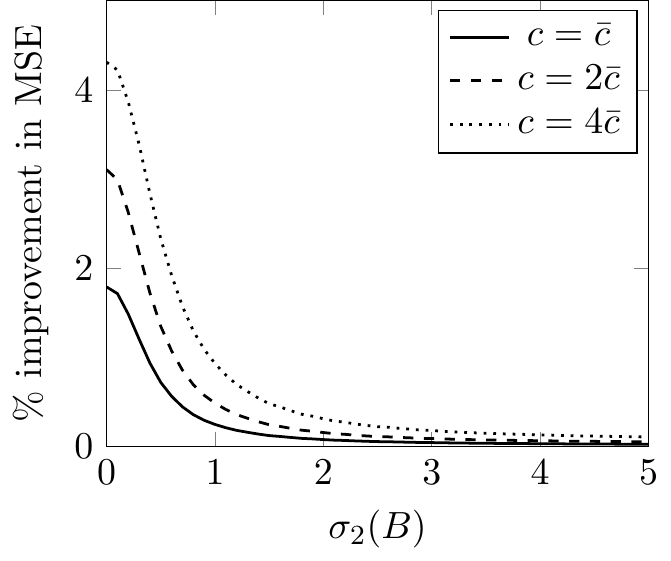}
						\includegraphics{./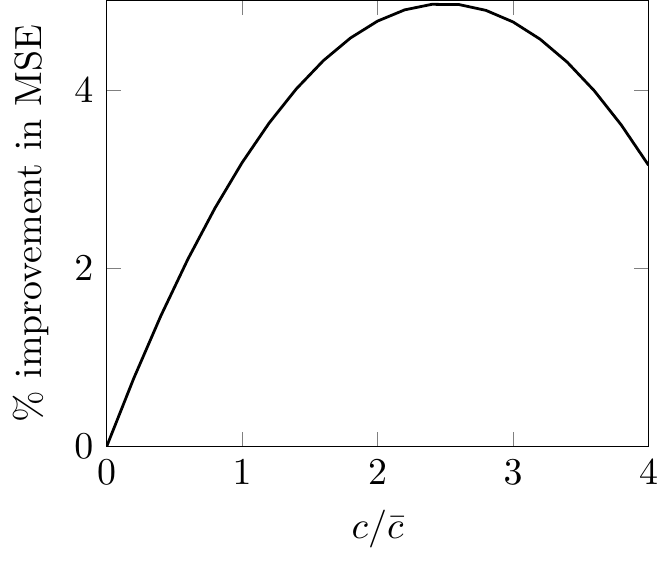}
						\includegraphics{./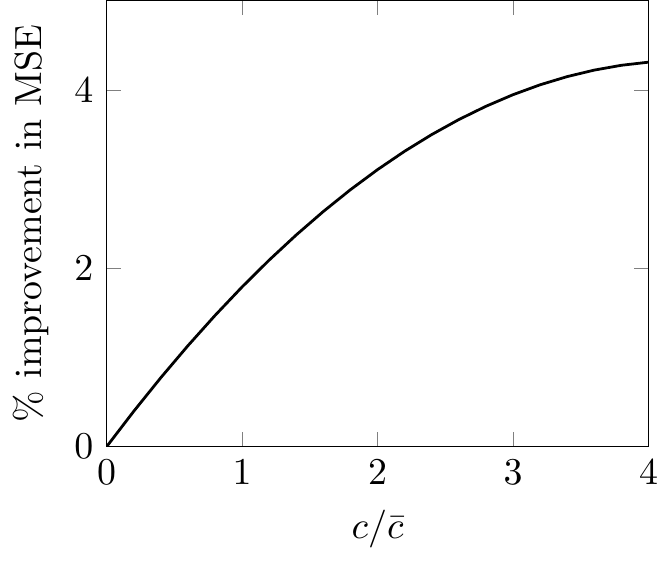}
						\caption{Percentage improvement in mean squared error of MAICc over AICc when $n=30$, $p=10$, $q=2$ and $\Sigma=I_2$. Upper left: Plot with respect to $\sigma_1(B)$ for $c=\bar{c}$, $c=2 \bar{c}$ and $c=4 \bar{c}$ when $\sigma_2(B)=0$. Upper right: Plot with respect to $\sigma_2(B)$ for $c=\bar{c}$, $c=2 \bar{c}$ and $c=4 \bar{c}$ when $\sigma_1(B)=5$. Lower left: Plot with respect to $c/\bar{c}$ for $\sigma_1(B)=\sigma_2(B)=0$ ($B=O$). Lower right: Plot with respect to $c/\bar{c}$ for $\sigma_1(B)=5$ and $\sigma_2(B)=0$.}
						\label{fig_c}
					\end{figure}
					
					Figure~\ref{fig_n} compares the performance of MAICc for different values of $n$ when $p=10$, $q=2$ and $\Sigma=I_2$.
					It indicates that the percentage improvement in MSE is maximized around $n=40$.
					Figure~\ref{fig_p} compares the performance of MAICc for different values of $p$ when $n=30$, $q=2$ and $\Sigma=I_2$.
					It indicates that the percentage improvement in MSE is smaller for larger $p$.
					Figure~\ref{fig_rq} (left) compares the performance of MAICc for different values of $r=\Sigma_{12}$ when $n=30$, $p=10$, $q=2$ and $\Sigma_{11}=\Sigma_{22}=1$.
					It indicates that the percentage improvement in MSE is largest for $r=0$ (no correlation).
					
					\begin{figure}
						\centering
						\includegraphics{./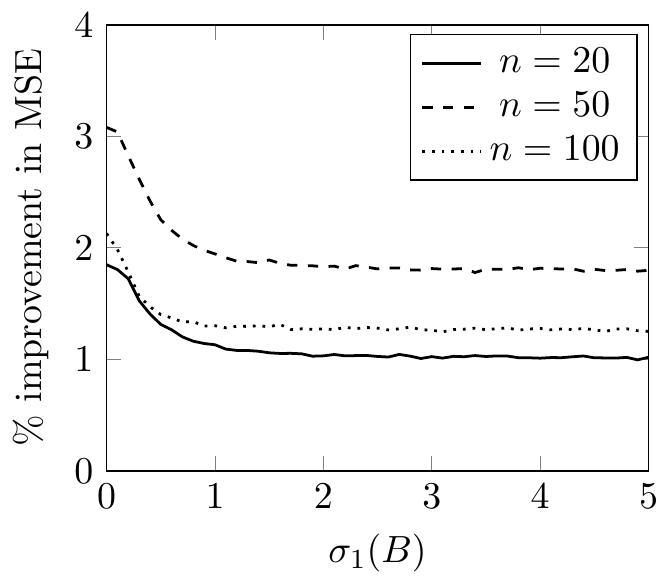}
						\includegraphics{./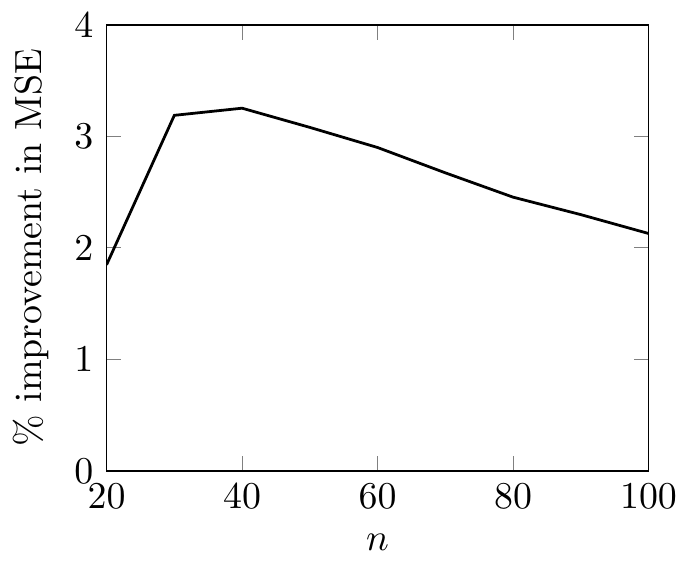}
						\caption{Percentage improvement in mean squared error of MAICc with $c=\bar{c}$ over AICc when $p=10$, $q=2$ and $\Sigma=I_2$. Left: Plot with respect to $\sigma_1(B)$ for $n=20,50,100$ when $\sigma_2(B)=0$. Right: Plot with respect to $n$ for $B=O$.}
						\label{fig_n}
					\end{figure}
					
					\begin{figure}
						\centering
						\includegraphics{./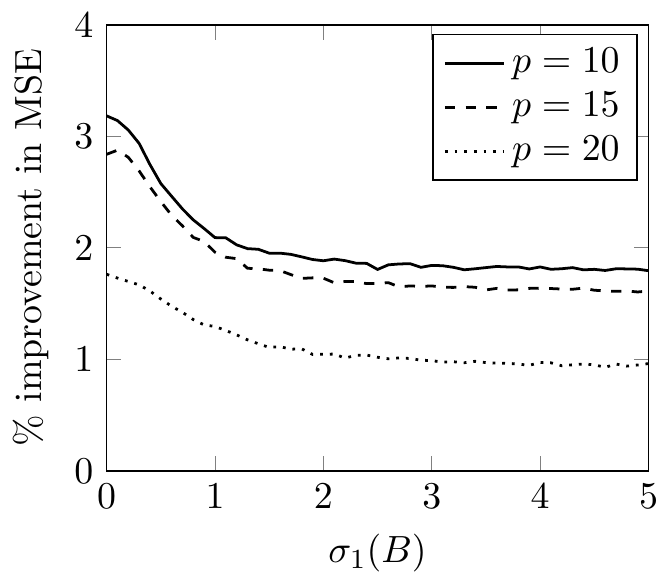}
						\includegraphics{./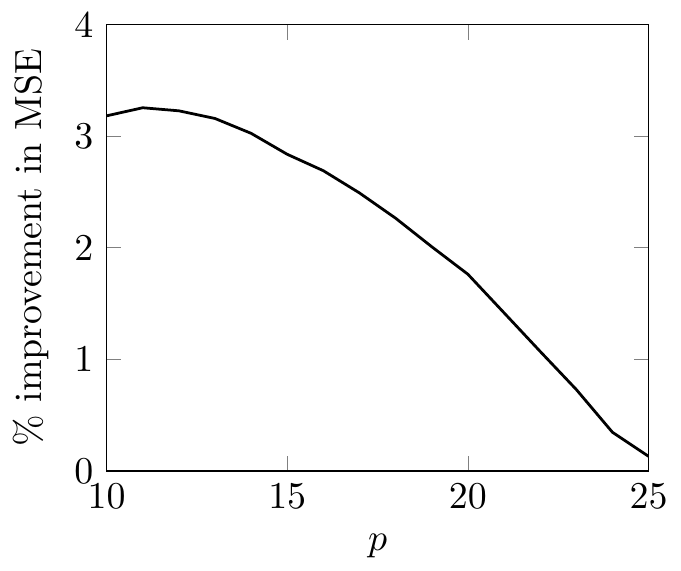}
						\caption{Percentage improvement in mean squared error of MAICc with $c=\bar{c}$ over AICc when $n=30$, $q=2$ and $\Sigma=I_2$. Left: Plot with respect to $\sigma_1(B)$ for $p=10,15,20$ when $\sigma_2(B)=0$. Right: Plot with respect to $p$ for $B=O$.}
						\label{fig_p}
					\end{figure}
					
					\begin{figure}
						\centering
						\includegraphics{./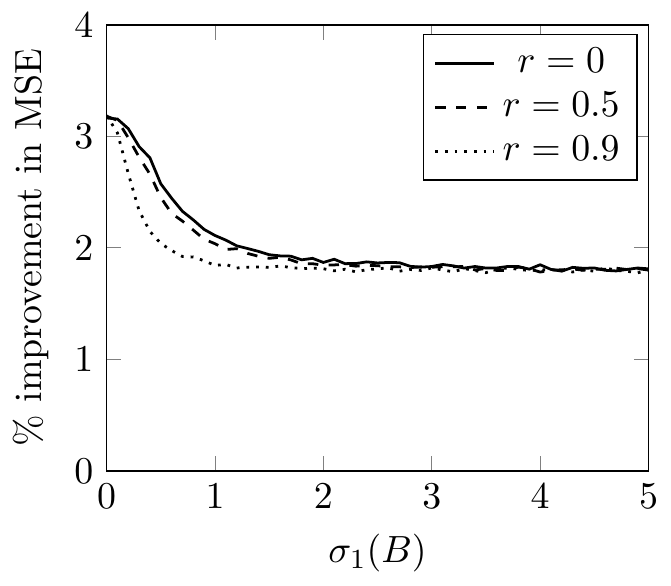}
						\includegraphics{./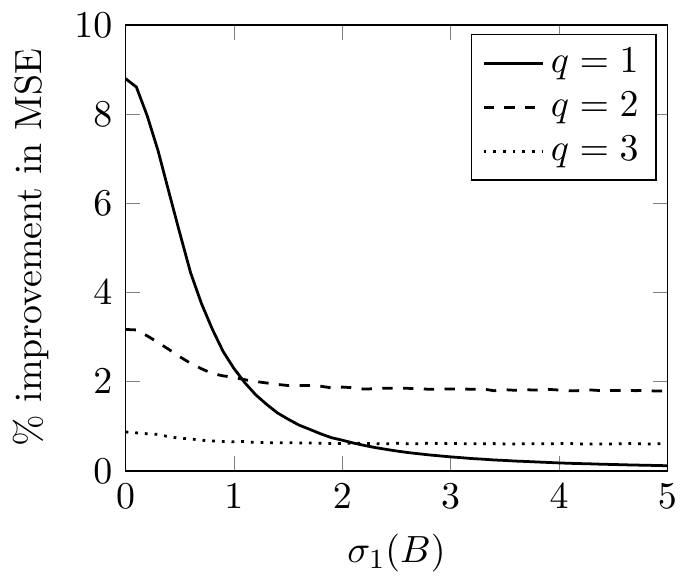}
						\caption{Left: Percentage improvement in mean squared error of MAICc with $c=\bar{c}$ over AICc when $n=30$, $p=10$, $q=2$, $\Sigma_{11}=\Sigma_{22}=1$ and $\Sigma_{12}=r$. Plot with respect to $r$. Right: Percentage improvement in mean squared error of MAICc with $c=\bar{c}$ over AICc when $n=30$, $p=10$ and $\Sigma=I_q$. Plot with respect to $\sigma_1(B)$ for $q=1,2,3$ when $\sigma_2(B)=\dots=\sigma_q(B)=0$.}
						\label{fig_rq}
					\end{figure}
					
					Finally, Figure~\ref{fig_rq} (right) compares the performance of MAICc for different values of $q$ when $n=30$, $p=10$, $\Sigma=I_q$.
					Compared to the single response case ($q=1$) in the previous subsection, the percentage improvement in MSE of MAICc over AICc is not large.
					We expect that MAICc for $q \geq 2$ can be improved in several ways.
					For example, as shown in Figure~\ref{fig_c}, MAICc with $c=\bar{c}$ is numerically dominated by MAICc with larger value of $c$.
					Thus, improving the upper bound of $c$ in Theorem~\ref{th_main} would be beneficial.
					Also, in analogy to the method of \cite{Efron76} for improving the Efron--Morris estimator by adding scalar shrinkage, MAICc is expected to be improved by adding a term of the form in Propostion~\ref{prop_johnstone} after vectorization.
					Another solution may be to use different coefficients for the singular values as in Proposition~\ref{prop_ms}.
					
					%\begin{figure}
					%	\centering
					%	\includegraphics{./figpdf/AICc_dom_fig11.pdf}
					%	\caption{Percentage improvement in mean squared error of MAICc with $c=\bar{c}$ over AICc when $n=30$, $p=10$ and $\Sigma=I_q$. Plot with respect to $\sigma_1(B)$ for $q=1,2,3$ when $\sigma_2(B)=\dots=\sigma_q(B)=0$.}
					%	\label{fig_q}
					%\end{figure}
					
					\subsection{Variable selection}
					%In linear regression, variable selection is a problem of selecting an appropriate subset of explanatory variables. %that are relevant to response variables.
					%Here, we examine the performance of MAICc in variable selection.
					%Specifically, for each subset $\gamma$ of $\{ 1,\dots,p \}$, the submodel of \eqref{fullmodel} is defined by 
					%% $M_{\gamma}$ of $M$ in \eqref{fullmodel} by
					%\begin{align}
					%	y \sim {\rm N}_{n} (X_{\gamma} \beta_{\gamma} \sigma_{\gamma}^2) \label{submodel}
					%\end{align}
					%where $\lvert \gamma \rvert$ is the cardinality of $\gamma$, $X_{\gamma} \in \mathbb{R}^{n \times \lvert \gamma \rvert}$ is the submatrix of $X$ with column indices $\gamma$, $\hat{B}_{\gamma} \in \mathbb{R}^{\lvert \gamma \rvert \times q}$ is an unknown regression coefficient matrix and $\hat{\Sigma}_{\gamma} \in \mathbb{R}^{q \times q}$ is an unknown covariance matrix.
					%Then, we are interested in selecting an appropriate subset $\gamma$ of explanatory variables based on data $(X,Y)$.
					%It is a typical model selection problem and many methods have been developed.
					% and has been one of the central research topics in linear regression models. 
					
					Here, we compare the variable selection performance of ${\rm AIC}$, ${\rm AICc}$, and ${\rm MAICc}$ with $c=\bar{c}$. 
					The experimental setting is similar to that of \cite{Hurvich}: $n=20$, $p=10$, $q=1$, $\beta=(0.1,0.2,0.3,0.4,0.5,0,0,0,0,0)^{\top}$ and $\sigma^2=1$.
					Each entry of $X$ is generated from ${\rm N}(0,1)$ independently and fixed for the whole experiment.
					Ten submodels were considered as candidate models, where the $k$-th model uses the first $k$ columns of $X$ as covariates ($k=1,\dots,10$).
					Thus, the fifth model is the truth here.
					%Namely, $\gamma^{(k)}$ consists of the first $k$ explanatory variables.
					%Therefore, $\gamma^{(5)}$ is the true model here.
					%We compare three criteria: ${\rm AIC}$ \eqref{AIC}, ${\rm AICc}$ \eqref{AICc}, and ${\rm MAICc}$ \eqref{MAICc} with $c=c^{*}$ in \eqref{opt_c}.
					For each realization of $y$, we selected the model order $k$ by minimizing ${\rm AIC}$, ${\rm AICc}$, or ${\rm MAICc}$ with $c=c^{*}$.
					Table \ref{tab_reg10} shows the frequency of the selected order in 1000 realizations.
					${\rm MAICc}$ selects the true order $k=5$ more frequently than ${\rm AIC}$ and ${\rm AICc}$.
					Thus, ${\rm MAICc}$ attains better performance in variable selection than ${\rm AIC}$ and ${\rm AICc}$.
					This result indicates a practical advantage of introducing the current loss estimation framework to investigate information criterion.
					%Table \ref{tab_reg30} shows the result for $n=30$.
					%In this case, the performance of ${\rm MAICc}$ is almost the same with that of ${\rm AICc}$ and better than that of AIC.
					%As the sample size $n$ gets large, the improvement by MAICc becomes negligible.
					
					\begin{table}[htbp]
						\centering
						\caption{Frequency of order selected by three criteria in 1000 realizations} % ($p=7, n=30$)}
					\begin{tabular}{|c|c|c|c|c|c|c|c|c|c|c|}
						\hline
						& 1 & 2 & 3 & 4 & 5 & 6 & 7 & 8 & 9 & 10 \\ \hline
						${\rm AIC}$ &  89 & 8 & 15 & 29 & 352 & 129 & 76 & 76 & 81 & 145 \\ \hline 
						${\rm AICc}$ & 277 & 147 & 37 & 16 & 460 & 44 & 15 & 4 & 0 & 0 \\ \hline 
						${\rm MAICc}$ & 248 & 137 & 34 & 14 & 492 & 54 & 17 & 4 & 0 & 0 \\ \hline 
					\end{tabular}
					\label{tab_reg10}
				\end{table}
				
				\section{Conclusion}\label{sec:concl}
				In this study, we showed that the corrected AIC is inadmissible as an estimator of the Kullback--Leibler discrepancy and provided improved loss estimators.
				%Numerical results implied that the improved loss estimator leads to improved performance in variable selection.
				%This result is similar in spirit to the well-known Stein phenomenon \cite{Stein56}, in that both demonstrate the possibility of improving unbiased estimators by introducing bias with shrinkage.
				%Such a viewpoint on information criterion and as well as model selection procedures.
				To the best of our knowledge, such a loss estimation framework has not been employed in the study of information criteria, and there are several possible directions for future research.
				For example, generalizations of the current results to out-of-sample prediction \citep{Rosset}, high-dimensional settings \citep{Bellec,Fujikoshi14,Yanagihara}, and mis-specified cases \citep{Fujikoshi,Reschenhofer} may be interesting.
				Also, whereas we focused on the Gaussian linear regression model in this study, similar results may be obtained in general models for AIC and other information criteria such as TIC and GIC \citep{Konishi08} by asymptotic arguments. 
				%because AIC is an approximately unbiased estimator of the expected Kullback--Leibler discrepancy for general models, a similar argument would provide an estimator that dominates AIC asymptotically under the quadratic loss.
				Improvement of model averaging criteria such as Mallows criterion \citep{Hansen,Wan10} is another future problem.
				Finally, whereas we focused on the plug-in predictive distribution in this study, it would be interesting to study extension to the Bayesian predictive distribution, which minimizes the Bayes risk under the Kullback--Leibler loss \citep{Aitchison}.
				For the linear regression model, \cite{Kitagawa97} derived an information criterion for the Bayesian predictive distribution and \cite{Kobayashi} studied the problem of Bayesian out-of-sample prediction.
				
	\section*{Acknowledgements}
					The author thanks the associate editor and referees for valuable comments.
					This work was supported by JSPS KAKENHI Grant Numbers 19K20220, 21H05205, 22K17865 and JST Moonshot Grant Number JPMJMS2024.

				\appendix
				
				%\section{Technical lemma}\label{sec:lemma}
				\section{Matrix derivative formulas}
				%Here, we prepare several matrix derivative formulas.
				
				\begin{lemma}\label{lem_diff}
					For $Z \in \mathbb{R}^{p \times q}$,
					\begin{align*}
						\frac{\partial}{\partial Z_{ij}} ((Z^{\top} Z)^{-1})_{kl} = - ((Z^{\top} Z)^{-1})_{kj} (Z(Z^{\top} Z)^{-1})_{il} - ((Z^{\top} Z)^{-1} Z^{\top})_{ki} ((Z^{\top} Z)^{-1})_{jl}.
					\end{align*}
				\end{lemma}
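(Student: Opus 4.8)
The plan is to prove this entrywise identity by the chain rule, differentiating through the matrix $A = Z^{\top} Z$. First I would record the two elementary ingredients. Differentiating $A A^{-1} = I$ entrywise and rearranging gives the standard formula $\partial (A^{-1})_{kl} / \partial A_{st} = -(A^{-1})_{ks} (A^{-1})_{tl}$, valid wherever $A$ is invertible, i.e. whenever $Z$ has full column rank $q$ — which is the only case relevant to the applications in the main text. Second, writing $A_{st} = \sum_m Z_{ms} Z_{mt}$, a one-line computation gives $\partial A_{st} / \partial Z_{ij} = \delta_{sj} Z_{it} + Z_{is} \delta_{tj}$ (only the $m = i$ term in the sum survives).

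Then I would combine these via the chain rule,
\[
\frac{\partial (A^{-1})_{kl}}{\partial Z_{ij}} = \sum_{s,t} \frac{\partial (A^{-1})_{kl}}{\partial A_{st}} \, \frac{\partial A_{st}}{\partial Z_{ij}} = -\sum_{s,t} (A^{-1})_{ks} (A^{-1})_{tl} \left( \delta_{sj} Z_{it} + Z_{is} \delta_{tj} \right),
\]
and split the right-hand side into two sums. In the first, summing out $s$ against $\delta_{sj}$ and then recognizing $\sum_t Z_{it} (A^{-1})_{tl} = (Z A^{-1})_{il}$ yields $-(A^{-1})_{kj} (Z A^{-1})_{il}$. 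In the second, summing out $t$ against $\delta_{tj}$ and recognizing $\sum_s (A^{-1})_{ks} Z_{is} = (A^{-1} Z^{\top})_{ki}$ yields $-(A^{-1} Z^{\top})_{ki} (A^{-1})_{jl}$. Substituting $A = Z^{\top} Z$ back into both terms produces exactly the asserted identity.

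There is no genuine obstacle here beyond bookkeeping: one simply has to keep the four free indices $(i,j,k,l)$ and the two summation indices $(s,t)$ straight and not confuse the two terms that arise from the product rule on $Z_{ms} Z_{mt}$. The only point worth a remark is the domain — the formula holds on the open set where $Z$ has rank $q$, so that $(Z^{\top} Z)^{-1}$ is well defined and smooth — which is all that the subsequent Stein-type computations in Sections~\ref{sec:ic} and~\ref{sec:main} require.
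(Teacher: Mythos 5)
Your proof is correct and follows essentially the same route as the paper's: both compute $\partial (Z^{\top}Z)_{st}/\partial Z_{ij} = \delta_{sj}Z_{it} + Z_{is}\delta_{tj}$ and then apply the identity ${\rm d}(A^{-1}) = -A^{-1}({\rm d}A)A^{-1}$ (your entrywise chain-rule formula is just this in coordinates). The remark about restricting to full-column-rank $Z$ is a sensible addition that the paper leaves implicit.
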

				\begin{proof}
					Let $\delta_{ab}$ be the Kronecker delta: $\delta_{ab}=1$ if $a=b$ and $\delta_{ab}=0$ if $a \neq b$.
					From
					\begin{align*}
						\frac{\partial}{\partial Z_{ij}} (Z^{\top} Z)_{ab} = \sum_c \frac{\partial}{\partial Z_{ij}} (Z_{ca} Z_{cb}) = \delta_{ja} Z_{ib} + \delta_{jb} Z_{ia}
					\end{align*}
					and ${\rm d} (A^{-1}) = -A^{-1} ({\rm d} A) A^{-1}$, we obtain
					\begin{align*}
						\frac{\partial}{\partial Z_{ij}} ((Z^{\top} Z)^{-1})_{kl} &= \left( -(Z^{\top} Z)^{-1} \left( \frac{\partial}{\partial Z_{ij}} (Z^{\top} Z) \right) (Z^{\top} Z)^{-1} \right)_{kl} \\
						&= - \sum_{a,b} ((Z^{\top} Z)^{-1})_{ka} \left( \frac{\partial}{\partial Z_{ij}} (Z^{\top} Z)_{ab} \right) ((Z^{\top} Z)^{-1})_{bl} \\
						&= - \sum_{a,b} ((Z^{\top} Z)^{-1})_{ka} ( \delta_{ja} Z_{ib} + \delta_{jb} Z_{ia}) ((Z^{\top} Z)^{-1})_{bl} \\
						&= - \sum_b (Z^{\top} Z)^{-1})_{kj} Z_{ib} ((Z^{\top} Z)^{-1})_{bl} - \sum_a ((Z^{\top} Z)^{-1})_{ka} Z_{ia} ((Z^{\top} Z)^{-1})_{jl} \\
						&= - ((Z^{\top} Z)^{-1})_{kj} (Z(Z^{\top} Z)^{-1})_{il} - ((Z^{\top} Z)^{-1} Z^{\top})_{ki} ((Z^{\top} Z)^{-1})_{jl}.
					\end{align*}
				\end{proof}
				
				\begin{lemma}\label{lem_diff2}
					For $Z \in \mathbb{R}^{p \times q}$ and $S \in \mathbb{R}^{q \times q}$,
					\begin{align*}
						\frac{\partial}{\partial Z_{ij}} {\rm tr} (S (Z^{\top} Z)^{-1}) = -2 (Z(Z^{\top} Z)^{-1} S (Z^{\top} Z)^{-1})_{ij}.
					\end{align*}
				\end{lemma}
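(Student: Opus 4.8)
The plan is to obtain this formula by a direct application of Lemma~\ref{lem_diff}, together with the symmetry of $(Z^{\top}Z)^{-1}$ (and of $S$; in every application in this paper $S$ is a covariance-type matrix, hence symmetric, and this symmetry is exactly what produces the factor $2$). First I would write the trace in index form, ${\rm tr}(S(Z^{\top}Z)^{-1}) = \sum_{k,l} S_{kl}\,((Z^{\top}Z)^{-1})_{lk}$, so that all the $Z$-dependence is confined to the entries of $(Z^{\top}Z)^{-1}$. Differentiating under the (finite) sum gives $\frac{\partial}{\partial Z_{ij}}{\rm tr}(S(Z^{\top}Z)^{-1}) = \sum_{k,l} S_{kl}\,\frac{\partial}{\partial Z_{ij}}((Z^{\top}Z)^{-1})_{lk}$, and I would then substitute the expression from Lemma~\ref{lem_diff} with the index pair $(l,k)$ in place of $(k,l)$. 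This yields two double sums.

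Next I would collapse each double sum into a single matrix entry. Writing $B=(Z^{\top}Z)^{-1}$ for brevity and using $B^{\top}=B$, the factor $((Z^{\top}Z)^{-1}Z^{\top})_{\cdot i}$ occurring in Lemma~\ref{lem_diff} can be rewritten as $(Z(Z^{\top}Z)^{-1})_{i\cdot}$. After that the first sum becomes $-(ZBSB)_{ij}$ and the second becomes $-(ZBS^{\top}B)_{ij}$; since $S$ is symmetric these agree, giving $-2(Z(Z^{\top}Z)^{-1}S(Z^{\top}Z)^{-1})_{ij}$ as claimed.

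There is essentially no analytic obstacle here: the whole argument is careful index bookkeeping. The only points that need attention are (i) keeping the matrix factors in the right order when converting the sums into products, so that e.g.\ $\sum_l S_{kl}B_{lj}$ is read as $(SB)_{kj}$ and not $(BS)_{kj}$, and (ii) noticing that, prior to invoking symmetry, the two contributions differ by a transpose of $S$, which is why the statement is formulated for symmetric $S$. Alternatively, one can bypass the index computation by using the differential identity ${\rm d}\,{\rm tr}(S(Z^{\top}Z)^{-1}) = -{\rm tr}\big(S(Z^{\top}Z)^{-1}\,({\rm d}(Z^{\top}Z))\,(Z^{\top}Z)^{-1}\big)$ with ${\rm d}(Z^{\top}Z) = ({\rm d}Z)^{\top}Z + Z^{\top}({\rm d}Z)$, and reading off the coefficient of ${\rm d}Z_{ij}$; the two terms from ${\rm d}(Z^{\top}Z)$ are transposes of one another under the trace and combine to the factor $2$.
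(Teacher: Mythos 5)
Your proposal is correct and follows essentially the same route as the paper: apply Lemma~\ref{lem_diff} termwise to $\sum_{k,l}S_{kl}\,\partial((Z^{\top}Z)^{-1})_{kl}/\partial Z_{ij}$ and collapse the two sums using the symmetry of $(Z^{\top}Z)^{-1}$. Your observation that the factor $2$ additionally requires $S=S^{\top}$ (so that for general $S$ the right-hand side would read $-(Z(Z^{\top}Z)^{-1}(S+S^{\top})(Z^{\top}Z)^{-1})_{ij}$) is accurate and is a hypothesis the paper leaves implicit, though it is harmless since every $S$ used in the paper is symmetric.
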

				\begin{proof}
					From Lemma~\ref{lem_diff},
					\begin{align*}
						\frac{\partial}{\partial Z_{ij}} {\rm tr} (S (Z^{\top} Z)^{-1}) &= \sum_{k,l} S_{kl}  \frac{\partial}{\partial Z_{ij}} ((Z^{\top} Z)^{-1})_{kl}  \\
						&= \sum_{k,l} S_{kl} ( -((Z^{\top} Z)^{-1})_{kj} (Z(Z^{\top} Z)^{-1})_{il} - ((Z^{\top} Z)^{-1} Z^{\top})_{ki} ((Z^{\top} Z)^{-1})_{jl}) \\
						%	&= - \sum_{k,l} (Z(Z^{\top} Z)^{-1})_{il} S_{lk}  ((Z^{\top} Z)^{-1})_{kj} - \sum_{k,l} ((Z^{\top} Z)^{-1} Z^{\top})_{ki} ((Z^{\top} Z)^{-1})_{jl}) \\
						&= -2 (Z(Z^{\top} Z)^{-1} S (Z^{\top} Z)^{-1})_{ij}.
					\end{align*}
				\end{proof}
				
				\begin{lemma}\label{lem_diff3}
					For $Z \in \mathbb{R}^{p \times q}$ and $A,B \in \mathbb{R}^{q \times q}$,
					\begin{align*}
						&\sum_{ij} \frac{\partial}{\partial Z_{ij}} (Z (Z^{\top} Z)^{-1} A (Z^{\top} Z)^{-1} B)_{ij} \\
						=& (p-q-2) {\rm tr} ((Z^{\top} Z)^{-1} A (Z^{\top} Z)^{-1} B) - {\rm tr}  (A (Z^{\top} Z)^{-1}) {\rm tr}  ((Z^{\top} Z)^{-1} B).
					\end{align*}
				\end{lemma}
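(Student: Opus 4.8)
\emph{Proof proposal.} The plan is a direct coordinate computation. Write $G=(Z^{\top} Z)^{-1}$ and $M=GAGB\in\mathbb{R}^{q\times q}$, so that we must evaluate $\sum_{ij}\partial_{Z_{ij}}(ZM)_{ij}$. Since $(ZM)_{ij}=\sum_k Z_{ik}M_{kj}$ and $\partial Z_{ik}/\partial Z_{ij}=\delta_{kj}$, the product rule gives $\partial_{Z_{ij}}(ZM)_{ij}=M_{jj}+\sum_k Z_{ik}\,\partial_{Z_{ij}}M_{kj}$; summing over $i$ and $j$ yields $p\,{\rm tr}(M)+\sum_{ijk}Z_{ik}\,\partial_{Z_{ij}}M_{kj}$, where the first term is already $p\,{\rm tr}(GAGB)$, so everything comes down to the second one.

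For that remaining sum, apply the product rule once more to $M_{kj}=\sum_{l,m,n}G_{kl}A_{lm}G_{mn}B_{nj}$, separating the contribution in which $\partial_{Z_{ij}}$ differentiates the first inverse factor $G_{kl}$ from the one in which it differentiates $G_{mn}$, and substitute Lemma~\ref{lem_diff} for each $\partial_{Z_{ij}}((Z^{\top} Z)^{-1})_{\bullet\bullet}$; since that formula is itself a sum of two terms, one obtains four six-fold index sums. Each of these contracts cleanly against the prefactor $Z_{ik}$ by means of $(Z^{\top} Z)(Z^{\top} Z)^{-1}=I$, namely $\sum_i Z_{ik}(ZG)_{il}=\delta_{kl}$ and $\sum_i Z_{ik}(GZ^{\top})_{mi}=\delta_{mk}$ (in particular $\sum_i Z_{ik}(GZ^{\top})_{ki}=1$), so after eliminating the index $i$ each term collapses to a genuine trace: two of them equal $-{\rm tr}(GAGB)$ and $-q\,{\rm tr}(GAGB)$, one equals $-{\rm tr}(GA^{\top}GB)=-{\rm tr}(GAGB)$ (using symmetry of $G$ together with symmetry of $A$ or of $B$, which holds in every application of the lemma), and the last equals $-{\rm tr}(AG)\,{\rm tr}(GB)$. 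Adding back the $p\,{\rm tr}(GAGB)$ from the first step produces exactly $(p-q-2)\,{\rm tr}((Z^{\top} Z)^{-1}A(Z^{\top} Z)^{-1}B)-{\rm tr}(A(Z^{\top} Z)^{-1})\,{\rm tr}((Z^{\top} Z)^{-1}B)$.

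The only real obstacle I anticipate is bookkeeping: correctly expanding the four index sums coming from Lemma~\ref{lem_diff} and carrying out the $i$-contractions without sign, index-name, or transpose slips. Conceptually nothing is needed beyond Lemma~\ref{lem_diff} and the identity $(Z^{\top} Z)(Z^{\top} Z)^{-1}=I$, and Lemma~\ref{lem_diff2}, which is essentially the present computation packaged as a single gradient, can be used as a consistency check on the individual pieces.
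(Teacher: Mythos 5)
Your proposal follows the paper's proof essentially verbatim: product rule on the three factors of $(Z(Z^{\top}Z)^{-1}A(Z^{\top}Z)^{-1}B)_{ij}$, Lemma~\ref{lem_diff} applied to each derivative of $(Z^{\top}Z)^{-1}$, and contraction of the resulting four sums via $(Z^{\top}Z)^{-1}Z^{\top}Z=I_q$ and ${\rm tr}(Z(Z^{\top}Z)^{-1}Z^{\top})=q$, producing exactly the same five terms. Your explicit remark that the term $-{\rm tr}((Z^{\top}Z)^{-1}A^{\top}(Z^{\top}Z)^{-1}B)$ only merges with $-{\rm tr}((Z^{\top}Z)^{-1}A(Z^{\top}Z)^{-1}B)$ when $A$ or $B$ is symmetric is correct and flags a point the paper's final step passes over silently (harmless here, since every application of the lemma uses symmetric $A$ and $B$).
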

				\begin{proof}
					From Lemma~\ref{lem_diff},
					\begin{align*}
						&\frac{\partial}{\partial Z_{ij}} (Z (Z^{\top} Z)^{-1} A (Z^{\top} Z)^{-1} B)_{ij} \\
						=& \sum_k \left( \frac{\partial}{\partial Z_{ij}} Z_{ik} \right) ((Z^{\top} Z)^{-1} A (Z^{\top} Z)^{-1} B)_{kj} + \sum_{k,l} Z_{ik} \left( \frac{\partial}{\partial Z_{ij}}  ((Z^{\top} Z)^{-1})_{kl} \right) (A (Z^{\top} Z)^{-1} B)_{lj} \\ 
						& \quad + \sum_{k,l} (Z (Z^{\top} Z)^{-1} A)_{ik} \left( \frac{\partial}{\partial Z_{ij}} ((Z^{\top} Z)^{-1})_{kl} \right) B_{lj} \\
						=& \sum_k \delta_{jk} ((Z^{\top} Z)^{-1} A (Z^{\top} Z)^{-1} B)_{kj} \\
						& \quad + \sum_{k,l} Z_{ik} ( - ((Z^{\top} Z)^{-1})_{kj} (Z(Z^{\top} Z)^{-1})_{il} - ((Z^{\top} Z)^{-1} Z^{\top})_{ki} ((Z^{\top} Z)^{-1})_{jl} ) (A (Z^{\top} Z)^{-1} B)_{lj} \\ 
						& \quad + \sum_{k,l} (Z (Z^{\top} Z)^{-1} A)_{ik} ( - ((Z^{\top} Z)^{-1})_{kj} (Z(Z^{\top} Z)^{-1})_{il} - ((Z^{\top} Z)^{-1} Z^{\top})_{ki} ((Z^{\top} Z)^{-1})_{jl} ) B_{lj} \\
						=& ((Z^{\top} Z)^{-1} A (Z^{\top} Z)^{-1} B)_{jj} - (Z (Z^{\top} Z)^{-1})_{ij} (Z (Z^{\top} Z)^{-1} A (Z^{\top} Z)^{-1} B)_{ij} \\
						& \quad - (Z (Z^{\top} Z)^{-1} Z^{\top})_{ii} ((Z^{\top} Z)^{-1} A (Z^{\top} Z)^{-1} B)_{jj} - (Z (Z^{\top} Z)^{-1} A (Z^{\top} Z)^{-1})_{ij} (Z (Z^{\top} Z)^{-1} B)_{ij} \\
						& \quad - (Z (Z^{\top} Z)^{-1} A (Z^{\top} Z)^{-1} Z^{\top})_{ii} ((Z^{\top} Z)^{-1} B)_{jj}.
					\end{align*}
					Therefore,
					\begin{align*}
						&\sum_{ij} \frac{\partial}{\partial Z_{ij}} (Z (Z^{\top} Z)^{-1} A (Z^{\top} Z)^{-1} B)_{ij} \\
						=& p {\rm tr} ((Z^{\top} Z)^{-1} A (Z^{\top} Z)^{-1} B) - {\rm tr} ((Z^{\top} Z)^{-1} Z^{\top} \cdot Z (Z^{\top} Z)^{-1} A (Z^{\top} Z)^{-1} B) \\
						& \quad - {\rm tr} (Z (Z^{\top} Z)^{-1} Z^{\top}) {\rm tr}  ((Z^{\top} Z)^{-1} A (Z^{\top} Z)^{-1} B) - {\rm tr}  (Z (Z^{\top} Z)^{-1} A (Z^{\top} Z)^{-1} \cdot B^{\top} (Z^{\top} Z)^{-1} Z^{\top}) \\
						& \quad - {\rm tr}  (Z (Z^{\top} Z)^{-1} A (Z^{\top} Z)^{-1} Z^{\top}) {\rm tr}  ((Z^{\top} Z)^{-1} B) \\
						=& p {\rm tr} ((Z^{\top} Z)^{-1} A (Z^{\top} Z)^{-1} B) - {\rm tr} ((Z^{\top} Z)^{-1} A (Z^{\top} Z)^{-1} B) - q {\rm tr}  ((Z^{\top} Z)^{-1} A (Z^{\top} Z)^{-1} B) \\
						& \quad - {\rm tr}  (A (Z^{\top} Z)^{-1} \cdot B^{\top} (Z^{\top} Z)^{-1}) - {\rm tr}  (A (Z^{\top} Z)^{-1}) {\rm tr}  ((Z^{\top} Z)^{-1} B) \\
						=& (p-q-2) {\rm tr} ((Z^{\top} Z)^{-1} A (Z^{\top} Z)^{-1} B) - {\rm tr}  (A (Z^{\top} Z)^{-1}) {\rm tr}  ((Z^{\top} Z)^{-1} B).
					\end{align*}
				\end{proof}
				
				\section{Expectation formulas}
				%We also prepare several formulas for expected values related to the matrix-variate normal and Wishart distributions.
				%We denote the $q$-dimensional Wishart distribution with $d$ degrees of freedom and scale $\Sigma \succ O$ by $W_q(d,\Sigma)$.
				
				%First, Stein's lemma \cite{Fourdrinier} is straightforwardly extended to the matrix-variate normal distributions as follows.
				%See \cite{Fourdrinier,Matsuda} for its precise regularity conditions.
				
				\begin{lemma}\citep{Stein74}\label{lem_stein}
					If $Z \sim {\rm N}_{p,q}(O,I_p,I_q)$ and $g: \mathbb{R}^{p \times q} \to \mathbb{R}^{p \times q}$ is absolutely continuous, then
					\begin{align*}
						{\rm E} [{\rm tr} (Z^{\top} g(Z))] = {\rm E} \left[ \sum_{i,j} \frac{\partial g_{ij}}{\partial Z_{ij}} (Z) \right].
					\end{align*}
				\end{lemma}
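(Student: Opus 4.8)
The plan is to reduce the matrix identity to the one-dimensional Stein identity applied coordinatewise, exploiting the fact that $Z \sim {\rm N}_{p,q}(O,I_p,I_q)$ has entries $Z_{ij}$ that are mutually independent, each distributed as ${\rm N}(0,1)$. First I would expand the trace as ${\rm tr}(Z^{\top} g(Z)) = \sum_{i,j} Z_{ij} g_{ij}(Z)$, so that by linearity of expectation it suffices to prove, for each fixed pair $(i,j)$, that
\begin{align*}
	{\rm E}[Z_{ij}\, g_{ij}(Z)] = {\rm E}\left[ \frac{\partial g_{ij}}{\partial Z_{ij}}(Z) \right].
\end{align*}
Summing these scalar identities over all $(i,j)$ then reassembles the claimed matrix identity.

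Next, for a fixed pair $(i,j)$ I would condition on the collection of all remaining entries $Z_{-ij}=(Z_{kl})_{(k,l)\neq(i,j)}$. Conditionally on $Z_{-ij}$, the entry $Z_{ij}$ is still distributed as ${\rm N}(0,1)$ by the independence of the entries, and the map $w \mapsto g_{ij}(Z)\big|_{Z_{ij}=w}$ is an absolutely continuous function of the single real variable $w$ with derivative $\partial g_{ij}/\partial Z_{ij}$. Applying the classical univariate Stein identity, ${\rm E}[W f(W)]={\rm E}[f'(W)]$ for $W\sim{\rm N}(0,1)$, in this conditional one-dimensional problem yields ${\rm E}[Z_{ij}\,g_{ij}(Z)\mid Z_{-ij}]={\rm E}[\partial g_{ij}/\partial Z_{ij}(Z)\mid Z_{-ij}]$. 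Taking the outer expectation over $Z_{-ij}$ and invoking the tower property recovers the scalar identity above.

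The main work lies in the regularity justification underlying the univariate step, which I would establish by integration by parts against the standard normal density $\varphi$. Using the relation $w\,\varphi(w)=-\varphi'(w)$, one writes $\int w f(w)\varphi(w)\,\mathrm{d}w = -\int f(w)\varphi'(w)\,\mathrm{d}w$ and integrates by parts to obtain $\int f'(w)\varphi(w)\,\mathrm{d}w$ together with the boundary term $[f(w)\varphi(w)]_{-\infty}^{\infty}$. The hard part is showing that this boundary term vanishes and that the integrals converge: the absolute continuity hypothesis supplies a well-defined derivative almost everywhere, while an implicit integrability condition ensuring ${\rm E}[|\partial g_{ij}/\partial Z_{ij}(Z)|]<\infty$ controls the growth of $g_{ij}$ relative to the Gaussian tail, forcing the boundary contribution to zero. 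Finally, Fubini's theorem is needed to interchange the conditional expectation with the integration-by-parts limits, again justified by these integrability assumptions.
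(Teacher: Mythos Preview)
The paper does not supply its own proof of this lemma; it is stated with a citation to \cite{Stein74} and used as a black box. Your argument is the standard derivation: expand ${\rm tr}(Z^{\top} g(Z))=\sum_{i,j} Z_{ij}\,g_{ij}(Z)$, fix a coordinate, condition on the remaining entries (which leaves $Z_{ij}\sim{\rm N}(0,1)$ independent of them), and apply the univariate Stein identity via integration by parts using $w\varphi(w)=-\varphi'(w)$. This is correct and is essentially how Stein's original proof proceeds. Your observation that an integrability condition such as ${\rm E}\big[|\partial g_{ij}/\partial Z_{ij}(Z)|\big]<\infty$ is implicitly required is accurate---the lemma as stated in the paper, like Stein's original, suppresses this hypothesis, but it is needed both for the boundary term to vanish and for the expectations on either side to be finite.
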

				
				%Next, we summarize necessary formulas for the Wishart distribution.
				
				\begin{lemma}\cite[Theorem~3.3.15 (iv)]{Gupta}\label{lem_gupta3}
					If $d-q \geq 0$, $S \sim W_q(d,\Sigma)$ and $A,B \in \mathbb{R}^{q \times q}$, then
					\begin{align*}
						{\rm E} [{\rm tr} (AS) {\rm tr} (BS)] = d {\rm tr} (A \Sigma B \Sigma) + d {\rm tr} (A^{\top} \Sigma B \Sigma) + d^2 {\rm tr} (A \Sigma) {\rm tr} (B \Sigma).
					\end{align*}
				\end{lemma}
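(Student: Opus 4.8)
The plan is to bypass Wishart-specific machinery entirely and reduce the identity to the fourth-moment structure of a single Gaussian vector via the outer-product representation of a Wishart matrix. I would write $S = \sum_{k=1}^d z_k z_k^{\top}$ with $z_1,\dots,z_d \sim {\rm N}_q(0,\Sigma)$ independent, so that ${\rm tr}(AS) = \sum_k z_k^{\top} A z_k$ and ${\rm tr}(BS) = \sum_l z_l^{\top} B z_l$. This gives
\begin{align*}
	{\rm E} [{\rm tr}(AS)\,{\rm tr}(BS)] = \sum_{k,l} {\rm E}[(z_k^{\top} A z_k)(z_l^{\top} B z_l)],
\end{align*}
and the double sum splits into the $d(d-1)$ off-diagonal pairs ($k \neq l$) and the $d$ diagonal terms ($k=l$).

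For $k \neq l$, independence and the first-moment identity ${\rm E}[z^{\top} A z] = {\rm tr}(A\,{\rm E}[zz^{\top}]) = {\rm tr}(A\Sigma)$ give ${\rm E}[(z_k^{\top} A z_k)(z_l^{\top} B z_l)] = {\rm tr}(A\Sigma)\,{\rm tr}(B\Sigma)$, so the off-diagonal part contributes $d(d-1)\,{\rm tr}(A\Sigma)\,{\rm tr}(B\Sigma)$. The core of the argument is therefore the diagonal (single-vector) fourth moment. For $z \sim {\rm N}_q(0,\Sigma)$ I would expand $(z^{\top} A z)(z^{\top} B z) = \sum_{i,j,k,l} A_{ij} B_{kl}\, z_i z_j z_k z_l$ and apply Isserlis' theorem, ${\rm E}[z_i z_j z_k z_l] = \Sigma_{ij}\Sigma_{kl} + \Sigma_{ik}\Sigma_{jl} + \Sigma_{il}\Sigma_{jk}$. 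Contracting each pairing against $A_{ij}B_{kl}$ and using symmetry of $\Sigma$, the three pairings become, respectively, ${\rm tr}(A\Sigma)\,{\rm tr}(B\Sigma)$, then ${\rm tr}(A^{\top}\Sigma B\Sigma)$, then ${\rm tr}(A\Sigma B\Sigma)$, yielding
\begin{align*}
	{\rm E}[(z^{\top} A z)(z^{\top} B z)] = {\rm tr}(A\Sigma)\,{\rm tr}(B\Sigma) + {\rm tr}(A^{\top}\Sigma B\Sigma) + {\rm tr}(A\Sigma B\Sigma).
\end{align*}

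Adding the diagonal contribution ($d$ times the display above) to the off-diagonal contribution, the coefficient of ${\rm tr}(A\Sigma)\,{\rm tr}(B\Sigma)$ becomes $d + d(d-1) = d^2$, while ${\rm tr}(A^{\top}\Sigma B\Sigma)$ and ${\rm tr}(A\Sigma B\Sigma)$ each carry coefficient $d$, which is precisely the claimed identity. The only delicate step is the index bookkeeping in the fourth moment: one must match each Wick contraction to the correct trace and track which pairing produces the transpose $A^{\top}$. Concretely, the second pairing first presents itself as ${\rm tr}(A\Sigma B^{\top}\Sigma)$, which I would rewrite as ${\rm tr}(A^{\top}\Sigma B\Sigma)$ using cyclicity of the trace together with $\Sigma^{\top}=\Sigma$. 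I expect this transpose-tracking to be the main obstacle, though it is purely mechanical. Note that no integrability concern arises, since all Gaussian moments are finite; the hypothesis $d - q \geq 0$ merely guarantees that $S$ has the stated Wishart law, and the outer-product computation above in fact remains valid for any integer $d \geq 1$.
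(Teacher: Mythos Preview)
Your argument is correct. The paper does not actually prove this lemma; it merely cites \cite[Theorem~3.3.15 (iv)]{Gupta}, so there is no ``paper's proof'' to compare against. Your route via the outer-product representation $S=\sum_{k=1}^d z_k z_k^{\top}$ and Isserlis' theorem for the Gaussian fourth moment is a standard and clean way to obtain this identity, and every step checks out (including the transpose bookkeeping: the pairing $\Sigma_{ik}\Sigma_{jl}$ indeed contracts to ${\rm tr}(A\Sigma B^{\top}\Sigma)={\rm tr}(A^{\top}\Sigma B\Sigma)$, while $\Sigma_{il}\Sigma_{jk}$ gives ${\rm tr}(A\Sigma B\Sigma)$). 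Your closing remark is also accurate: the representation-based computation is valid for any integer $d\geq 1$, and the hypothesis $d\geq q$ plays no role in the moment identity itself.
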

				
				\begin{lemma}\cite[Theorem~3.3.16 (i)]{Gupta}\label{lem_winv}
					If $d-q-1>0$ and $S \sim W_q(d,\Sigma)$, then
					\begin{align*}
						{\rm E} [S^{-1}] = \frac{1}{d-q-1} \Sigma^{-1}.
					\end{align*}
				\end{lemma}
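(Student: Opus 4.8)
The plan is to reduce the statement to the standardized case $\Sigma = I_q$, exploit the orthogonal invariance of the standard Wishart distribution to collapse the matrix expectation to a single scalar, and then evaluate that scalar through the chi-squared law of a Schur complement. This keeps the computation entirely elementary and isolates the one classical distributional fact that is needed.

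First I would standardize. Since $\Sigma \succ O$, set $W = \Sigma^{-1/2} S \Sigma^{-1/2}$, so that $W \sim W_q(d, I_q)$ by the affine-transformation property of the Wishart distribution, and note $W$ is almost surely invertible because $d - q - 1 > 0$ forces $d \geq q+2$. Since $S^{-1} = \Sigma^{-1/2} W^{-1} \Sigma^{-1/2}$, we have ${\rm E}[S^{-1}] = \Sigma^{-1/2}\, {\rm E}[W^{-1}]\, \Sigma^{-1/2}$, so it suffices to prove ${\rm E}[W^{-1}] = (d-q-1)^{-1} I_q$. For this I would invoke invariance: for any orthogonal $O \in \mathbb{R}^{q \times q}$ we have $O W O^{\top} \sim W_q(d, I_q)$, hence ${\rm E}[W^{-1}] = {\rm E}[(O W O^{\top})^{-1}] = O\, {\rm E}[W^{-1}]\, O^{\top}$ for every $O$. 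A symmetric matrix fixed by conjugation by all orthogonal matrices must be a scalar multiple of the identity, so ${\rm E}[W^{-1}] = c\, I_q$ with $c = {\rm E}[(W^{-1})_{11}]$.

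It then remains only to identify the scalar $c$. Partitioning $W$ with a scalar leading block, $W = \begin{pmatrix} W_{11} & w_{12}^{\top} \\ w_{12} & W_{22} \end{pmatrix}$, the Schur-complement identity gives $(W^{-1})_{11} = (W_{11} - w_{12}^{\top} W_{22}^{-1} w_{12})^{-1}$. By the standard distribution theory of the Wishart matrix \citep{Anderson}, the Schur complement satisfies $W_{11} - w_{12}^{\top} W_{22}^{-1} w_{12} \sim \chi^2_{d-q+1}$. Since ${\rm E}[1/U] = 1/(k-2)$ for $U \sim \chi^2_k$ whenever $k > 2$, and the condition $k = d-q+1 > 2$ is exactly the hypothesis $d-q-1>0$, we obtain $c = 1/(d-q-1)$, and therefore ${\rm E}[S^{-1}] = (d-q-1)^{-1} \Sigma^{-1}$.

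The main obstacle is supplying the chi-squared law of the Schur complement, together with the fact that ${\rm E}[(W^{-1})_{11}]$ is finite precisely when $d-q-1>0$; this is classical but is the single nontrivial input of the argument. An alternative that sidesteps it is to compute ${\rm E}[{\rm tr}(W^{-1})]$ directly from the Wishart density by recognizing that integrating $|W|^{(d-q-1)/2}\,{\rm tr}(W^{-1})\,\exp(-{\rm tr}(W)/2)$ against the standard Wishart normalizing constant reduces to one with shifted degrees of freedom $d-2$, again yielding $q/(d-q-1)$ and hence $c = 1/(d-q-1)$.
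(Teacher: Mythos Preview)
Your argument is correct. The paper does not prove this lemma at all; it merely cites it as Theorem~3.3.16(i) of \cite{Gupta}. Your route---standardize to $\Sigma=I_q$, use orthogonal invariance to reduce ${\rm E}[W^{-1}]$ to a scalar multiple of $I_q$, and identify that scalar via the $\chi^2_{d-q+1}$ law of the $1\times1$ Schur complement---is a standard and complete derivation. The only step worth flagging is that you should also confirm ${\rm E}[(W^{-1})_{11}]<\infty$ so that the invariance argument applies to a well-defined matrix, but this is exactly what the $\chi^2$ computation delivers under $d-q-1>0$.
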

				
				\begin{lemma}\cite[Theorem~3.3.16 (iii)]{Gupta}\label{lem_gupta}
					If $d-q-3>0$, $S \sim W_q(d,\Sigma)$ and $A \in \mathbb{R}^{q \times q}$ is positive semidefinite, then
					\begin{align*}
						{\rm E} [S^{-1} A S^{-1}] = \frac{{\rm tr} (\Sigma^{-1} A)}{(d-q)(d-q-1)(d-q-3)}  \Sigma^{-1} + \frac{1}{(d-q)(d-q-3)} \Sigma^{-1} A \Sigma^{-1}.
					\end{align*}
					In particular, when $\Sigma=A=I_q$,
					\begin{align*}
						{\rm E} [S^{-2}] = \frac{d-1}{(d-q)(d-q-1)(d-q-3)} I_q.
					\end{align*}
				\end{lemma}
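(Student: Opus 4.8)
The plan is to give a self-contained derivation (the statement is classical, \cite{Gupta}, but the tools in the Appendix suffice). First I would remove the scale matrix: writing $S = \Sigma^{1/2} T \Sigma^{1/2}$ with $T \sim W_q(d, I_q)$ gives $S^{-1} = \Sigma^{-1/2} T^{-1} \Sigma^{-1/2}$, so that ${\rm E}[S^{-1} A S^{-1}] = \Sigma^{-1/2} \, {\rm E}[T^{-1} \tilde{A} T^{-1}] \, \Sigma^{-1/2}$ with $\tilde{A} = \Sigma^{-1/2} A \Sigma^{-1/2}$ and ${\rm tr}(\tilde{A}) = {\rm tr}(\Sigma^{-1} A)$. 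Hence it suffices to evaluate the linear map $\Phi(B) := {\rm E}[T^{-1} B T^{-1}]$ for symmetric $B$. Setting $d = p$, I realize $T = Z^{\top} Z$ with $Z \sim {\rm N}_{p,q}(O, I_p, I_q)$, which puts Stein's identity (Lemma~\ref{lem_stein}) and the matrix-derivative formulas (Lemmas~\ref{lem_diff}--\ref{lem_diff3}) at my disposal. I also introduce the auxiliary map $\Psi(B) := {\rm E}[T^{-1} \, {\rm tr}(B T^{-1})]$.

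The core idea is to produce two matrix-valued identities relating $\Phi$ and $\Psi$ by applying Stein's identity to two cleverly chosen vector fields. For the first, I would take $g(Z) = Z(Z^{\top} Z)^{-1} A (Z^{\top} Z)^{-1} B$. Then ${\rm tr}(Z^{\top} g) = {\rm tr}(A (Z^{\top} Z)^{-1} B)$, whose expectation is $(d-q-1)^{-1} {\rm tr}(AB)$ by Lemma~\ref{lem_winv}, while the divergence $\sum_{ij} \partial g_{ij}/\partial Z_{ij}$ is supplied directly by Lemma~\ref{lem_diff3}. Since the resulting scalar identity holds for every symmetric $A$, I strip off $A$ (both sides are ${\rm tr}(A\,\cdot)$ with symmetric arguments) to obtain the matrix equation
\[
\frac{1}{d-q-1} B = (d-q-2)\,\Phi(B) - \Psi(B).
\]

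For the second identity I would take $g(Z) = Z(Z^{\top} Z)^{-1} A \, {\rm tr}(B (Z^{\top} Z)^{-1})$, a scalar multiple of a matrix field. Here ${\rm tr}(Z^{\top} g) = {\rm tr}(A)\,{\rm tr}(B(Z^\top Z)^{-1})$, with expectation $(d-q-1)^{-1} {\rm tr}(A){\rm tr}(B)$. The divergence splits by the product rule: the factor $\sum_{ij}\partial (Z(Z^{\top}Z)^{-1}A)_{ij}/\partial Z_{ij} = (d-q-1)\,{\rm tr}((Z^{\top}Z)^{-1}A)$ (a short computation from Lemma~\ref{lem_diff}), and the derivative of the scalar ${\rm tr}(B(Z^{\top}Z)^{-1})$ from Lemma~\ref{lem_diff2} contributes a term $-2\,{\rm tr}(A\,T^{-1}BT^{-1})$. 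Stripping $A$ again yields
\[
\frac{1}{d-q-1}\,{\rm tr}(B)\, I_q = (d-q-1)\,\Psi(B) - 2\,\Phi(B).
\]
Eliminating $\Psi$ and using $(d-q-1)(d-q-2)-2 = (d-q)(d-q-3)$ solves for $\Phi(B)$, giving $\Phi(B) = \{B + (d-q-1)^{-1}{\rm tr}(B) I_q\}/\{(d-q)(d-q-3)\}$; conjugating back by $\Sigma^{-1/2}$ and replacing $B$ by $\tilde A$ recovers the stated formula, and specializing $A = \Sigma = I_q$ with ${\rm tr}(I_q)=q$ collapses the two coefficients into $(d-1)/\{(d-q)(d-q-1)(d-q-3)\}$.

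I expect the main obstacle to be twofold and essentially technical. First, the one divergence $\sum_{ij}\partial(Z(Z^{\top}Z)^{-1}A)_{ij}/\partial Z_{ij}$ is not one of the pre-packaged lemmas, so it must be derived by hand from Lemma~\ref{lem_diff} (the cross terms telescope using $(Z(Z^\top Z)^{-1})^\top Z (Z^\top Z)^{-1} = (Z^\top Z)^{-1}$ and ${\rm tr}(Z(Z^\top Z)^{-1}Z^\top) = q$). Second, I must justify that Stein's identity applies, i.e.\ that each $g$ is absolutely continuous with integrable divergence and no boundary contribution; this is exactly where the hypothesis $d-q-3>0$ enters, since it guarantees existence of the second inverse moment ${\rm E}[{\rm tr}((Z^\top Z)^{-2})]$ and hence of every term above, while $Z^\top Z$ is invertible almost surely for $p \ge q$.
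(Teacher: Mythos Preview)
Your derivation is correct. The two Stein identities you set up do yield the linear system
\[
(d-q-1)^{-1}B=(d-q-2)\Phi(B)-\Psi(B),\qquad (d-q-1)^{-1}\operatorname{tr}(B)\,I_q=(d-q-1)\Psi(B)-2\Phi(B),
\]
and the elimination via $(d-q-1)(d-q-2)-2=(d-q)(d-q-3)$ gives exactly the stated formula after conjugating back by $\Sigma^{-1/2}$. The auxiliary divergence $\sum_{ij}\partial(Z(Z^\top Z)^{-1}A)_{ij}/\partial Z_{ij}=(d-q-1)\operatorname{tr}((Z^\top Z)^{-1}A)$ follows from Lemma~\ref{lem_diff} by the same computation as in Lemma~\ref{lem_diff3} with one fewer $(Z^\top Z)^{-1}$ factor, and the integrability needed for Stein's identity is indeed governed by the finiteness of $\mathrm{E}[\operatorname{tr}((Z^\top Z)^{-2})]$, which holds precisely when $d-q-3>0$.

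The paper itself gives no proof of this lemma: it is stated as a quotation of \cite[Theorem~3.3.16(iii)]{Gupta} and used as a black box. Your approach therefore differs in kind---you supply a self-contained argument using only Stein's identity and the matrix-derivative lemmas already in the Appendix, whereas the paper outsources the result to a standard reference. The benefit of your route is that it keeps the paper internally closed and shows that the Wishart inverse-moment formula is of the same flavour as the other computations in the Appendix; the cost is the extra page of calculus. A minor remark: your argument only needs $A$ (equivalently $\tilde A$) symmetric, not positive semidefinite, so you in fact prove a slightly stronger statement than the one quoted.
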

				
				\begin{lemma}\cite[Theorem~3.3.17 (ii)]{Gupta}\label{lem_gupta0}
					If $d-q-3>0$ and $S \sim W_q(d,\Sigma)$, then
					\begin{align*}
						{\rm E} [{\rm tr} (S^{-1}) S^{-1}] = \frac{d-q-2}{(d-q)(d-q-1)(d-q-3)} {\rm tr} (\Sigma^{-1}) \Sigma^{-1} + \frac{2}{(d-q)(d-q-1)(d-q-3)} \Sigma^{-2}.
					\end{align*}
					In particular, when $\Sigma=I_q$,
					\begin{align*}
						{\rm E} [{\rm tr} (S^{-1}) S^{-1}] = \frac{q(d-q-2)+2}{(d-q)(d-q-1)(d-q-3)} I_q.
					\end{align*}
				\end{lemma}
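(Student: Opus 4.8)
The plan is to reduce the matrix ${\rm E}[{\rm tr}(S^{-1})S^{-1}]$ to the full second-moment tensor of the entries of $S^{-1}$, and then to pin down that tensor using the already available formula for ${\rm E}[S^{-1}AS^{-1}]$ in Lemma~\ref{lem_gupta}. Writing $V=S^{-1}$ and $\Psi=\Sigma^{-1}$, the $(i,j)$ entry of the target matrix is
\begin{align*}
	({\rm E}[{\rm tr}(S^{-1})S^{-1}])_{ij} = \sum_a {\rm E}[V_{aa}V_{ij}],
\end{align*}
so everything follows once ${\rm E}[V_{ij}V_{kl}]$ is known for all indices. Here the moment condition $d-q-3>0$ is precisely what guarantees that these second moments are finite.

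First I would establish the structural form of the tensor by an invariance argument. Reducing to $\Sigma=I_q$ via $S=\Sigma^{1/2}W\Sigma^{1/2}$ with $W\sim W_q(d,I_q)$, the law of $W$ is invariant under $W\mapsto GWG^{\top}$ for orthogonal $G$, so ${\rm E}[(W^{-1})_{ij}(W^{-1})_{kl}]$ is an orthogonally invariant four-tensor and hence a linear combination of $\delta_{ij}\delta_{kl}$, $\delta_{ik}\delta_{jl}$ and $\delta_{il}\delta_{jk}$. Using the symmetry $V=V^{\top}$ and transforming back through $V=\Sigma^{-1/2}W^{-1}\Sigma^{-1/2}$, this forces
\begin{align*}
	{\rm E}[V_{ij}V_{kl}] = \alpha\,\Psi_{ij}\Psi_{kl} + \beta\,(\Psi_{ik}\Psi_{jl}+\Psi_{il}\Psi_{jk})
\end{align*}
for scalars $\alpha,\beta$ depending only on $d$ and $q$.

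Next I would determine $\alpha$ and $\beta$. Contracting the displayed tensor against an arbitrary symmetric $A$ gives
\begin{align*}
	{\rm E}[S^{-1}AS^{-1}] = \beta\,{\rm tr}(\Psi A)\,\Psi + (\alpha+\beta)\,\Psi A\Psi,
\end{align*}
and comparing with Lemma~\ref{lem_gupta} yields $\beta=\frac{1}{(d-q)(d-q-1)(d-q-3)}$ and $\alpha+\beta=\frac{1}{(d-q)(d-q-3)}$, whence $\alpha=\frac{d-q-2}{(d-q)(d-q-1)(d-q-3)}$. Substituting into the contraction $\sum_a{\rm E}[V_{aa}V_{ij}]=\alpha\,{\rm tr}(\Psi)\Psi_{ij}+2\beta\,(\Psi^2)_{ij}$ produces
\begin{align*}
	{\rm E}[{\rm tr}(S^{-1})S^{-1}] = \alpha\,{\rm tr}(\Sigma^{-1})\,\Sigma^{-1} + 2\beta\,\Sigma^{-2},
\end{align*}
which is exactly the claimed identity; setting $\Sigma=I_q$, so that ${\rm tr}(\Psi)=q$ and $\Psi^2=I_q$, immediately gives the stated special case.

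The main obstacle I anticipate is the passage from the $A$-contracted moment in Lemma~\ref{lem_gupta} to the full tensor: one must argue that knowledge of ${\rm E}[S^{-1}AS^{-1}]$ for all symmetric $A$, together with the symmetry $V=V^{\top}$, determines every coefficient of ${\rm E}[V_{ij}V_{kl}]$. The orthogonal-invariance classification above is what makes this rigorous, reducing the problem to solving two scalar equations for $\alpha,\beta$; everything after that is routine linear algebra. If one prefers to bypass the invariance argument entirely, the same two coefficients $\alpha,\beta$ can instead be quoted directly from the standard second-moment formula for the inverse Wishart distribution, and the final contraction carried out verbatim.
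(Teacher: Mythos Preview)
Your argument is correct. The paper itself provides no proof of this lemma; it simply quotes the result from \cite[Theorem~3.3.17 (ii)]{Gupta}. Your derivation via the orthogonal-invariance structure of the second-moment tensor of $S^{-1}$, combined with Lemma~\ref{lem_gupta} to identify the two scalar coefficients, is a clean and self-contained route that recovers the cited identity exactly. One small point worth making explicit: Lemma~\ref{lem_gupta} is stated only for positive semidefinite $A$, but since the positive semidefinite cone spans the space of symmetric matrices and both sides of your comparison are linear in $A$, the matching of the coefficients $\alpha+\beta$ and $\beta$ is legitimate. After that, the contraction $\sum_a {\rm E}[V_{aa}V_{ij}]=\alpha\,{\rm tr}(\Psi)\Psi_{ij}+2\beta(\Psi^2)_{ij}$ is immediate and gives the stated formula.
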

				
				\begin{lemma}\cite[Theorem~3.3.17 (iii)]{Gupta}\label{lem_gupta1}
					If $d-q-1>0$ and $S \sim W_q(d,\Sigma)$, then
					\begin{align*}
						{\rm E} [{\rm tr} (S^{-1}) S] = \frac{d}{d-q-1} {\rm tr} (\Sigma^{-1}) \Sigma - \frac{2}{d-q-1} I_q.
					\end{align*}
					In particular, when $\Sigma=I_q$,
					\begin{align*}
						{\rm E} [{\rm tr} (S^{-1}) S] = \frac{dq-2}{d-q-1} I_q.
					\end{align*}
				\end{lemma}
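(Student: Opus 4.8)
The plan is to reduce the identity to the known first inverse moment $E[S^{-1}]$ (Lemma~\ref{lem_winv}) by combining the Gaussian representation of a Wishart matrix with Stein's identity. First I would write $S = \sum_{\alpha=1}^{d} z_\alpha z_\alpha^{\top}$ with $z_1, \dots, z_d \sim {\rm N}_q(0, \Sigma)$ independent, which is the standard construction of $S \sim W_q(d, \Sigma)$. Since $S$ is a symmetric function of the $z_\alpha$, exchangeability gives that $E[{\rm tr}(S^{-1}) z_\alpha z_\alpha^{\top}]$ does not depend on $\alpha$, so that ${\rm E}[{\rm tr}(S^{-1}) S] = d \, {\rm E}[{\rm tr}(S^{-1}) z_1 z_1^{\top}]$.

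Next I would evaluate ${\rm E}[{\rm tr}(S^{-1}) z_1 z_1^{\top}]$ entrywise by applying the Gaussian Stein identity to the coordinate $(z_1)_i$, differentiating the scalar function $f(z_1) = {\rm tr}(S^{-1}) (z_1)_j$. The key ingredient is the derivative $\partial\, {\rm tr}(S^{-1}) / \partial (z_1)_k = -2 (S^{-2} z_1)_k$, which follows from $\partial S_{ab}/\partial (z_1)_k = \delta_{ak}(z_1)_b + \delta_{bk}(z_1)_a$ together with $\partial\, {\rm tr}(S^{-1}) / \partial S_{ab} = -(S^{-2})_{ab}$ (the symmetry of $S^{-2}$ producing the factor $2$). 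Collecting the two terms of $\partial f / \partial (z_1)_k$ and writing the result in matrix form yields ${\rm E}[{\rm tr}(S^{-1}) z_1 z_1^{\top}] = \Sigma\, {\rm E}[{\rm tr}(S^{-1})] - 2 \Sigma\, {\rm E}[S^{-2} z_1 z_1^{\top}]$.

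Then I would eliminate the remaining $S^{-2}$ term by the same exchangeability argument: summing over $\alpha$ gives $d\, {\rm E}[S^{-2} z_1 z_1^{\top}] = {\rm E}[S^{-2} S] = {\rm E}[S^{-1}]$, hence ${\rm E}[S^{-2} z_1 z_1^{\top}] = {\rm E}[S^{-1}]/d$. Substituting this back, multiplying by $d$, and inserting ${\rm E}[S^{-1}] = \Sigma^{-1}/(d-q-1)$ from Lemma~\ref{lem_winv} (so that ${\rm E}[{\rm tr}(S^{-1})] = {\rm tr}(\Sigma^{-1})/(d-q-1)$), I obtain
\begin{align*}
	{\rm E}[{\rm tr}(S^{-1}) S] = \frac{d}{d-q-1} {\rm tr}(\Sigma^{-1}) \Sigma - \frac{2}{d-q-1} I_q,
\end{align*}
and specializing to $\Sigma = I_q$ (where ${\rm tr}(\Sigma^{-1}) = q$) gives the stated constant multiple $(dq-2)/(d-q-1)$ of $I_q$.

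The main obstacle will be the analytic justification of Stein's identity rather than the algebra: I must verify that $f(z_1) = {\rm tr}(S^{-1}) (z_1)_j$ is absolutely continuous in $z_1$ and that every expectation appearing is finite under the single hypothesis $d-q-1 > 0$, so that the integration by parts carries no boundary contribution. The needed integrability is supplied precisely by the finiteness of ${\rm E}[S^{-1}]$ for $d-q-1>0$; note that ${\rm E}[S^{-2} z_1 z_1^{\top}] = {\rm E}[S^{-1}]/d$ is finite under this same condition even though ${\rm E}[S^{-2}]$ on its own would require the stronger $d-q-3>0$. The only delicate point in the computation is the symmetric-matrix differentiation of ${\rm tr}(S^{-1})$, where the factor $2$ must be tracked correctly.
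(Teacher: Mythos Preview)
Your argument is correct. The paper does not supply its own proof of this lemma; it simply quotes the result from \cite[Theorem~3.3.17 (iii)]{Gupta}. Your approach therefore goes beyond what the paper does: you give a self-contained derivation using only Stein's identity (Lemma~\ref{lem_stein}) and the first inverse Wishart moment ${\rm E}[S^{-1}]$ (Lemma~\ref{lem_winv}), both already present in the appendix. The Gaussian representation $S=\sum_\alpha z_\alpha z_\alpha^\top$ together with the two exchangeability reductions --- first ${\rm E}[{\rm tr}(S^{-1})S]=d\,{\rm E}[{\rm tr}(S^{-1})z_1z_1^\top]$, then ${\rm E}[S^{-2}z_1z_1^\top]={\rm E}[S^{-1}]/d$ --- is a clean way to avoid any second inverse moment, so that only the hypothesis $d-q-1>0$ is needed. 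The regularity issue you flag is real but manageable: since $d-q-1>0$ forces $d-1\ge q+1$, the matrix $T=\sum_{\alpha\ge 2}z_\alpha z_\alpha^\top$ is almost surely positive definite, and the Sherman--Morrison identity $S^{-1}z_1=T^{-1}z_1/(1+z_1^\top T^{-1}z_1)$ gives, conditionally on $T$, bounds on $S^{-2}z_1z_1^\top$ that are polynomial in $\|z_1\|$ and hence integrable; the unconditional integrability then follows from the finiteness of ${\rm E}[{\rm tr}(S^{-1})]$ via the same exchangeability trick (the nonnegative scalars $z_\alpha^\top S^{-2}z_\alpha$ sum to ${\rm tr}(S^{-1})$).
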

				
				\begin{lemma}\cite{Styan}\label{lem_gupta2}
					If $d-q-1>0$, $S \sim W_q(d,\Sigma)$ and $A \in \mathbb{R}^{q \times q}$, then
					\begin{align*}
						{\rm E} [S A S^{-1}] = \frac{1}{d-q-1} (d \Sigma A \Sigma^{-1} - A^{\top} - {\rm tr} (A) I_q).
					\end{align*}
					In particular, when $\Sigma=I_q$ and $A^{\top}=A$,
					\begin{align*}
						{\rm E} [S A S^{-1}] = \frac{1}{d-q-1} ((d-1) A - {\rm tr} (A) I_q).
					\end{align*}
				\end{lemma}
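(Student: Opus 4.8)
The plan is to reduce the general Wishart statement to the standard case $\Sigma = I_q$ and then exploit orthogonal invariance. Writing $T = \Sigma^{-1/2} S \Sigma^{-1/2} \sim W_q(d, I_q)$, we have $S = \Sigma^{1/2} T \Sigma^{1/2}$ and $S^{-1} = \Sigma^{-1/2} T^{-1} \Sigma^{-1/2}$, so that $S A S^{-1} = \Sigma^{1/2} T \tilde{A} T^{-1} \Sigma^{-1/2}$ with $\tilde{A} = \Sigma^{1/2} A \Sigma^{-1/2}$. Taking expectations gives ${\rm E}[S A S^{-1}] = \Sigma^{1/2}\, {\rm E}[T \tilde{A} T^{-1}]\, \Sigma^{-1/2}$, and since ${\rm tr}(\tilde{A}) = {\rm tr}(A)$ and $\tilde{A}^{\top} = \Sigma^{-1/2} A^{\top} \Sigma^{1/2}$, once the identity is established for $T \sim W_q(d, I_q)$ the general formula follows by substituting and simplifying the three resulting terms back to $d\,\Sigma A \Sigma^{-1}$, $A^{\top}$, and ${\rm tr}(A) I_q$.

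For the standard case, the key observation is that $T_{ijkl} := {\rm E}[S_{ij}(S^{-1})_{kl}]$ (now with $S \sim W_q(d, I_q)$) is an isotropic fourth-order tensor. Indeed, for any orthogonal $O$ we have $O S O^{\top} \overset{d}{=} S$ and $(O S O^{\top})^{-1} = O S^{-1} O^{\top}$, so $T_{ijkl}$ is invariant under the simultaneous action of $O(q)$ on all four indices. By the classification of rank-$4$ isotropic tensors of $O(q)$, it is a linear combination of $\delta_{ij}\delta_{kl}$, $\delta_{ik}\delta_{jl}$, and $\delta_{il}\delta_{jk}$; the symmetry of $S$ in $(i,j)$ and of $S^{-1}$ in $(k,l)$ forces the coefficients of the last two to coincide, so
\begin{align*}
	{\rm E}[S_{ij}(S^{-1})_{kl}] = a\,\delta_{ij}\delta_{kl} + b\,(\delta_{ik}\delta_{jl} + \delta_{il}\delta_{jk})
\end{align*}
for scalars $a, b$ depending only on $d, q$.

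I would then pin down $a$ and $b$ by two contractions. Setting $j = k$ and summing uses $S S^{-1} = I_q$ to give ${\rm E}[(S S^{-1})_{il}] = \delta_{il}$, hence $a + (q+1) b = 1$. Setting $l = k$ and summing gives ${\rm E}[{\rm tr}(S^{-1}) S]$, which equals $\frac{dq-2}{d-q-1} I_q$ by Lemma~\ref{lem_gupta1} (with $\Sigma = I_q$), hence $aq + 2b = \frac{dq-2}{d-q-1}$. Solving the two linear equations yields $a = d/(d-q-1)$ and $b = -1/(d-q-1)$. Substituting into ${\rm E}[(SAS^{-1})_{il}] = \sum_{j,k} A_{jk}\, {\rm E}[S_{ij}(S^{-1})_{kl}]$ and contracting the deltas produces exactly $\frac{1}{d-q-1}(d A - A^{\top} - {\rm tr}(A) I_q)$, the standard-case identity; undoing the whitening completes the proof, and the displayed special case is the immediate consequence of setting $\Sigma = I_q$ and $A^{\top} = A$.

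The step I expect to require the most care is the isotropic-tensor reduction: one must correctly invoke orthogonal invariance of $W_q(d, I_q)$ together with the symmetry constraints on $(i,j)$ and $(k,l)$ to obtain the two-parameter form, and note that the coefficient-solving implicitly divides by $(q-1)(q+2)$, so the argument as stated applies for $q \geq 2$, while $q = 1$ is trivial since then $S A S^{-1} = A$. The remaining work—the two contractions feeding into Lemma~\ref{lem_gupta1} and the whitening bookkeeping—is routine.
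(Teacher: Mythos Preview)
Your proposal is correct. The whitening reduction to $\Sigma=I_q$ is straightforward, and the isotropic-tensor argument for the standard case is sound: orthogonal invariance of $W_q(d,I_q)$ plus the $(i,j)$ and $(k,l)$ symmetries indeed force the two-parameter form, your two contractions are computed correctly, and the resulting linear system gives $a=d/(d-q-1)$, $b=-1/(d-q-1)$ as claimed. The bookkeeping in both the final contraction against $A$ and the un-whitening step checks out, and your handling of the degenerate case $q=1$ is appropriate.

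As for comparison with the paper: there is nothing to compare. The paper does not prove this lemma at all---it is simply quoted with a citation to Styan (1989), as the \verb|\cite{Styan}| in the lemma heading indicates. So your argument is not an alternative to the paper's proof but rather a self-contained verification of a result the paper takes as given. The route you chose (invariant-tensor classification plus two moment constraints) is a clean and standard way to obtain such Wishart identities; your reliance on Lemma~\ref{lem_gupta1} for the second contraction is legitimate since that lemma is cited independently from \cite{Gupta} and does not depend on the present statement.
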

				
				%By using the above lemmas, we obtain the following results.
				
				\begin{lemma}\label{lem_exp0}
					If $n-p-q-1>0$ and $Z \sim {\rm N}_{p,q}(\bar{Z},I_p,I_q)$, then
					\begin{align}
						&{\rm E} [{\rm tr} ((Z-\bar{Z})^{\top} (Z-\bar{Z})) {\rm tr} ((Z^{\top} Z)^{-1})] \nonumber \\
						=& pq {\rm E} [ {\rm tr} (  (Z^{\top} Z)^{-1}  ) ] - 2 (p-q-2) {\rm E} [ {\rm tr} ( (Z^{\top} Z)^{-2} ) ] +2 {\rm E} [ ({\rm tr} ((Z^{\top} Z)^{-1}) )^2 ]. \label{expectation0}
					\end{align}
				\end{lemma}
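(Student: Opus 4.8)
The plan is to apply Stein's identity (Lemma~\ref{lem_stein}) twice, each time ``peeling off'' a factor of $Z-\bar Z$. First I would note that, by the location shift $Z=\bar Z+W$ with $W\sim{\rm N}_{p,q}(O,I_p,I_q)$ and the chain rule, Lemma~\ref{lem_stein} extends to the non-central case in the form
\begin{align*}
	{\rm E} [{\rm tr} ((Z-\bar Z)^{\top} h(Z))] = {\rm E} \left[ \sum_{i,j} \frac{\partial h_{ij}}{\partial Z_{ij}} (Z) \right]
\end{align*}
for absolutely continuous vector fields $h$ with the relevant moments finite. Applying this with $h(Z)=(Z-\bar Z)\,{\rm tr}((Z^{\top}Z)^{-1})$, the left-hand side is exactly the quantity on the left of \eqref{expectation0}, so it remains to compute the divergence on the right.

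Next I would differentiate $h_{ij}(Z)=(Z_{ij}-\bar Z_{ij})\,{\rm tr}((Z^{\top}Z)^{-1})$ by the product rule. The term in which $\partial/\partial Z_{ij}$ hits $(Z_{ij}-\bar Z_{ij})$ contributes $\sum_{i,j}{\rm tr}((Z^{\top}Z)^{-1})=pq\,{\rm tr}((Z^{\top}Z)^{-1})$. The term in which it hits the trace factor is evaluated using Lemma~\ref{lem_diff2} with $S=I_q$, giving $\partial {\rm tr}((Z^{\top}Z)^{-1})/\partial Z_{ij}=-2(Z(Z^{\top}Z)^{-2})_{ij}$, hence a contribution of $-2\sum_{i,j}(Z_{ij}-\bar Z_{ij})(Z(Z^{\top}Z)^{-2})_{ij}=-2\,{\rm tr}((Z-\bar Z)^{\top}Z(Z^{\top}Z)^{-2})$. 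So far,
\begin{align*}
	{\rm E} [{\rm tr} ((Z-\bar Z)^{\top} (Z-\bar Z)) {\rm tr} ((Z^{\top} Z)^{-1})] = pq\,{\rm E} [{\rm tr} ((Z^{\top} Z)^{-1})] - 2\,{\rm E} [{\rm tr} ((Z-\bar Z)^{\top} Z (Z^{\top} Z)^{-2})].
\end{align*}

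Then I would apply Stein's identity a second time, now with $h(Z)=Z(Z^{\top}Z)^{-2}=Z(Z^{\top}Z)^{-1}I_q(Z^{\top}Z)^{-1}$, to evaluate the last expectation. By Lemma~\ref{lem_diff3} with $A=B=I_q$, its divergence is $(p-q-2)\,{\rm tr}((Z^{\top}Z)^{-2})-({\rm tr}((Z^{\top}Z)^{-1}))^2$. Substituting this back and simplifying yields \eqref{expectation0}.

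The only genuine subtlety — the main obstacle — is checking the hypotheses needed for the two applications of Stein's identity: the vector fields are smooth on the full-measure set where $Z^{\top}Z\succ O$ (which holds almost surely since $p\ge q$), and the required moments of $(Z^{\top}Z)^{-1}$ and $(Z^{\top}Z)^{-2}$ are finite and the boundary terms vanish, which holds once $p$ is large enough relative to $q$ (this is guaranteed in all the theorems invoking the lemma, e.g. under $p\ge 2q+3$). Everything after that is routine bookkeeping with Lemmas~\ref{lem_diff2} and~\ref{lem_diff3}.
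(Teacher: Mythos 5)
Your proposal is correct and follows essentially the same route as the paper's proof: one application of Stein's identity (Lemma~\ref{lem_stein}) combined with Lemma~\ref{lem_diff2} to reduce the left-hand side to $pq\,{\rm E}[{\rm tr}((Z^{\top}Z)^{-1})]-2\,{\rm E}[{\rm tr}((Z-\bar Z)^{\top}Z(Z^{\top}Z)^{-2})]$, followed by a second application together with Lemma~\ref{lem_diff3} (with $A=B=I_q$) to evaluate the remaining term. Your added remarks on integrability and the almost-sure invertibility of $Z^{\top}Z$ are sensible but do not change the argument.
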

				\begin{proof}
					From Lemma~\ref{lem_stein} and Lemma~\ref{lem_diff2},
					\begin{align}
						&{\rm E} [{\rm tr} ((Z-\bar{Z})^{\top} (Z-\bar{Z})) {\rm tr} ((Z^{\top} Z)^{-1})] \nonumber \\
						=& {\rm E} \left[ \sum_{i,j} \frac{\partial}{\partial Z_{ij}} ( (Z-\bar{Z}))_{ij} {\rm tr} ((Z^{\top} Z)^{-1}) ) \right] \nonumber \\
						=& {\rm E} \left[ \sum_{i,j} ( {\rm tr} ((Z^{\top} Z)^{-1}) - 2 ((Z-\bar{Z}))_{ij} (Z(Z^{\top} Z)^{-1} (Z^{\top} Z)^{-1})_{ij} ) \right] \nonumber \\
						=& pq {\rm E} [ {\rm tr} ((Z^{\top} Z)^{-1}) ] -2 {\rm E} [ {\rm tr} ( (Z-\bar{Z})^{\top} Z(Z^{\top} Z)^{-1} (Z^{\top} Z)^{-1} ) ]. \label{exp01}
					\end{align}
					From Lemma~\ref{lem_stein} and Lemma~\ref{lem_diff3},
					\begin{align}
						{\rm E} [ {\rm tr} ( (Z-\bar{Z})^{\top} Z(Z^{\top} Z)^{-1} (Z^{\top} Z)^{-1} ) ] &= {\rm E} \left[ \sum_{i,j} \frac{\partial}{\partial Z_{ij}} (Z(Z^{\top} Z)^{-1} (Z^{\top} Z)^{-1} )_{ij} \right] \nonumber \\
						&= {\rm E} [ (p-q-2) {\rm tr} ((Z^{\top} Z)^{-2}) - ({\rm tr} ((Z^{\top} Z)^{-1}))^2 ]. \label{exp03}
					\end{align}
					Substituting \eqref{exp03} into \eqref{exp01}, we obtain \eqref{expectation0}.
				\end{proof}
				
				\begin{lemma}\label{lem_exp}
					If $n-p-q-1>0$, $Z \sim {\rm N}_{p,q}(\bar{Z},I_p,I_q)$ and $S \sim W_q ( n-p, (1/n) I_q )$, then
					\begin{align}
						&{\rm E} [{\rm tr} ((Z-\bar{Z})^{\top} (Z-\bar{Z}) S^{-1}) {\rm tr} (S (Z^{\top} Z)^{-1})] \nonumber \\
						=& p \frac{(n-p)q-2}{n-p-q-1} {\rm E} [ {\rm tr} (  (Z^{\top} Z)^{-1}  ) ] - 2 \frac{(n-p-1)(p-q-2)+2}{n-p-q-1} {\rm E} [ {\rm tr} ( (Z^{\top} Z)^{-2} ) ] \nonumber \\
						& \quad +2 \frac{n-q-2}{n-p-q-1} {\rm E} [ ({\rm tr} ((Z^{\top} Z)^{-1}) )^2 ]. \label{expectation}
					\end{align}
				\end{lemma}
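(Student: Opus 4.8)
The plan is to follow the same route as the proof of Lemma~\ref{lem_exp0}: apply Stein's identity (Lemma~\ref{lem_stein}) to the Gaussian factor $Z$ twice, now carrying the independent Wishart matrix $S$ through the computation as a frozen constant, and integrate $S$ out only at the end. First I would condition on $S$ (using independence of $Z$ and $S$) and write the integrand on the left of \eqref{expectation} as ${\rm tr}((Z-\bar Z)^{\top}g(Z))$ with $g(Z)=(Z-\bar Z)S^{-1}{\rm tr}(S(Z^{\top}Z)^{-1})$. Since $\partial((Z-\bar Z)S^{-1})_{ij}/\partial Z_{ij}=(S^{-1})_{jj}$ and $\partial\,{\rm tr}(S(Z^{\top}Z)^{-1})/\partial Z_{ij}=-2(Z(Z^{\top}Z)^{-1}S(Z^{\top}Z)^{-1})_{ij}$ by Lemma~\ref{lem_diff2}, Lemma~\ref{lem_stein} gives
\begin{align*}
	{\rm E}[{\rm tr}((Z-\bar Z)^{\top}(Z-\bar Z)S^{-1}){\rm tr}(S(Z^{\top}Z)^{-1})] = p\,{\rm E}[{\rm tr}(S^{-1}){\rm tr}(S(Z^{\top}Z)^{-1})] - 2\,{\rm E}[{\rm tr}((Z-\bar Z)^{\top}Z(Z^{\top}Z)^{-1}S(Z^{\top}Z)^{-1}S^{-1})].
\end{align*}

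The remainder term still involves $Z-\bar Z$, so I would apply Stein's identity a second time with $g_2(Z)=Z(Z^{\top}Z)^{-1}S(Z^{\top}Z)^{-1}S^{-1}$, which is exactly of the form $Z(Z^{\top}Z)^{-1}A(Z^{\top}Z)^{-1}B$ treated in Lemma~\ref{lem_diff3} with $A=S$ and $B=S^{-1}$. This yields
\begin{align*}
	{\rm E}[{\rm tr}((Z-\bar Z)^{\top}Z(Z^{\top}Z)^{-1}S(Z^{\top}Z)^{-1}S^{-1})] = (p-q-2)\,{\rm E}[{\rm tr}((Z^{\top}Z)^{-1}S(Z^{\top}Z)^{-1}S^{-1})] - {\rm E}[{\rm tr}(S(Z^{\top}Z)^{-1}){\rm tr}((Z^{\top}Z)^{-1}S^{-1})],
\end{align*}
so that the left-hand side of \eqref{expectation} has been reduced to a linear combination of ${\rm E}[{\rm tr}(S^{-1}){\rm tr}(S(Z^{\top}Z)^{-1})]$, ${\rm E}[{\rm tr}((Z^{\top}Z)^{-1}S(Z^{\top}Z)^{-1}S^{-1})]$ and ${\rm E}[{\rm tr}(S(Z^{\top}Z)^{-1}){\rm tr}((Z^{\top}Z)^{-1}S^{-1})]$.

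Next I would integrate out $S\sim W_q(n-p,(1/n)I_q)$ in each of these, conditioning on $Z$; because the scale matrix $(1/n)I_q$ is a scalar multiple of the identity, the Wishart formulas all collapse to expressions in $(Z^{\top}Z)^{-1}$ alone. By Lemma~\ref{lem_gupta1}, ${\rm E}[{\rm tr}(S^{-1})S]=\frac{(n-p)q-2}{n-p-q-1}I_q$, so the first term equals $\frac{(n-p)q-2}{n-p-q-1}\,{\rm E}[{\rm tr}((Z^{\top}Z)^{-1})]$; by Styan's formula (Lemma~\ref{lem_gupta2}), ${\rm E}[SAS^{-1}]=\frac{1}{n-p-q-1}((n-p-1)A-{\rm tr}(A)I_q)$ for symmetric $A$, so after a cyclic rearrangement the second term equals $\frac{1}{n-p-q-1}\big((n-p-1)\,{\rm E}[{\rm tr}((Z^{\top}Z)^{-2})]-{\rm E}[({\rm tr}((Z^{\top}Z)^{-1}))^2]\big)$. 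For the third term I would use the mixed Wishart moment ${\rm E}[{\rm tr}(AS){\rm tr}(BS^{-1})]=\frac{1}{n-p-q-1}\big((n-p)\,{\rm tr}(A){\rm tr}(B)-2\,{\rm tr}(AB)\big)$ for symmetric $A,B$; this is not among the listed lemmas, but it follows from Lemma~\ref{lem_gupta2} applied to the rank-one matrices $e_ie_j^{\top}$ (equivalently, from the isotropy of the fourth-order moment tensor of $(S,S^{-1})$ under orthogonal conjugation), and with $A=B=(Z^{\top}Z)^{-1}$ it gives $\frac{1}{n-p-q-1}\big((n-p)\,{\rm E}[({\rm tr}((Z^{\top}Z)^{-1}))^2]-2\,{\rm E}[{\rm tr}((Z^{\top}Z)^{-2})]\big)$. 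Substituting these three evaluations back and collecting the coefficients of ${\rm E}[{\rm tr}((Z^{\top}Z)^{-1})]$, ${\rm E}[{\rm tr}((Z^{\top}Z)^{-2})]$ and ${\rm E}[({\rm tr}((Z^{\top}Z)^{-1}))^2]$, using the elementary simplifications $-2(p-q-2)(n-p-1)-4=-2((n-p-1)(p-q-2)+2)$ and $2(p-q-2)+2(n-p)=2(n-q-2)$, produces \eqref{expectation}.

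I expect the difficulty to be bookkeeping rather than conceptual. The two points that need care are: choosing the ``direction'' of each Stein application so that the remainder is genuinely of the form required by Lemma~\ref{lem_diff3} (with $S$ and $S^{-1}$ held fixed throughout the $Z$-differentiation), and supplying the mixed Wishart moment ${\rm E}[{\rm tr}(AS){\rm tr}(BS^{-1})]$, which should either be recorded as a short auxiliary lemma or derived on the spot from Styan's formula. Keeping track of the factor $1/n$ in the scale matrix of $S$ is a minor but easy-to-miss point.
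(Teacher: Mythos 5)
Your proposal is correct and follows essentially the same route as the paper: two applications of Stein's identity via Lemma~\ref{lem_diff2} and Lemma~\ref{lem_diff3} (with $A=S$, $B=S^{-1}$), followed by the Wishart moment formulas, and your coefficient bookkeeping matches \eqref{expectation} exactly. The only cosmetic difference is in evaluating ${\rm E}[{\rm tr}(S(Z^{\top}Z)^{-1})\,{\rm tr}((Z^{\top}Z)^{-1}S^{-1})]$: you derive the mixed moment ${\rm E}[{\rm tr}(AS){\rm tr}(BS^{-1})]$ entrywise from Styan's formula (which is valid), whereas the paper reaches the same value via the change of variables $\tilde S=(Z^{\top}Z)^{1/2}S(Z^{\top}Z)^{1/2}$ and Lemma~\ref{lem_gupta1} with a general scale matrix.
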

				\begin{proof}
					From Lemma~\ref{lem_stein} and Lemma~\ref{lem_diff2},
					\begin{align}
						&{\rm E} [{\rm tr} ((Z-\bar{Z})^{\top} (Z-\bar{Z}) S^{-1}) {\rm tr} (S (Z^{\top} Z)^{-1})] \nonumber \\
						=& {\rm E} \left[ \sum_{i,j} \frac{\partial}{\partial Z_{ij}} ( (Z-\bar{Z}) S^{-1})_{ij} {\rm tr} (S (Z^{\top} Z)^{-1}) ) \right] \nonumber \\
						=& {\rm E} \left[ \sum_{i,j} ( (S^{-1})_{jj} {\rm tr} (S (Z^{\top} Z)^{-1}) - 2 ((Z-\bar{Z}) S^{-1})_{ij} (Z(Z^{\top} Z)^{-1} S (Z^{\top} Z)^{-1})_{ij} ) \right] \nonumber \\
						=& p {\rm E} [ {\rm tr} (S^{-1}) {\rm tr} (S (Z^{\top} Z)^{-1}) ] -2 {\rm E} [ {\rm tr} ( S^{-1} (Z-\bar{Z})^{\top} Z(Z^{\top} Z)^{-1} S (Z^{\top} Z)^{-1} ) ]. \label{exp1}
					\end{align}
					Then, from the linearity of expectation and Lemma~\ref{lem_gupta1},
					\begin{align}
						{\rm E} [ {\rm tr} (S^{-1}) {\rm tr} (S (Z^{\top} Z)^{-1}) ] &= {\rm tr} ( {\rm E} [ {\rm tr} (S^{-1}) S \cdot (Z^{\top} Z)^{-1} ] ) \nonumber \\
						&= {\rm tr} ( {\rm E} [ {\rm tr} (S^{-1}) S  ] \cdot {\rm E} [  (Z^{\top} Z)^{-1} ] ) \nonumber \\
						&= \frac{(n-p)q-2}{n-p-q-1} {\rm E} [ {\rm tr} (  (Z^{\top} Z)^{-1}  ) ]. \label{exp2}
					\end{align}
					Also, from Lemma~\ref{lem_stein} and Lemma~\ref{lem_diff3},
					\begin{align}
						&{\rm E} [ {\rm tr} ( S^{-1} (Z-\bar{Z})^{\top} Z(Z^{\top} Z)^{-1} S (Z^{\top} Z)^{-1} ) ] \nonumber \\
						=& {\rm E} [ {\rm tr} ( (Z-\bar{Z})^{\top} Z(Z^{\top} Z)^{-1} S (Z^{\top} Z)^{-1} S^{-1} ) ] \nonumber \\
						=& {\rm E} \left[ \sum_{i,j} \frac{\partial}{\partial Z_{ij}} (Z(Z^{\top} Z)^{-1} S (Z^{\top} Z)^{-1} S^{-1} )_{ij} \right] \nonumber \\
						=& {\rm E} [ (p-q-2) {\rm tr} ((Z^{\top} Z)^{-1} S (Z^{\top} Z)^{-1} S^{-1}) - {\rm tr} ((Z^{\top} Z)^{-1} S) {\rm tr} ((Z^{\top} Z)^{-1} S^{-1}) ]. \label{exp3}
					\end{align}
					Now, from Lemma~\ref{lem_gupta2},
					\begin{align}
						&{\rm E} [ {\rm tr} ((Z^{\top} Z)^{-1} S (Z^{\top} Z)^{-1} S^{-1}) ] \nonumber \\
						=& {\rm E} [ {\rm tr} ( (Z^{\top} Z)^{-1} \cdot {\rm E} [  S (Z^{\top} Z)^{-1} S^{-1} \mid Z ] ) ] \nonumber \\
						=& {\rm E} \left[ {\rm tr} \left( (Z^{\top} Z)^{-1} \cdot \frac{1}{n-p-q-1} ((n-p-1) (Z^{\top} Z)^{-1} - {\rm tr} ((Z^{\top} Z)^{-1})I_q ) \right) \right] \nonumber \\
						=& \frac{n-p-1}{n-p-q-1} {\rm E} [ {\rm tr} ( (Z^{\top} Z)^{-2} ) ] - \frac{1}{n-p-q-1} {\rm E} [ ({\rm tr} ((Z^{\top} Z)^{-1}) )^2 ]. \label{exp4}
					\end{align}
					Also, by putting $\tilde{S} = ((Z^{\top} Z)^{1/2} S ((Z^{\top} Z)^{1/2} \sim W_q(n-p,Z^{\top} Z)$ and using Lemma~\ref{lem_gupta1},
					\begin{align}
						&{\rm E} [ {\rm tr} ((Z^{\top} Z)^{-1} S) {\rm tr} ((Z^{\top} Z)^{-1} S^{-1}) ] \nonumber \\
						=& {\rm E} [ {\rm tr} ((Z^{\top} Z)^{-2} \tilde{S}) {\rm tr} (\tilde{S}^{-1}) ] \nonumber \\
						=& {\rm E} [ {\rm tr} ((Z^{\top} Z)^{-2} \cdot {\rm E} [ {\rm tr} (\tilde{S}^{-1}) \tilde{S} \mid Z ] ) ] \nonumber \\
						=& {\rm E} \left[ {\rm tr} ((Z^{\top} Z)^{-2} \left( \frac{n-p}{n-p-q-1} {\rm tr} ((Z^{\top} Z)^{-1}) (Z^{\top} Z) - \frac{2}{n-p-q-1} I_q \right) \right] \nonumber \\
						=& \frac{n-p}{n-p-q-1} {\rm E} [ ({\rm tr} ((Z^{\top} Z)^{-1}))^2 ] - \frac{2}{n-p-q-1} {\rm E} [ {\rm tr} ((Z^{\top} Z)^{-2}) ]. \label{exp5}
					\end{align}
					Thus, by substituting \eqref{exp4} and \eqref{exp5} into \eqref{exp3},
					\begin{align}
						&{\rm E} [ {\rm tr} ( S^{-1} (Z-\bar{Z})^{\top} Z(Z^{\top} Z)^{-1} S (Z^{\top} Z)^{-1} ) ] \nonumber \\
						=& (p-q-2) \left( \frac{n-p-1}{n-p-q-1} {\rm E} [ {\rm tr} ( (Z^{\top} Z)^{-2} ) ] - \frac{1}{n-p-q-1} {\rm E} [ ({\rm tr} ((Z^{\top} Z)^{-1}) )^2 ] \right) \nonumber \\
						& \quad  - \left( \frac{n-p}{n-p-q-1} {\rm E} [ ({\rm tr} ((Z^{\top} Z)^{-1}))^2 ] - \frac{2}{n-p-q-1} {\rm E} [ {\rm tr} ((Z^{\top} Z)^{-2}) ] \right) \nonumber \\
						=& \frac{(n-p-1)(p-q-2)+2}{n-p-q-1} {\rm E} [ {\rm tr} ( (Z^{\top} Z)^{-2} ) ] - \frac{n-q-2}{n-p-q-1} {\rm E} [ ({\rm tr} ((Z^{\top} Z)^{-1}) )^2 ].	 \label{exp6}
					\end{align}
					Substituting \eqref{exp2} and \eqref{exp6} into \eqref{exp1}, we obtain \eqref{expectation}.
				\end{proof}

\end{document}